 \newtheorem{theorem}{Theorem}[section]
 \newtheorem{lemma}[theorem]{Lemma}
 \newtheorem{prop}[theorem]{Proposition}
 \newtheorem{remark}[theorem]{Remark}
 \newtheorem{definition}[theorem]{Definition}
 \newtheorem{assumption}[theorem]{Assumption}
\newcommand{\tu}{\tilde{u}}
\newcommand{\bydef}{\stackrel{\mbox{\tiny\textnormal{\raisebox{0ex}[0ex][0ex]{def}}}}{=}}
\newcommand{\R}{\mathbb{R}}
\newcommand{\om}{{\Omega_0}}
\newcommand{\mL}{\mathbb{L}}
\newcommand{\iL}{\mathbb{L}^{-1}}
\newcommand{\A}{\mathbb{A}}
\newcommand{\G}{\mathbb{G}}
\newcommand{\tU}{\tilde{U}_0}
\newcommand{\B}{\mathbb{B}}
\newcommand{\F}{\mathbb{F}}
\newcommand{\oF}{\overline{\mathbb{F}}}
\newcommand{\oL}{\overline{\mathbb{L}}}
\newcommand{\oA}{\overline{\mathbb{A}}}
\newcommand{\oB}{\overline{\mathbb{B}}}
\newcommand{\mG}{\mathbb{G}}
\newcommand{\cha}{\mathbb{1}_{\Omega_0}}
\newcommand{\out}{\mathbb{1}_{\mathbb{R}^m\setminus\Omega_0}}
\newcommand{\gdag}{{\gamma^\dagger}}
\newcommand{\og}{\overline{\gamma}}
\newcommand{\ogd}{\overline{\gamma}^\dagger}
\newcommand{\oGG}{\overline{\Gamma}}
\newcommand{\oGGD}{\overline{\Gamma}^\dagger}
\begin{document}
\title{Rigorous computation of solutions of semi-linear PDEs on unbounded domains via spectral methods}
\author{
Matthieu Cadiot
\footnote{McGill University, Department of Mathematics and Statistics, 805 Sherbrooke Street West, Montreal, QC, H3A 0B9, Canada. {\tt matthieu.cadiot@mail.mcgill.ca}}
\and
Jean-Philippe Lessard \footnote{McGill University, Department of Mathematics and Statistics, 805 Sherbrooke Street West, Montreal, QC, H3A 0B9, Canada. {\tt jp.lessard@mcgill.ca}}
\and
Jean-Christophe Nave \footnote{McGill University, Department of Mathematics and Statistics, 805 Sherbrooke Street West, Montreal, QC, H3A 0B9, Canada. {\tt jean-christophe.nave@mcgill.ca}}
}

%\date{}

\maketitle

\begin{abstract}
In this article we present a general method to rigorously prove existence
of strong solutions to a large class of autonomous semi-linear PDEs in a Hilbert
space $H^{l}\subset H^{s}(\mathbb{R}^{m})$ ($s\geq1$) via computer-assisted
proofs. Our approach is fully spectral and uses Fourier series to
approximate functions in $H^{l}$ as well as bounded linear operators
from $L^{2}$ to $H^{l}$. In particular, we construct approximate inverses of differential
operators via Fourier series approximations. Combining
this construction with a Newton-Kantorovich approach, we develop a
numerical method to prove existence of strong solutions. To
do so, we introduce a finite-dimensional trace theorem from which
we build smooth functions with support on a hypercube. The method
is then generalized to systems of PDEs with extra equations/parameters
such as eigenvalue problems. As an application, we prove the existence
of a traveling wave (soliton) in the Kawahara equation in $H^{4}(\mathbb{R})$
as well as eigenpairs of the linearization about the soliton. These
results allow us to prove the stability of the aforementioned traveling
wave. 
\end{abstract}
\begin{center}
\textbf{\small{}Key words.}{\small{} Semi-linear PDEs, Unbounded domains,
Computer-assisted proof, Fourier analysis, Newton-Kantorovich theorem,
Stability of traveling waves} 
\par\end{center}

\section{Introduction}

In this paper, we present a novel Fourier spectral method, to prove
(constructively) existence of strong solutions for a wide class of
autonomous semi-linear Partial Differential Equations (PDEs) defined on $\mathbb{R}^{m}$,
possibly with extra equations, such as eigenvalue problems. More specifically, this paper focuses on solutions vanishing at infinity (sometimes referred as localized states). In particular,
we develop tools to build an approximate inverse for the linearization
of semi-linear PDE operators around an approximate solution. Moreover,
the precision of the approximation can be verified rigorously on the
computer. We present the method theoretically in a general framework
and we provide tools for the numerical computations of the required
bounds. More specifically, given a domain $\Omega \subset \R^m$, the autonomous semi-linear PDEs on $\Omega$ we consider may
be written as 
\vspace{-.2cm}
\begin{equation}
\mathbb{F}(u) \bydef \mathbb{L}u + \mathbb{G}(u) =0,\label{eq:general_F=00003D0_1}
\vspace{-.2cm}
\end{equation}
 where $\mathbb{F}:X\to Y$
is smooth and defined between Hilbert spaces $X$ and $Y$ with norms $\|\cdot\|_{X}$,
$\|\cdot\|_{Y}$, respectively. %In this paper we will focus on the case $\Omega=\mathbb{R}^{m}$.
$\mathbb{L}$ is defined to be the linear part of $\mathbb{F}$ and $\mathbb{G}$ is the  nonlinear part.
Moreover, we assume that $X$
is a closed subset of a Sobolev space $H^{s}(\Omega)$ and $Y$ is
a closed subset of $L^{2}(\Omega)$ (or $H^{s}(\Omega)$ in the weak
formulation setting). Moreover, $X$ and $Y$ take into account the
boundary conditions and any possible symmetries the problem may possess.
This set up is natural if one is looking for strong solutions of a
given PDE. The use of computer-assisted proofs is motivated by the
fact that constructively proving existence of solutions of semi-linear
PDEs is  notoriously nontrivial. While non-constructive existence
results for weak and strong solutions are vast and numerous, the results
and available techniques are seldom if one is interested in quantitative
explicit results regarding the solutions (e.g. their shape, their
specific location in the function space, the number of oscillations,
etc). This is perhaps not surprising, as PDEs are often nonlinear
and defined on infinite dimensional function spaces.
As a consequence of these hurdles and with the recent increase in
computing power, computer-assisted proofs (CAPs) in nonlinear analysis
have become a popular tool for the rigorous computation of
solutions of nonlinear equations. From the early pioneering work on
the Feigenbaum conjectures \cite{MR648529}, to the existence of chaos
and global attractor in the Lorenz equations \cite{MR1276767,MR1701385,MR1870856},
to the more recent works on Wright's conjecture \cite{MR3779642},
chaos in the Kuramoto-Sivashinsky PDE \cite{MR4113209}, 3D gyroids
patterns in materials \cite{jb_stationnary_states}, periodic orbits
in Navier-Stokes \cite{periodic_navier_spontaneous}, blowup in 3D
Euler \cite{blowup_3D_Euler} and imploding solutions for 3D compressible
fluids \cite{imploding_sols_3D_fluids}, the role of CAPs has become
increasingly present in the analysis of ordinary differential equations,
dynamical systems and PDEs. We refer the interested reader to the
book \cite{plum_numerical_verif} and the survey papers \cite{MR1849323,MR1420838,MR3444942,MR3990999}
for further details.

In this paper, we consider the domain of the PDE \eqref{eq:general_F=00003D0_1}
to be $\Omega\bydef\mathbb{R}^{m}$ and we choose $X\bydef H^{l}\subset H^{s}(\mathbb{R}^{m})$
(see the definition in \eqref{eq:H^l}) and $Y\bydef L^{2}(\mathbb{R}^{m})$
as Hilbert spaces of interest. By construction, notice that smooth solutions to \eqref{eq:general_F=00003D0_1} are localized. Then, assumptions on the operators $\mathbb{L}$ and $\mathbb{G}$ are given in Section \ref{ssec:formulation} and complement \eqref{eq:general_F=00003D0_1}. In particular, $\mathbb{L}$ is assumed to be invertible  and $\mathbb{G}$ is polynomial with smaller degrees of differentiability than the ones present in $\mathbb{L}$. Note that the invertibility of $\mathbb{L}$ is fundamental in our analysis (see Remark \ref{rem : essential spectrum of L} for instance) and will be used all along the paper.\\ The main objective of this article
is, given $u_{0}\in H^{l}$, to compute an approximate inverse
of $D\mathbb{F}(u_{0}):H^{l}\to L^{2}(\mathbb{R}^{m})$. More specifically,
that is to construct $\mathbb{A}:L^{2}(\R^{m})\to H^{l}$ which approximates
$D\mathbb{F}(u_{0})^{-1}:L^{2}(\mathbb{R}^{m})\to H^{l}$ such that
the $H^{l}$ operator norm satisfies $\|I_{d}-\mathbb{A}D\mathbb{F}(u_{0})\|_{l} <1$ and 
can be computed explicitly. In particular, the condition $\|I_{d}-\mathbb{A}D\mathbb{F}(u_{0})\|_{l}<1$
provides that both $D\mathbb{F}(u_{0}):H^{l}\to L^{2}$ and $\mathbb{A} : L^{2} \to H^l$ have a bounded inverse (cf. Theorem \ref{th : approximation of inverse and norm}). Moreover, it yields $\|D\mathbb{F}(u_{0})^{-1}\|_{2}\leq\frac{\|\mathbb{A}\|_{2,l}}{1-\|I_{d}-\mathbb{A}D\mathbb{F}(u_{0})\|_{l}}$.
In practice, one wants to have an explicit representation of $\mathbb{A}$
from which standard quantities can easily be computed (e.g. norm,
spectrum, action on a vector, etc). By writing $D\mathbb{F}(u_{0})^{-1}=\mathbb{A}+(I_{d}-\mathbb{A}D\mathbb{F}(u_{0}))D\mathbb{F}(u_{0})^{-1}$,
one can then approximate the action of $D\mathbb{F}(u_{0})^{-1}$.
Therefore, the spectrum of $D\mathbb{F}(u_{0})^{-1}$ may be controlled from the spectrum of $\mathbb{A}$. This allows for instance to control the spectral gap of $D\mathbb{F}(u_{0})^{-1}$ (this is achieved in this paper by computing an upper bound for the operator norm of $D\mathbb{F}(u_{0})^{-1}$). Moreover, building approximate inverses for $D\mathbb{F}(u_{0}) - \lambda I_d$ with different values for $\lambda$ provides non-existence of eigenvalues in some regions of the complex plane. This allows to tackle stability problems (as illustrated in Section \ref{sec : kdv eigen}). Finally, the operator $\mathbb{A}$ is essential in our computer-assisted approach as it allows to construct a Newton-like fixed point operator. This point is detailed in the next paragraph.
Consequently, building an approximate inverse not only allows to perform CAPs but also provides a mean to tackle problems related to the linearized operator.
In order to be able to obtain a representation of $\mathbb{A}$ in
the computer and use rigorous numerics tools to control its action,
we require the construction of $\mathbb{A}$ to be based on Fourier
series. In other words, we want to possess a representation
of $\mathbb{A}$ as an operator on Fourier coefficients from which the quantity $\|I_{d}-\mathbb{A}D\mathbb{F}(u_{0})\|_{l}$
can be evaluated through norm computations on sequences. 

In fact, our main motivation for building such an approximate inverse
$\mathbb{A}$ is the construction of a fixed point operator. Going
back to the general setting of \eqref{eq:general_F=00003D0_1}, 
if $u_{0}\in X$ satisfies $\mathbb{F}(u_0) \approx 0$, then one can build a Newton-like
fixed point operator $\mathbb{T}(u)\bydef u-\mathbb{A}\mathbb{F}(u)$
such that fixed points of $\mathbb{T}$ correspond to zeros of $\mathbb{F}$.
The goal is then to prove that the operator $\mathbb{T}$ is well
defined, and to find some $r>0$ such that $\mathbb{T}:\overline{B_{r}(u_{0})}\to\overline{B_{r}(u_{0})}$
is a contraction, where $\overline{B_{r}(u_{0})}\subset X$ is the
closed ball of radius $r$ centered at $u_{0}\in X$. As a result,
the Banach fixed point theorem yields the existence of a unique fixed
point of $\mathbb{T}$ in $\overline{B_{r}(u_{0})}$. In particular,
if both $u_{0}$ and $\mathbb{A}$ have a rigorous representation in the computer, then one may hope to prove the existence of solutions of \eqref{eq:general_F=00003D0_1} using a CAP.
In fact, that is why computing a rigorous inverse for $D\mathbb{F}(u_{0})$
is a stepping stone in the field of CAP as it is often the key step
in the analysis of such problems (although using the full inverse is sometimes possible, as presented later on in this introduction). Note that there are other ways
to build the fixed point operator. However, in general, most methods
require (at some point) to handle the inverse of the linearization
(either in the weak or the strong formulation). In some cases, although leading to a less precise construction, it is enough to obtain an upper bound for the norm of $D\mathbb{F}(u_{0})^{-1}$. We present the different existing approaches to either approximate $D\mathbb{F}(u_{0})^{-1}$ or compute an upper bound for its operator norm.

As was touched upon earlier, in order to produce an approximate
inverse $\mathbb{A}$, a subsidiary problem is the representation
of functions on $\mathbb{R}^{m}$ in the computer (at least of the approximate solution $u_{0}$) using Fourier series. This has partially been
done by Plum et al. \cite{plum_numerical_verif,plum_thesis_navierstokes} by building functions on $H^{1}(\Omega)$
with $\Omega$ unbounded, by considering $\Omega_{0}\subset\Omega$
bounded and defining $\tu_{0}\in H_{0}^{1}(\Omega_{0})$. Then,
using a zero extension of $\tu_{0}$ outside of $\Omega_{0}$, they
obtain a function $u_{0}\in H^{1}(\Omega)$. In particular, they used
finite elements in \cite{plum_thesis_navierstokes} and a sine Fourier
series in \cite{plum_numerical_verif} to build $\tu_{0}$. This approach
results in a representation of $u_{0}$ in the computer. We generalize this process 
by building functions in $H^{s}(\mathbb{R}^{m})$ using \emph{only}
Fourier series, that is, by considering a hypercube $\Omega_{0}$ on which we construct a trace-free approximation $\tu_{0}\in H_{0}^{s}(\Omega_{0})$.
%The challenge here is to ensure that numerically the represented function is trace-free. 
Hence, we develop a finite-dimensional trace
theorem enabling us to project a given Fourier series from $H^{s}(\Omega_{0})$
into $H_{0}^{s}(\Omega_{0})$ (see Section \ref{sec : computer assisted analysis}).
The advantage of this approach is its numerical feasibility and compatibility
with the CAPs methods developed for periodic solutions (e.g. see \cite{jb_stationnary_states,MR2718657}).
Consequently, we can rigorously build functions in $H^{s}(\mathbb{R}^{m})$ having a representation as a sequence of Fourier coefficients,
which is, to the best of our knowledge, a new contribution. 
Depending on the spaces $X,Y$, several results exist
in CAPs in the study of $D\mathbb{F}(u_{0})^{-1}$.
We make a distinction between two main problems. First, when $X$ and $Y$ are spaces of periodic functions on an hypercube $\Omega$, the analysis on Fourier series
can be used and the PDE is turned into a zero finding problem on sequence
spaces. %Owing to the conjugacy between Fourier series and Fourier transform, we will study the problem on $\mathbb{R}^{m}$. 
Second, when studying non-periodic PDEs, $X$ and $Y$ are Sobolev spaces on a domain $\Omega$ with non-periodic boundary conditions. In this setting, finite elements have been the main
tools to study $D\mathbb{F}(u_{0})^{-1}$. We now present a literature
review of both problems.

In the periodic case, the PDE \eqref{eq:general_F=00003D0_1} is turned
into $F(U)=0$ where $F:\hat{X}\to\hat{Y}$ is the new zero-finding
problem in sequence space with $\hat{X},\hat{Y}$ two Banach spaces
(on sequences of Fourier coefficients). Hence, for an approximate solution $U_{0}\in\hat{X}$,
the equivalent problem is to bound the norm of $DF(U_{0})^{-1}$.
One of the main ingredients to achieve such a goal is to exploit the
fact that the Fréchet derivatives are (asymptotically) diagonally
dominant (see \cite{periodic_navier_spontaneous,breden2022computer,period_kuramoto,Sander_equilibrium}
for some illustrations). Indeed, in autonomous semi-linear PDEs, the linear part
$DF(0)$ dominates for high-order modes. Therefore, the tail of $DF(U_{0})$
can be seen as a diagonally-dominant infinite dimensional matrix.
From this, one can approximate the inverse of $DF(U_{0})$ as a finite
matrix acting on a finite part of the sequence, and a tail operator
(which is diagonal) which acts on the tail of the sequence. Neumann
series arguments then give a way to invert $DF(U_{0})$ and approximate
 the inverse. Another way to understand this approach is
to notice that $DF(U_{0})-DF(0)$ is relatively compact with respect
to $DF(0)$. This feature justifies the use of finite matrices to
approximate infinite operators. Moreover, since the problem is set
up on a bounded domain, the resolvant of $DF(0)$ (in the cases where
it makes sense) is compact. It is this crucial point that differentiates
our problem to that in the periodic case (but they are still very
close due to similarities between Fourier series and Fourier transform
as we will show later). Note that it is also possible to obtain compactness from using some features of the equation and not necessarily $DF(0)$. Indeed,  in \cite{breden2022computer} the author uses the inverse Laplacian to leverage compactness in the case of nonlinear diffusion problems. In other cases, the inverse can also be computed explicitly.   Indeed, if
$U_{0}$ is simple enough, it is shown in \cite{cyranka2018construction,MR4379799} 
that one may analytically build the inverse operator and estimate an upper bound.
The linearization $DF(U_{0})$ is, in that case, a tridiagonal operator
for which the inverse operator can be built thanks to recurrence relations.
Using the asymptotics of these sequences, the inverse may then be
approximated. Similarly, the approach in \cite{MR3392647} uses recurrence
relations to approximate inverses of operators with tridiagonally-dominant
linear parts.

For bounded domains $\Omega$ with non-periodic boundary conditions,
different approaches have been used to control the norm of $D\mathbb{F}^{-1}(u_{0}):Y\to X$
or its weak formulation. The first method, that we denote as Nakao's
method \cite{nakao1990numerical,nakao2001numerical}, is based on
finite elements in which the inverse of $DF(u_{0})$ is approximated
by the inverse of a finite-dimensional projection on a finite element
space. In a sense, this approach is very similar to the approach based on diagonal dominance in the periodic case. Indeed compactness once again allows to produce an approximate inverse which one can write as a compact perturbation of the identity. In particular, the compact perturbation is obtained numerically and is represented by a matrix. Under certain assumptions on the finite elements space, the authors in \cite{nakao1990numerical,nakao2001numerical}
are able to invert the linearization and obtain a CAP technique. In
fact, this method has recently been improved using the Schur complement
\cite{MR4182090}.  Additionally, Watanabe et al. extended this result
and developed specific tools for the inversion of $D\mathbb{F}(u_{0})$
in the elliptic case. Then, using a finite dimensional subspace of
Sobolev spaces, they obtained explicit invertibility criteria and
the value of the upper bound. For instance, the method presented in
\cite{watanabe2005verify_invert} gives rigorous estimates for our
desired upper bound. The work \cite{watanabe2019improved}
presents further related results extending the method and give sharper
criteria for which the inverse can be defined. Also, on bounded domains,
Liu et al. \cite{liu_self_adjoint,liu_inverse} developed a method to rigorously estimate
eigenvalues of the linearized operator.
Using a finite dimensional subspace again, the eigenvalues of the
linearized operator are bounded, from above and below, using the eigenvalues
of a finite-dimensional operator. Then, using the theory of symmetric
operators, the norm of $D\mathbb{F}(u_{0})^{-1}$ is determined using
the minimal eigenvalue (in amplitude). Indeed, in the case where $D\mathbb{F}(u_{0})$
is symmetric, then we have that 
\vspace{-.2cm}
\[
\|D\mathbb{F}(u_{0})u\|_{L^{2}(\Omega)}\geq|\lambda|\|u\|_{L^{2}(\Omega)}
\vspace{-.2cm}
\]
where $\lambda$ is the eigenvalue of $D\mathbb{F}(u_{0})$ of minimal
amplitude. This method is particularly efficient if $D\mathbb{F}(u_{0})$
is self-adjoint itself and it may be extended to a general operator
by considering $D\mathbb{F}(u_{0})^{*}D\mathbb{F}(u_{0})$ where $D\mathbb{F}(u_{0})^{*}$
is the ajoint operator. Finally, the method by Plum et al. \cite{plum_numerical_verif},
in a similar fashion to the approaches in \cite{liu_self_adjoint,liu_inverse}, uses an approximation of the
eigenvalues of $D\mathbb{F}(u_{0})$ to compute the norm of its inverse. Indeed, using an homotopy method and the Temple-Lehmann-Goerisch method, they
are able to find a lower and upper bound for the eigenvalue of $D\mathbb{F}(u_0)$. Note that the control of the spectrum provides a norm for the inverse of $D\mathbb{F}(u_0)$ in the weak formulation. However,
using results on equivalence of norms, a bound for the norm in the strong formulation can easily be obtained. This feature is for instance displayed in \cite{plum_numerical_verif}.

The common ingredient of these methods is the use, at some point,
of compactness which either leads to a natural approximation of infinite
operators by matrices or to the spectral theorem that enables to evaluate
the norm via eigenvalues. However, as we pointed out earlier, in our
case, $\Omega=\mathbb{R}^{m}$ is unbounded and these compactness
results do not hold. Consequently, the methods presented above cannot readily apply and the analysis has to be modified to gain back the notion of compactness.

To the best of our knowledge, the only general method that can be
extended to unbounded domains is the method described in \cite{plum_numerical_verif}.
Indeed, they consider the weak formulation of $\mathbb{F}(u)=0$ and
try to build an upper bound for the norm of the inverse of the weak
operator associated to $D\mathbb{F}(u_{0})$. This operator is defined
via the inner product of the function space. In that setting, they
can use an analysis that falls in the set up of the case of a bounded
domain. For the homotopy method part, they use an extra homotopy that
links the unbounded domain case to the bounded one. In fact, they
use a domain decomposition homotopy \cite{Plum2000eigenvalue_domain_decomp}
to link the eigenvalues of a problem defined on a bounded domain $\Omega_{0}$
to the eigenvalues of the full problem. As a result, they may use the
coefficients homotopy as in the bounded domain case. This technique
allows to prove weak solution to PDEs on unbounded domain as demonstrated
in \cite{plum_numerical_verif} (where Schrödinger's equation on $\mathbb{R}^{2}$
is treated) or in \cite{plum_thesis_navierstokes} where the Navier-Stokes
equations are considered on an infinite strip with obstacle. It is
worth mentioning that constructive existence of solutions of PDEs
defined on the line can be obtained using the parameterization method
(e.g. see \cite{MR1976079,MR1976080,MR2177465}) to set-up a projected
boundary value problem that can then be solved using Chebyshev series
or splines (e.g. see \cite{MR2821596,MR3741385,MR4068579}). Recently, an extension of the parametrization method has been developed in \cite{radia_jb_jp_henot} to treat radially symmetric solutions to elliptic semiliear PDEs on $\R^d$. Reducing the problem to a non-autonomous ODE system, the authors were able to provide a Lipschitz bound for the local graph of a centre-stable manifold using Lyapunov–Perron method. As an illustration, they presented a proof of existence of a radially symmetric solution in the cubic Klein–Gordon equation on $\R^3$, in the planar Swift–Hohenberg equation and in a three-component FitzHugh–Nagumo system.   

Now, in the context of the present work, we wish to build an
approximate inverse of the strong formulation of the operator.  In this case, the challenging part comes from
the fact that we cannot use the theory of bi-linear forms of the operators
as done in \cite{plum_numerical_verif}. The
main advantage is, however, to be able to prove strong solutions to the PDE,
as opposed to weak solutions, which is fundamental from the point
of view of PDE theory. For instance, having a strong solution allows to study spectral stability (as illustrated in Section \ref{sec : stability kawahara}), whereas the linearized operator might not even be defined in the weak setting. Now, in terms of the understanding of the PDE, building an approximate inverse
allows not only to compute an upper bound for the norm of $D\mathbb{F}(u_{0})^{-1}$,
but also to approximate its action on functions, as presented earlier. These problems are
answered in Section \ref{sec:bound_inverse} of this work and lead
to a new method to build  approximate inverses of strong PDE
operators on unbounded domains. In particular, we introduce a bound $\mathcal{Z}_u$, which, in terms of computer-assisted analysis, is the quantity that separates periodic boundary value problems (which have been widely studied) and problems on $\R^m$. In practice, once this bound in known, then the rest of the computations are very similar the ones required during a CAP of a periodic solution using Fourier series, as presented in \cite{period_kuramoto, periodic_navier_spontaneous, Sander_equilibrium, breden2022computer} for instance.

As a consequence, the novelty of our approach is the combination of
the theory of Sobolev spaces and Fourier series analysis, resulting in a computer-assisted method to tackle PDEs on $\R^m$. The presented
method is fully spectral and the required bounds in the computer-assisted
approach are computed following the approach in \cite{periodic_navier_spontaneous,period_kuramoto}. Moreover, the success of our analysis is due to recovery of the notion of compactness. Indeed, instead of considering $\mathbb{L}^{-1}$ alone (which can be done on bounded domains), we study $D\mathbb{G}(u_0)\mathbb{L}^{-1}$ and prove this this operator is compact from $L^2(\R^m)$ to itself (cf. Lemma \ref{lem:compact}). This theoretical result is fundamental as it justifies our spectral construction on $\Omega_0$.   Consequently, after building an approximate solution $u_{0}$ through
its Fourier coefficients, the central result (see Section~\ref{sec:bound_inverse})
of this article enables us to build an approximate inverse
for $D\mathbb{F}(u_{0})$ using a bounded linear operator in $\ell^{2}.$
The construction of this approximation depends on the corresponding
 operator  $DF(U_{0})$ on sequences of Fourier coefficients. Moreover,
we show in Section \ref{sec:bound_inverse} that the periodic problem
``tends'' to the unbounded problem at the rate $\mathcal{Z}_{u}$,
where this quantity is exponentially decaying with the size of the
domain $\om$ (cf. Theorem \ref{th : Zu computation}). 

The paper is organized as follows. We begin in Section~\ref{sec:set_up}
by establishing the necessary tools and assumptions in order to set
up our method. Then we expose in Section~\ref{sec:bound_inverse}
the spectral method to compute an approximate inverse for
$D\mathbb{F}(u_{0})$. In Section~\ref{sec : computer assisted analysis},
we show how the construction of the approximate inverse may be used
to prove (constructively) existence of strong solutions via a computer-assisted
analysis. In Section~\ref{sec:constraigned} we show how the method
can be generalized to parameter dependent PDEs with extra equations
(in particular eigenvalue problems). Finally, Section~\ref{sec:kawahara}
provides an application of our approach for the Kawahara equation
(sometimes called 5th order KdV equation). In particular, we are able
to  prove the existence and stability of a soliton solution. All codes to perform the computer-assisted proofs
are available at \cite{julia_cadiot}.

%\begin{remark} In this paper, we made a conscious choice of presenting
%a one-dimensional application in order to present the method in the
%simplest possible scenario. An application of our method to a two-dimensional
%example is currently being developed. \end{remark}

\section{Set-up of the problem}\label{sec:set_up}

We begin this section by introducing the usual notations for Sobolev spaces that we will use throughout this article. We  refer the interested reader  to \cite{mclean2000strongly} for a complete presentation of such spaces and their usual properties. 

\subsection{Notations on Sobolev spaces}

Let $m \in \mathbb{N}$ and let $\Omega_0 \subset \mathbb{R}^m$ be a bounded Lipschitz  domain. Denote by $H^s(\Omega_0)$ the usual Sobolev space of order $s \in \mathbb{R}$ on $\Omega_0$ and $H^s(\mathbb{R}^m)$ the one on $\mathbb{R}^m$. Moreover,  $(\cdot,\cdot)_{H^s(\R^m)}$ and $\| \cdot \|_{H^s(\R^m)}$  represent the usual inner product and norm on $H^s(\R^m)$, while  $(\cdot,\cdot)_{H^s(\Omega_0)}$ and $\| \cdot \|_{H^s(\Omega_0)}$ represent the ones on $H^s(\Omega_0)$. Denote by $H^s_0(\Omega_0)$ the closure in $H^s(\Omega_0)$ of the space of infinitely differentiable compactly supported functions on $\Omega_0$.
When $s=0$, we use the Lebesgue notation $L^2 = L^2(\mathbb{R}^m)$ or $L^2(\Omega_0).$ More generally, $L^p = L^p(\R^m)$ is the usual $p$ Lebesgue space associated to its  norm $\| \cdot \|_{p}$.
Denote by $\mathcal{S} \bydef \left\{ u \in C^{\infty}(\mathbb{R}^m) : \sup_{x\in \mathbb{R}^m}|x^\beta \partial^\alpha u(x)| < \infty \text{ for all } \beta, \alpha \in \mathbb{N}^m \right\}$ the {\em Schwartz space} on $\mathbb{R}^m$. Denote by $\mathcal{B}(L^2)$ the space of bounded linear operators on $L^2$ and given $\B \in \mathcal{B}(L^2)$, denote by $\B^*$ the adjoint of $\B$ in $L^2.$ Moreover, given $z \in \mathbb{C}$, denote by $z^*$ the complex conjugate of $z$.

Denote by  $\mathcal{F} :L^2 \to L^2$ the \textit{Fourier transform} operator and
write $\mathcal{F}(f) \bydef \hat{f}$, where $\hat{f}$ is defined as $\hat{f}(\xi) \bydef \int_{\R^m}f(x)e^{-i2\pi x\cdot \xi}dx$ for all $\xi \in \R^m$. Similarly, the \textit{inverse Fourier transform} operator is expressed as $\mathcal{F}^{-1}$ and defined as $\mathcal{F}^{-1}(f)(x) \bydef \int_{\R^m}f(\xi)e^{i2\pi x\cdot \xi}d\xi$. In particular, recall the classical Plancherel's identity
\vspace{-.2cm}
\begin{equation}\label{eq : plancherel definition}
    \|f\|_2 = \|\hat{f}\|_2, \quad \text{for all }f \in L^2.
    \vspace{-.2cm}
\end{equation}
Finally, for $f_1,f_2 \in \mathcal{S}$, the continuous convolution of $f_1$ and $f_2$ is represented by $f_1*f_2$.

\subsection{Formulation of the problem}\label{ssec:formulation}

Given a function $\psi : \R^m \to \R$, such that $\psi\in L^2(\R^m)$, we consider the following class of semi-linear PDEs on $\mathcal{S}$
\vspace{-.2cm}
\begin{equation}\label{eq : f(u)=0 on S}
    \mathbb{L}u + \mathbb{G}(u) = \psi
    \vspace{-.2cm}
\end{equation}
 where $\mathbb{L}$ is a linear differential operator and $\mathbb{G}$ is a purely nonlinear ($\mathbb{G}$ does not contain linear operators) polynomial differential operator of order $N_{\mathbb{G}} \in \mathbb{N}$  where $N_{\mathbb{G}} \geq 2$, that is we can decompose it as a finite sum 
 \vspace{-.3cm}
\begin{equation}\label{def: G and j}
     \mathbb{G}(u) \bydef \displaystyle\sum_{i = 2}^{N_{\mathbb{G}}}\mathbb{G}_i(u) 
     \vspace{-.2cm}
\end{equation}
where $\mathbb{G}_i$ is a sum of monomials of degree $i$ in the components of $u$ and some of its partial derivatives.
%a multi-linear differential operator of polynomial degree $i$. 
In particular, for $i \in \{2,\dots,N_{\mathbb{G}}\}$, $\mathbb{G}_i$ can be decomposed as follows
\vspace{-.2cm}
\[
\mathbb{G}_i(u) \bydef \sum_{k \in J_i} (\mathbb{G}^1_{i,k}u) \cdots (\mathbb{G}^i_{i,k}u)
\vspace{-.2cm}
\]
where $J_i \subset \mathbb{N}$ is a finite set of indices, and where $\mathbb{G}^p_{i,k}$ is a linear differential operator for all $1\leq p \leq i$ and $k \in J_i$. $\mathbb{G}(u)$ depends on the derivatives of $u$ but we only write the dependency in $u$ for simplification. For instance, if $m=2$ and $\mathbb{G}(u) = u^2 + u \partial_1 u + u^2\Delta u$, then we define $\mathbb{G}_{2,1}^1 = \mathbb{G}_{2,1}^2 = \mathbb{G}_{2,2}^1 = \mathbb{G}_{3,1}^1 = \mathbb{G}_{3,1}^2 =  I_d$, $\mathbb{G}_{2,2}^2 = \partial_1$ and $\mathbb{G}_{3,1}^3 = \Delta$. Then we have $\mathbb{G}_2(u) = u^2 + u \partial_1 u = \left(\mathbb{G}_{2,1}^1u\right)\left(\mathbb{G}_{2,1}^2u\right) + \left(\mathbb{G}_{2,2}^1u\right)\left(\mathbb{G}_{2,2}^2u\right)$. Similarly, $\mathbb{G}_3(u) = u^2\Delta u = \left(\mathbb{G}_{3,1}^1 u\right) \left(\mathbb{G}_{3,1}^2 u\right) \left(\mathbb{G}_{3,1}^3 u\right).$  We emphasize later on the semi-linearity of the equation in Assumption~\ref{ass : LinvG in L1}. Letting
\vspace{-.2cm}
\begin{equation}\label{eq : f(u)=0 on H^l}
    \mathbb{F}(u) \bydef \mathbb{L}u + \mathbb{G}(u) - \psi= 0,
    \vspace{-.2cm}
\end{equation}
the zeros of $\mathbb{F}$ are solutions of \eqref{eq : f(u)=0 on S}. Since $\mathbb{G}$ is purely nonlinear,  the linear operator is given by
\vspace{-.2cm}
\[
\mathbb{L} = D\mathbb{F}(0).
\vspace{-.2cm}
\]
We complement equation (\ref{eq : f(u)=0 on S}) with two assumptions on the operators $\mathbb{L}$ and $\mathbb{G}$, which are natural to achieve our objectives. \\
We first assume that $\mathbb{L} = D\mathbb{F}(0) : \mathcal{S} \to \mathcal{S}$ is invertible. Assuming the invertibility of $\mathbb{L}$ is a strong assumption. Indeed, our objective is to invert $D\mathbb{F}(u_0)$, for $u_0$ being an approximate solution of our problem. Consequently, asking for the invertibility of $ D\mathbb{F}(0)$ can seem to be unrelated.  However, we justify this assumption later on in Remark \ref{rem : essential spectrum of L}. Moreover, the invertibility of $\mathbb{L}$   is fundamental in the analysis done in Section \ref{sec:bound_inverse}. We translate the invertibility of $\mathbb{L}$ using its Fourier transform $l$.

\begin{assumption}\label{ass:A(1)}
Assume that the Fourier transform of the linear operator $\mathbb{L}$ is given by 
\begin{equation} \label{eq:assumption1_on_L}
\mathcal{F}\big(\mathbb{L}u\big)(\xi) = l(\xi) \hat{u}(\xi), \quad \text{for all } u\in \mathcal{S},
\end{equation}
where $l(\cdot)$ is a polynomial in $\xi$. Moreover, assume that
\begin{equation} \label{eq:l>0}
|l(\xi)| >0, \qquad \text{for all } \xi \in \mathbb{R}^m.
\end{equation}
\end{assumption}

Using the definition of the polynomial $l$ from Assumption~\ref{ass:A(1)} and using \eqref{eq:l>0}, define 
\begin{equation} \label{eq:H^l}
H^l \bydef \left\{ u \in L^2 
: \int_{\mathbb{R}^m}|\hat{u}(\xi)|^2|l(\xi)|^2d\xi < \infty \right\},
\end{equation}
which is a Hilbert space that provides a natural domain for the linear operator $\mathbb{L}$, and is endowed with the inner product 
\[
(u,v)_l \bydef  \displaystyle\int_{\mathbb{R}^m}\hat{u}(\xi)\hat{v}(\xi)^*|{l}(\xi)|^2d\xi.
\]
Denote by $\|u\|_l \bydef \sqrt{(u,u)_l}$ the induced norm on $H^l$. By definition of the inner product on $H^l$ and by Plancherel's identity \eqref{eq : plancherel definition}, we obtain from \eqref{eq:assumption1_on_L} that
\begin{equation} \label{eq : norm_equality_Hl_L2}
\|u\|_l = \| \mathbb{L} u\|_2, \quad \text{for all } u \in H^l.
\end{equation}

\begin{remark}
    Note that $H^l$ is an abuse of notation not to be confused with the usual Sobolev spaces $H^s$. In general,  if $l(\xi) = \mathcal{O}(|\xi|^s)$  for some $s>0$, we have $H^l \subset H^s(\R^m)$ (using Plancherel's identity), when $H^l$ and $H^s(\R^m)$ are seen as sets of functions. However, the difference lies in the definition of the norm, which is a crucial point in our analysis as $\mL : H^l \to L^2$ is an isometry (cf. \eqref{eq : norm_equality_Hl_L2}).  To differentiate these spaces, we explicitly write the dependency in the domain when using a classical Sobolev space (e.g. $H^s(\R^m)$ or $H^s(\om)$) and keep $H^l$ for the specific Hilbert space defined in \eqref{eq:H^l}.
\end{remark}

Recall that we assume the PDE \eqref{eq : f(u)=0 on S} to be semi-linear, which helps with the analysis (e.g. see \cite{precup2012semilinear} or \cite{evans2010partial} for examples).  In other words, the degree of differentiability in $\mathbb{G}$ needs to be strictly smaller than the one in $\mathbb{L}$. Using the Hilbert space defined in \eqref{eq:H^l}, we consider $\psi \in L^2$ and look for $u \in H^l$ a solution of the PDE \eqref{eq : f(u)=0 on S}.  This requires introducing an assumption which implies that $\mathbb{G}$ is well-defined on $H^l$ and that $\mathbb{G}_i$ (see \eqref{def: G and j}) is a bounded multi-linear differential operator of order $i$ on $H^l$. The next  Assumption~\ref{ass : LinvG in L1} then provides a sufficient condition under which the operator $\mathbb{G}$ is well-defined and smooth on $H^l$ (as proven later on in Lemma \ref{Banach algebra}). Moreover, it complements the semi-linearity by specifying some requirements on the monomials of $\mathbb{G}$. These requirements will be useful later on for the analysis of Section \ref{sec:bound_inverse} (see Lemma \ref{lem : computation of falpha}). 

\begin{assumption}\label{ass : LinvG in L1}
For all $2 \leq i \leq N_{\mathbb{G}}$, $k \in J_i$ and $1 \leq p \leq i$, define $g_{i,k}^p$ such that
\vspace{-.2cm}
\[
\mathcal{F}\bigg(\mathbb{G}_{i,k}^p u\bigg)(\xi) = g_{i,k}^p(\xi)\hat{u}(\xi),
\quad \text{for all } \xi \in \mathbb{R}^m, \text{ and  assume that } ~\frac{g_{i,k}^p}{l} \in L^1.
\vspace{-.2cm}
\]
\end{assumption}
Assumption~\ref{ass : LinvG in L1} is stronger than the usual semi-linearity assumption as we also assume additional decay for each $g_{i,k}^p.$  Under Assumptions \ref{ass:A(1)} and \ref{ass : LinvG in L1}, one can prove that $\mathbb{G} : H^l \to H^1(\R^m)$ is well-defined. The fact that $\G$ maps $H^l$ to $H^1(\R^m)$ allows to obtain the regularity of the solutions a posteriori (cf. Proposition~\ref{prop : regularity of the solution}). Hence, looking for solutions in $H^l$ becomes equivalent to looking for classical solutions. Moreover, we compute explicit constants $\kappa_i >0$ such that $\|\G_i(u)\|_2 \leq \kappa_i \|u\|_l$ for all $u \in H^l$, which are used to perform the nonlinear analysis required in our Newton-Kantorovich approach (see Lemma~\ref{lem : Z2 bound} and Lemma~\ref{lem : Z_2 bound Kdv} for instance). The next Lemma~\ref{Banach algebra} is central in the set-up of this paper as it provides the aforementioned properties for $\G$ and each $\G_i$.

\begin{lemma}\label{Banach algebra}
Suppose that Assumptions \ref{ass:A(1)}  and \ref{ass : LinvG in L1} are satisfied. Then,
\begin{align}\label{eq : semi linear regularity}
    \mathbb{G}(u) \in H^1(\mathbb{R}^m), \quad \text{for all } u \in H^l.
\end{align}
For each $i \in \{2,\dots,N_{\mathbb{G}}\}$, let $\kappa_i >0$ satisfying
\begin{equation}\label{def : definition of kappai banach algebra}
    \kappa_i \geq \sum_{k \in J_i} \min_{p \in \{1, \dots, i\}}\left\|\frac{g^1_{i,k}}{l}\right\|_2 \cdots \left\|\frac{g^{p-1}_{i,k}}{l}  \right\|_2 \left\|\frac{g^{p}_{i,k}}{l}\right\|_\infty   \left\|\frac{g^{p+1}_{i,k}}{l}\right\|_2  \cdots \left\|\frac{g^{i}_{i,k}}{l} \right\|_2.
\end{equation}
Then, for all $u \in H^l$,
\vspace{-.1cm}
\begin{equation}\label{eq : multilinearity of G}
    \|\mathbb{G}_i(u)\|_2 \leq  \kappa_i \|u\|_l^i.
    \vspace{-.1cm}
\end{equation}

\end{lemma}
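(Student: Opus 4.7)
The plan is to prove both statements via Fourier analysis, starting with the quantitative bound~\eqref{eq : multilinearity of G} and then deducing the regularity claim~\eqref{eq : semi linear regularity}. For the $L^2$ bound on $\G_i(u)$, I would first apply Plancherel's identity~\eqref{eq : plancherel definition} to convert $\|\G_i(u)\|_2$ into the $L^2$ norm of its Fourier transform. Using that the Fourier transform converts products into convolutions and using the definition of the symbols $g^p_{i,k}$, one obtains
$$\widehat{\G_i(u)} = \sum_{k \in J_i} (g^1_{i,k}\hat{u}) \ast \cdots \ast (g^i_{i,k}\hat{u}).$$
The key algebraic trick is to rewrite each factor as $g^p_{i,k}\hat{u} = (g^p_{i,k}/l)\cdot(l\hat{u})$, so that the ``bad'' factor $l\hat u$ satisfies $\|l\hat{u}\|_2 = \|u\|_l$ by definition of the $H^l$ norm.

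Next, I would fix a pivot index $p^\ast \in \{1,\dots,i\}$ and separate its role from the other indices. For $p \neq p^\ast$, Cauchy--Schwarz gives $\|g^p_{i,k}\hat{u}\|_1 \leq \|g^p_{i,k}/l\|_2 \|u\|_l$, while for the pivot the $L^\infty$--$L^2$ pairing gives $\|g^{p^\ast}_{i,k}\hat{u}\|_2 \leq \|g^{p^\ast}_{i,k}/l\|_\infty \|u\|_l$. Iterating Young's convolution inequality (each convolution of an $L^1$ function with an $L^2$ function lies in $L^2$ with norm no larger than the product of the norms) yields
$$\bigl\|(g^1_{i,k}\hat{u}) \ast \cdots \ast (g^i_{i,k}\hat{u})\bigr\|_2 \leq \Bigl(\prod_{p\neq p^\ast}\|g^p_{i,k}/l\|_2\Bigr)\, \|g^{p^\ast}_{i,k}/l\|_\infty \, \|u\|_l^i.$$
Minimising over the choice of pivot $p^\ast$ and summing over $k \in J_i$ reproduces precisely the definition of $\kappa_i$ in~\eqref{def : definition of kappai banach algebra}, proving~\eqref{eq : multilinearity of G}.

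For~\eqref{eq : semi linear regularity}, summing the bounds over $2 \leq i \leq N_\G$ already yields $\G(u) \in L^2$. For the gradient, apply Leibniz's rule to each product $(\G^1_{i,k}u)\cdots(\G^i_{i,k}u)$: differentiating in $\xi_j$ produces a sum of terms in which exactly one factor has its symbol $g^p_{i,k}$ replaced by $2\pi i \xi_j g^p_{i,k}$. Taking this modified index as the pivot and repeating the argument above reduces the $L^2$ control of $\nabla \G(u)$ to verifying that $(\xi_j g^p_{i,k})/l \in L^\infty$ for each $j,p$. Since $l$ is polynomial and nowhere vanishing by Assumption~\ref{ass:A(1)}, the integrability $g^p_{i,k}/l \in L^1(\R^m)$ in Assumption~\ref{ass : LinvG in L1} forces $\deg l - \deg g^p_{i,k} > m \geq 1$, and the claim follows.

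The main obstacle I anticipate is justifying that the $L^2$ and $L^\infty$ norms appearing in~\eqref{def : definition of kappai banach algebra} are all finite under the seemingly weaker $L^1$ hypothesis of Assumption~\ref{ass : LinvG in L1}. This comes down to a degree-counting argument on rational functions: continuity of $g^p_{i,k}/l$ (from~\eqref{eq:l>0}) combined with the decay rate forced by $L^1$ integrability puts $g^p_{i,k}/l \in L^1 \cap L^\infty$, and $L^2$ then follows by the interpolation estimate $\|f\|_2^2 \leq \|f\|_\infty \|f\|_1$. This observation must be recorded at the beginning of the proof, since it underpins the legitimacy of the entire pivot argument.
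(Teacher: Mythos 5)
Your proposal is correct and follows essentially the same route as the paper's proof: Plancherel, turning each monomial into an iterated convolution of the $g^p_{i,k}\hat u=(g^p_{i,k}/l)(l\hat u)$, Cauchy--Schwarz for the $L^1$ factors and an $L^\infty$--$L^2$ pairing for one pivot factor, then Young's inequality, minimization over the pivot and summation over $k\in J_i$, with the finiteness of the $\|g^p_{i,k}/l\|_2$ and $\|g^p_{i,k}/l\|_\infty$ norms justified by the same polynomial-decay observation the paper uses. The only differences are organizational: you treat all $i$ factors at once instead of the paper's pairwise estimate plus induction, and you obtain the $H^1$ regularity via the Leibniz rule (distributing the factor $2\pi i\xi_j$ onto one convolution factor) rather than the paper's equivalent splitting of the weight $(1+|\xi|)$ inside the convolution.
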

\begin{proof}
Let $u \in H^l$ and let $2\leq i \leq N_{\mathbb{G}}$, $k \in J_{i}$, $1 \leq p_1 \leq i$ and $1 \leq p_2 \leq i$. Recall the notations of Assumption~\ref{ass : LinvG in L1}, and denote $h_1 \bydef g^{p_1}_{i,k}$ and $h_2 \bydef g^{p_2}_{i, k}$. Then,
\vspace{-.2cm}
\begin{align*}
    \left|(1+|\xi|)\mathcal{F}\left( (\mathbb{G}^{p_1}_{i, k}u)(\mathbb{G}^{p_2}_{i, k}u)\right)(\xi)\right|
    &= \left|(1+|\xi|)\left((h_1\hat{u})*(h_2\hat{u})\right)(\xi)\right| \\
    &\le \int_{\mathbb{R}^m} |(1+|x|)h_1(x)\hat{u}(x)h_2(\xi-x)\hat{u}(\xi-x)| dx \\ 
    & \quad + \int_{\mathbb{R}^m} |h_1(x)\hat{u}(x)(1+|\xi-x|)h_2(\xi-x)\hat{u}(\xi-x)| dx.
    \vspace{-.2cm}
\end{align*}

Now, using that $l, h_1$ and $h_2$ are polynomials, notice that $\frac{h_1}{l} \in L^1$ by Assumption \ref{ass : LinvG in L1}, therefore $\frac{(1+|\cdot|)h_1(\cdot)}{l(\cdot)}, \frac{(1+|\cdot|)h_2(\cdot)}{l(\cdot)} \in L^\infty$. Therefore we get
\begin{align*}
    \left|(1+|\xi|)\mathcal{F}\left( (\mathbb{G}^{p_1}_{i, k}u)(\mathbb{G}^{p_2}_{i, k}u) \right)(\xi)\right|
    &\leq 
    \left( \max_{x \in \mathbb{R}^m}\frac{(1+|x|)|h_1(x)|}{|l(x)|}\right) (|h_2\hat{u}| * |l\hat{u}|)(\xi) \\
    & \quad + \left( \max_{x \in \mathbb{R}^m}\frac{(1+|x|)|h_2(x)|}{|l(x)|}\right) (|h_1\hat{u}| * |l\hat{u}|)(\xi).
\end{align*}
 Note that $\|l\hat{u}\|_2 = \|u\|_l$ by definition of the norm on $H^l$, then using Young's inequality for the convolution we get
\begin{align*}
    \left\|(1+|\xi|)\mathcal{F}\left( (\mathbb{G}^{p_1}_{i, k}u)(\mathbb{G}^{p_2}_{i, k}u) \right)(\xi)\right\|_2 &\leq 
 \max_{x \in \mathbb{R}^m}\frac{(1+|x|)|h_1(x)|}{|l(x)|} \|h_2\hat{u}\|_1 \|{u}\|_l \\
    & \quad + \max_{x \in \mathbb{R}^m}\frac{(1+|x|)|h_2(x)|}{|l(x)|}\|h_1\hat{u}\|_1 \|u\|_l.
\end{align*}
 Moreover, since $\frac{h_1}{l}, \frac{h_2}{l} \in L^1$ by Assumption \ref{ass : LinvG in L1} and $h_1, h_2, l$ are polynomials, we have  $\frac{h_1}{l}, \frac{h_2}{l} \in L^2$. Therefore, 
 \vspace{-.2cm}
\begin{equation}\label{eq : cauchy S lem banach}
    \|h_1\hat{u}\|_1  = \left\|\frac{h_1}{l} l\hat{u}\right\|_1  %\int_{\mathbb{R}^m} \frac{|h_1(\xi)\hat{u}(\xi)l(\xi)|}{|l(\xi)|}d\xi 
    \leq \left\|\frac{h_1}{l}\right\|_2\|l\hat{u}\|_2 = \left\|\frac{h_1}{l}\right\|_2\|u\|_l
     \vspace{-.2cm}
\end{equation}
by Holder's inequality. Similarly, $\|h_2\hat{u}\|_1 \leq \|\frac{h_2}{l}\|_2\|u\|_l$. This implies that there exists $C>0$ such that 
\begin{align*}
  \left\|(1+|\xi|)\mathcal{F} \left( (\mathbb{G}^{p_1}_{i, k}u)(\mathbb{G}^{p_2}_{i, k}u) \right)(\xi)\right\|_2  \leq C \|u\|_l^2
\end{align*}
for all $u \in H^l.$ Now, recall that if $(1 + |\cdot|)\hat{f}(\cdot) \in L^2$ then $f \in H^1(\R^m).$ In particular,  it implies that $(\mathbb{G}^{p_1}_{i, k}u)(\mathbb{G}^{p_2}_{i, k}u) \in H^1(\mathbb{R}^m)$ for all $u \in H^l.$ Using the same reasoning recursively, we obtain that $\mathbb{G}(u) \in H^{1}(\mathbb{R}^m)$ for all $u \in H^l.$

Similarly, applying Plancherel's identity \eqref{eq : plancherel definition} first, one can prove that 
\begin{align*}
    \|(\mathbb{G}^{p_1}_{i, k}u)(\mathbb{G}^{p_2}_{i, k}u)\|_2 =\|\mathcal{F}( (\mathbb{G}^{p_1}_{i, k}u)(\mathbb{G}^{p_2}_{i, k}u))\|_2 &\leq \max_{x \in \mathbb{R}^m}\frac{|h_1(x)|}{|l(x)|} \left\|\frac{h_2}{l}\right\|_2 \|u\|_l^2
    =  \left\|\frac{h_1}{l}\right\|_\infty \left\|\frac{h_2}{l}\right\|_2 \|u\|_l^2.
\end{align*}
Notice that the role of $h_1$ and $h_2$ can be permuted in the previous inequality. Consequently, we also get that 
\begin{align*}
    \|(\mathbb{G}^{p_1}_{i, k}u)(\mathbb{G}^{p_2}_{i, k}u)\|_2 \leq   \left\|\frac{h_1}{l}\right\|_2 \left\|\frac{h_2}{l}\right\|_\infty \|u\|_l^2,
\end{align*}
which implies that
\begin{equation*}
     \|(\mathbb{G}^{p_1}_{i, k}u)(\mathbb{G}^{p_2}_{i, k}u)\|_2 \leq  \min \left\{ \left\|\frac{h_1}{l}\right\|_\infty \left\|\frac{h_2}{l}\right\|_2,~ \left\|\frac{h_1}{l}\right\|_2 \left\|\frac{h_2}{l}\right\|_\infty \right\}\|u\|_l^2.
\end{equation*}
Consequently, we get
\begin{align*}
 \|(\mathbb{G}^{1}_{i, k}u)\dots(\mathbb{G}^{i}_{i, k}u)\|_2 \leq   \left\|\frac{g^1_{i,k}}{l}\right\|_2 \cdots \left\|\frac{g^{p-1}_{i,k}}{l}  \right\|_2 \left\|\frac{g^{p}_{i,k}}{l}\right\|_\infty   \left\|\frac{g^{p+1}_{i,k}}{l}\right\|_2  \cdots \left\|\frac{g^{i}_{i,k}}{l} \right\|_2
\end{align*}
for all $p \in \{1, \dots,i\}$. This yields
$\|\mathbb{G}_i(u)\|_2 \leq \kappa_i \|u\|_l^i $
for all $u \in H^l$ where $\kappa_i$ satisfies \eqref{def : definition of kappai banach algebra}.
\end{proof}

Under Assumptions \ref{ass:A(1)} and \ref{ass : LinvG in L1},  $\mL$ maps $H^l$ to $L^2$ (cf. Assumption~\ref{ass:A(1)}) and $\mathbb{G}$ maps $H^l$ to $H^1(\R^m) \subset L^2$ (cf. Lemma \ref{Banach algebra}). Recalling \eqref{eq : f(u)=0 on S}, given $\psi \in L^2$, $\mathbb{F}$ maps $H^l$ to $L^2$ and the PDE \eqref{eq : f(u)=0 on H^l} is well-defined on $H^l$. In addition, in the cases where $\psi$ is smooth, the next Proposition~\ref{prop : regularity of the solution} shows that looking for classical solutions to \eqref{eq : f(u)=0 on H^l} is actually equivalent to looking for zeros of $\mathbb{F}$ in $H^l$. 

\begin{prop}\label{prop : regularity of the solution}
Assume $\psi \in H^\infty(\mathbb{R}^m)$ and let $u \in H^l$ such that $u$ solves \eqref{eq : f(u)=0 on H^l}. Then $u \in H^\infty(\mathbb{R}^m)$ and $u$ is a classical solution of \eqref{eq : f(u)=0 on H^l}.
\end{prop}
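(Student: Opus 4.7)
The plan is a standard Sobolev bootstrap driven by the equation rewritten on the Fourier side. From $\mathbb{F}(u)=0$ we have $\mathbb{L}u = \psi - \mathbb{G}(u)$ in $L^2$, and by Assumption~\ref{ass:A(1)} this reads pointwise
\[
l(\xi)\hat{u}(\xi) = \hat{\psi}(\xi) - \widehat{\mathbb{G}(u)}(\xi), \qquad \text{a.e. } \xi \in \mathbb{R}^m.
\]
Since $l$ is a non-vanishing polynomial, $1/l$ is smooth and bounded below (away from zero) on every compact set, and by Assumption~\ref{ass : LinvG in L1} the ratios $g^p_{i,k}/l$ belong to $L^1$. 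The strategy is to iteratively improve the decay of $\hat u$ using this identity together with weighted versions of Lemma~\ref{Banach algebra}.

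First I would introduce the scale of spaces $H^l_s \bydef \{u \in L^2 : (1+|\xi|)^s l(\xi)\hat{u}(\xi) \in L^2\}$, so that $H^l_0 = H^l$, and establish the following weighted analogue of Lemma~\ref{Banach algebra}: for every integer $s \geq 0$, there exists $C_s > 0$ with
\[
\|\mathbb{G}(u)\|_{H^{s+1}(\mathbb{R}^m)} \leq C_s P(\|u\|_{H^l_s}), \qquad u \in H^l_s,
\]
where $P$ is a polynomial. This is proved exactly as in Lemma~\ref{Banach algebra}, replacing the weight $(1+|\xi|)$ by $(1+|\xi|)^{s+1}$ and using the elementary splitting $(1+|\xi|)^{s+1} \leq C_s[(1+|x|)^{s+1}+(1+|\xi-x|)^{s+1}]$ in the convolution, together with the fact that $\frac{(1+|\cdot|)^{s+1} g^p_{i,k}(\cdot)}{l(\cdot)} \in L^\infty$ for $s$ small, and (for larger $s$) the Banach algebra property of $H^{s_0}(\mathbb{R}^m)$ when $s_0 > m/2$.

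With this in hand the bootstrap is immediate. Starting from $u \in H^l = H^l_0$, the weighted lemma gives $\mathbb{G}(u) \in H^1(\mathbb{R}^m)$; combined with $\psi \in H^\infty$, this yields $l\hat{u} = \hat{\psi} - \widehat{\mathbb{G}(u)} \in H^1$, hence $u \in H^l_1$. Iterating, $u \in H^l_s$ for every $s \in \mathbb{N}$. Since $l$ is a non-vanishing polynomial, $H^l_s \subset H^s(\mathbb{R}^m)$, so $u \in H^\infty(\mathbb{R}^m)$. The Sobolev embedding $H^\infty(\mathbb{R}^m) \hookrightarrow C^\infty(\mathbb{R}^m)$ then shows that $u$ is smooth, and the identity $\mathbb{L}u + \mathbb{G}(u) = \psi$ holds pointwise, making $u$ a classical solution.

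The main obstacle is establishing the weighted boundedness of $\mathbb{G}$ uniformly in $s$. For the initial steps (small $s$), one has to check that $(1+|\cdot|)^{s+1} g^p_{i,k}/l \in L^\infty$, which is only automatic when the ``gain'' $l/g^p_{i,k}$ in Assumption~\ref{ass : LinvG in L1} dominates the weight; this restricts how much regularity one gains per step. Once enough regularity is accumulated so that the factors $\mathbb{G}^p_{i,k}u$ live in $H^{s_0}$ with $s_0 > m/2$, the Banach algebra property of $H^{s_0}$ takes over and the bootstrap proceeds without obstruction up to $H^\infty$.
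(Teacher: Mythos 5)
Your overall route is the same as the paper's: from $\mathbb{F}(u)=0$ write $u=\mathbb{L}^{-1}\psi-\mathbb{L}^{-1}\mathbb{G}(u)$ (equivalently $l\hat u=\hat\psi-\widehat{\mathbb{G}(u)}$ on the Fourier side), gain regularity step by step from the mapping property of $\mathbb{G}$ in Lemma~\ref{Banach algebra}, and finish with Sobolev embedding; the paper states the iteration tersely (``repeating this bootstrapping argument''), while you formalize it with the scale $H^l_s$, whose first member $H^l_1$ is exactly the space $H^{l+1}$ introduced in the paper's proof.

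The one step that would fail as written is your proof of the weighted lemma. Splitting $(1+|\xi|)^{s+1}\le C_s\bigl[(1+|x|)^{s+1}+(1+|\xi-x|)^{s+1}\bigr]$ and then loading the whole weight onto $g^p_{i,k}/l$ requires $(1+|\cdot|)^{s+1}g^p_{i,k}/l\in L^\infty$, which Assumption~\ref{ass : LinvG in L1} does not provide for $s\ge 1$ (it yields only one extra power, as exploited in Lemma~\ref{Banach algebra}); and your fallback via the Banach algebra of $H^{s_0}$ with $s_0>m/2$ leaves the intermediate range $1\le s+1\le m/2$ uncovered when $m\ge 4$, so the induction could stall there. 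The obstacle is self-inflicted and disappears with a different distribution of the weight: since the inductive hypothesis is $u\in H^l_s$, write $(1+|x|)^{s+1}h_1(x)\hat u(x)=\frac{(1+|x|)h_1(x)}{l(x)}\,(1+|x|)^{s}l(x)\hat u(x)$, where the first factor is in $L^\infty$ exactly as in the proof of Lemma~\ref{Banach algebra} and the second is in $L^2$ because $u\in H^l_s$, while the other convolution factor $h_2\hat u=\frac{h_2}{l}\,l\hat u$ remains in $L^1$ by Cauchy--Schwarz; Young's inequality then gives $\mathbb{G}(u)\in H^{s+1}(\R^m)$ with no restriction on $s$, i.e. one full order gained per step for every $m$. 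With this correction your induction is precisely the one the paper invokes ($\mathbb{G}(u)\in H^{1}(\R^m)$ gives $\mathbb{L}^{-1}\mathbb{G}(u)\in H^{l+1}$, then repeat), and the rest of your argument (smoothness of $\psi$, $H^\infty\subset C^\infty$, hence a classical solution) is fine.
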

\begin{proof}
By assumption $\mathbb{F}(u)=0$ and hence 
\begin{equation}\label{eq : bootstrap}
  u = - \mathbb{L}^{-1}\mathbb{G}(u) + \mathbb{L}^{-1}\psi  
\end{equation}
using Assumption \ref{ass:A(1)}. Then, using Lemma \ref{Banach algebra}, we know that $\mathbb{G}(u) \in H^1(\R^m)$ so $\mathbb{L}^{-1}\mathbb{G}(u) \in H^{l+1}$, where we define $$H^{l+1} \bydef \left\{ u \in L^2 
: \int_{\mathbb{R}^m}|\hat{u}(\xi)|^2(1+|\xi|)^{2}|l(\xi)|^2d\xi < \infty \right\}.$$ Moreover, as $\psi \in H^\infty(\mathbb{R}^m)$ by assumption, we have $\mathbb{L}^{-1}\psi \in H^\infty(\mathbb{R}^m)$. Therefore, using \eqref{eq : bootstrap}, it yields that $u \in H^{l+1}$. Repeating this bootstrapping argument we obtain that $u \in H^\infty(\mathbb{R}^m)$ (see \cite{adams2003sobolev} for instance). Consequently, $u$ is a classical solution of \eqref{eq : f(u)=0 on H^l} (using that $H^\infty(\mathbb{R}^m) \subset C^\infty(\mathbb{R}^m)$ by Sobolev embeddings). 
\end{proof}

We provide some extra notation required for the analysis of this paper. Given $u \in L^\infty$, denote by
\begin{equation}\label{def : multiplication operator}
    \mathbb{u} : L^2 \to L^2
\end{equation}
the multiplication operator associated to $u$. More specifically, $\mathbb{u}v = uv$ for all $v \in L^2.$ 
Finally, denote by $\|\cdot\|_{l,2}$ the operator norm for any bounded linear operator between the two Hilbert spaces $H^l$ and $L^2$. Similarly denote by $\|\cdot\|_{l}$, $\|\cdot\|_2$ and $\|\cdot\|_{2,l}$ the operator norms for bounded linear operators on $H^l \to H^l$, $L^2 \to L^2$ and $L^2 \to H^l$ respectively. More generally, for two given Banach spaces $X$ and  $Y$, denote by $\|\cdot\|_{X,Y}$ the operator norm for bounded linear operators on $X \to Y$.

\begin{remark}
    Note that Assumption \ref{ass : LinvG in L1} not only allows $\mathbb{G}$ to be well defined on $H^l$, but also provides a posteriori regularity for the solution (assuming $\psi$ is smooth). Moreover, this assumption is fundamental in our Fourier series analysis as it generates diagonal dominance (cf. Remark \ref{rem : Z1 periodic and diag dominance}). We can consequently use to our advantage the widely developed literature in the subject, as described in the introduction.  
\end{remark}

\begin{remark}\label{rem : essential spectrum of L}
The above Assumptions~\ref{ass:A(1)} and \ref{ass : LinvG in L1} are natural in the theoretical and numerical study of PDEs, the goal being to obtain computer-assisted proofs of solutions in $H^l$. In particular, many PDEs, such as the Korteweg-De Vries (KdV), Kawahara and Swift-Hohenberg equations fall under these assumptions. Moreover, for a given $u_0 \in H^l$, Assumption~\ref{ass:A(1)} is necessary to have invertibility of $D\mathbb{F}(u_0)$. Indeed, under  Assumption~\ref{ass : LinvG in L1}, $D\mathbb{G}(u_0)$ is relatively compact with respect to $\mathbb{L}$ (cf. Lemma \ref{lem:compact}) and using Weyl's perturbation theory (see \cite{kato2013perturbation}), the essential spectrum of $D\mathbb{F}(u_0) = \mathbb{L} + D\mathbb{G}(u_0)$ is equal to the essential spectrum of $\mathbb{L}$. Consequently, if there exists $\xi$ such that $l(\xi) = 0$, then $0$ is in the essential spectrum of both $\mathbb{L}$ and $D\mathbb{F}(u_0)$. We thus cannot expect $D\mathbb{F}(u_0) : H^l \to L^2$ to be invertible in this set-up. This is an intrinsic characteristic of the unboundedness of the domain.
Finally, note that Assumptions~\ref{ass:A(1)}~and~\ref{ass : LinvG in L1} are crucial for the approach introduced in the current paper, and relaxing them is work under investigation (see the conclusion in Section~\ref{sec:conclusion} for more details).
\end{remark}

\subsection{Linearization operators}\label{ssec:lin_op}

Given a known element $u_0 \in H^l$ (which will be an approximate solution), we look for solutions to \eqref{eq : f(u)=0 on H^l} of the form $u+u_0$, where we solve for the unknown function $u$. We introduce some notations of the linearization around $u_0$ that will be used throughout this article. In particular, $D\mathbb{G}(u_0): H^l \to L^2$ denotes the linearization about $u_0$ of the nonlinearity of \eqref{eq : f(u)=0 on H^l}. Note that $D\mathbb{G}(u_0)$ is a linear combination of operators of the form $\mathbb{v}_\alpha\partial^{\alpha}$ (composition of a multiplication by $v_\alpha$, given in \eqref{def : multiplication operator}, and a differential operator $\partial^{\alpha}$) where $\alpha \in \mathbb{N}^m$  and $v_\alpha$ is a function in $H^1(\R^m)$ depending on $u_0$ (and possibly on its derivatives). In other words, we have
\begin{equation}\label{eq:assumption_on_G}
    D\mathbb{G}(u_0) \bydef \sum_{\alpha \in J_{\mathbb{G}}} \mathbb{v}_\alpha \partial^\alpha
\end{equation}
where $J_\mathbb{G} \subset \mathbb{N}^m$ is a finite set. Moreover, under Assumption \ref{ass : LinvG in L1}, one can prove that the multiplication operator $\mathbb{v}_\alpha : L^2 \to L^2$ is well-defined for all $\alpha \in J_\mathbb{G}$. This result will be useful in the proof of Theorem \ref{th : approximation of inverse and norm} below.

\begin{prop}\label{prop : L1 property of J_G}
    Let $\alpha \in J_\mathbb{G}$, then $v_\alpha \in L^\infty$ (using the notations of \eqref{eq:assumption_on_G}). In particular, the multiplication operator $\mathbb{v}_\alpha : L^2 \to L^2$ is well-defined.
\end{prop}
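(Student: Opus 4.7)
The plan is to trace through how $v_\alpha$ is built from $u_0$ and its derivatives via the product rule applied to $\mathbb{G}$, and then show each factor is bounded using Assumption~\ref{ass : LinvG in L1} together with the standard fact that $\mathcal{F}^{-1}$ sends $L^1$ into $L^\infty$.

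First, I would make the structural observation that, by differentiating $\mathbb{G}(u) = \sum_{i=2}^{N_\mathbb{G}} \sum_{k \in J_i} (\mathbb{G}^1_{i,k}u)\cdots(\mathbb{G}^i_{i,k}u)$ using the product rule, one sees that $D\mathbb{G}(u_0)$ is a finite sum of composition-operators of the form
\[
\prod_{\substack{p=1\\ p \neq q}}^{i}\big(\mathbb{G}^p_{i,k} u_0\big) \cdot \mathbb{G}^q_{i,k}
\]
for various $i,k,q$. Matching this with the notation in \eqref{eq:assumption_on_G}, one sees that every $v_\alpha$ is a finite $\mathbb{R}$-linear combination of products of $(i-1)$ functions, each factor being of the form $\mathbb{G}^p_{i,k}u_0$ for some admissible $(i,k,p)$. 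Hence it suffices to prove that $\mathbb{G}^p_{i,k}u_0 \in L^\infty$ for all such $(i,k,p)$, and then use that $L^\infty$ is closed under products and finite sums.

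To establish $\mathbb{G}^p_{i,k}u_0 \in L^\infty$, I would apply Assumption~\ref{ass : LinvG in L1}: since $g^p_{i,k}/l \in L^1$ and $g^p_{i,k}, l$ are polynomials with $|l|>0$ everywhere, the quotient $g^p_{i,k}/l$ is smooth and decays at infinity, so it also lies in $L^2$. Then, exactly as in \eqref{eq : cauchy S lem banach}, Cauchy--Schwarz gives
\[
\big\|g^p_{i,k}\hat{u}_0\big\|_1 = \Big\|\tfrac{g^p_{i,k}}{l}\,(l\hat{u}_0)\Big\|_1 \leq \Big\|\tfrac{g^p_{i,k}}{l}\Big\|_2 \|l\hat{u}_0\|_2 = \Big\|\tfrac{g^p_{i,k}}{l}\Big\|_2 \|u_0\|_l < \infty.
\]
Since $\mathcal{F}\big(\mathbb{G}^p_{i,k}u_0\big) = g^p_{i,k}\hat{u}_0 \in L^1$, the Riemann--Lebesgue lemma (i.e.\ $\mathcal{F}^{-1}:L^1 \to L^\infty$ is bounded with operator norm $1$) yields
\[
\|\mathbb{G}^p_{i,k}u_0\|_\infty \leq \big\|g^p_{i,k}\hat{u}_0\big\|_1 \leq \Big\|\tfrac{g^p_{i,k}}{l}\Big\|_2 \|u_0\|_l,
\]
and in particular $\mathbb{G}^p_{i,k}u_0 \in L^\infty$.

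Combining the two steps, each $v_\alpha$ is a finite sum of finite products of $L^\infty$ functions, hence itself in $L^\infty$. The second conclusion is then immediate: for $f \in L^2$ one has $\|\mathbb{v}_\alpha f\|_2 = \|v_\alpha f\|_2 \leq \|v_\alpha\|_\infty\|f\|_2$, so $\mathbb{v}_\alpha:L^2\to L^2$ is a well-defined bounded operator. The only slightly delicate point is the bookkeeping in the first paragraph, making sure the product-rule decomposition of $D\mathbb{G}(u_0)$ really does express each $v_\alpha$ as a product of factors of the form $\mathbb{G}^p_{i,k}u_0$; however, this is a direct consequence of the purely polynomial, monomial-by-monomial description of $\mathbb{G}$ provided in Section~\ref{ssec:formulation}, with no analytical obstacle.
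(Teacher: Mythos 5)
Your proposal is correct and follows essentially the same route as the paper: both hinge on the Cauchy--Schwarz estimate \eqref{eq : cauchy S lem banach} giving $g^p_{i,k}\hat{u}_0 \in L^1$ and on the fact that an $L^1$ Fourier transform yields an $L^\infty$ function. The only cosmetic difference is that you bound each factor $\mathbb{G}^p_{i,k}u_0$ in $L^\infty$ and multiply in physical space, whereas the paper argues directly that $\widehat{v_\alpha}\in L^1$ and inverts; your bookkeeping is, if anything, slightly more explicit for monomials of degree greater than two.
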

\begin{proof}
Let $i \in \{2, \dots, N_{\mathbb{G}}\}$, $k \in J_i$ and $p \in \{1, \dots, i\}$, then using \eqref{eq : cauchy S lem banach} we have
\[
g^p_{i,k}\hat{u} \in L^1
\]
for all $u \in H^l$. Therefore, by definition of $v_\alpha$ in \eqref{eq:assumption_on_G}, we know that $\widehat{v_\alpha}$ is a finite linear combination of functions of the form $g^p_{i,k}\hat{u}$ with $u \in H^l$. This implies that $\widehat{v_\alpha} \in L^1$. Finally, we conclude the proof using that for all $x \in \R^m$
\[
|v_\alpha(x)|\leq  \int_{\R^m}|\widehat{v_\alpha}(\xi)| d\xi = \|\widehat{v_\alpha}\|_1.
\]
\end{proof}

\begin{remark}\label{rem : L1 property of J_G}
Notice from Assumption \ref{ass : LinvG in L1} that $\frac{\xi^\alpha}{l(\xi)} \in L^1$ for all $\alpha \in J_\mathbb{G}$.  Although this result has not been used so far (we used instead that $\frac{\xi^\alpha}{l(\xi)} \in L^2$), it will be useful later on in the proof of Lemma \ref{lem : computation of falpha}. Also, note that if we replace $L^1$ by $L^2$ in Assumption \ref{ass : LinvG in L1}, Assumption \ref{ass : LinvG in L1} can be seen as a generalization of the usual condition one requires in order to get a Banach algebra property for Sobolev spaces. Indeed, if $H^l = H^s(\R^m)$ for some $s >0$, then having $uv \in H^l$ for all $u,v \in H^l$ can be obtained by requiring $\frac{1}{l} \in L^2$ (cf. Lemma \ref{Banach algebra}) or equivalently $s > \frac{m}{2}$, leading to the usual condition for having a Banach algebra (see \cite{adams2003sobolev} for instance). 
\end{remark}

\subsection{Periodic Sobolev spaces}\label{ssec:periodic_space}

The use of Fourier analysis is widespread in both the theoretical and numerical study of PDEs. We refer the interested reader to \cite{bahouri2011fourier} or \cite{iorio_fourier} for a review of some applications. In particular, periodic Sobolev spaces are efficient tools for our approach. In this section, we introduce the needed notations for our Fourier analysis on periodic Sobolev spaces. In particular, the objects of Sections~\ref{ssec:formulation}~and~\ref{ssec:lin_op} in $H^l$ will have their corresponding representation in periodic Sobolev spaces.

We consider an open cube $\Omega_0$ given by 
\begin{equation} \label{eq:definition_Omega_0}
\Omega_0 \bydef (-d_1,d_1) \times \cdots  \times (-d_m,d_m) \subset \R^m.
\end{equation}
Moreover, to simplify the analysis, we assume that 
$$
d \bydef d_1 = \cdots = d_m.
$$
Note that non-uniform hypercubes can be treated in the same fashion.
We begin by introducing the tilde notation for indices that will be regularly used throughout this paper.
\begin{definition}\label{def: tilde index}
Given $n \in \mathbb{Z}^m$, define $\tilde{n} \in \mathbb{\R}^m$ as $\Tilde{n}_i \bydef \frac{n_i}{2d}$ for all $1 \leq i \leq m$.
\end{definition}

Given $s \geq 0$, denote the Hilbert space
\[
    X^s \bydef \{ U=(u_n)_{n\in \mathbb{Z}^m} ~:~ (U,U)_{X^s} < \infty \} \text{ where } (U,V)_{X^s}  \bydef \sum\limits_{\underset{}{n \in \mathbb{Z}^m}} u_n v_n(1+|\tilde{n}|^2)^{s}.\vspace{-.3cm}
\]
The space $X^s$ which will be used as spaces for the sequences of  Fourier coefficients.
Similarly as in the continuous case, define the Hilbert space $X^l$ as 
\[
     X^l \bydef \{ U=(u_n)_{n\in \mathbb{Z}^m} ~:~ (U,U)_l < \infty\} \text{ where } (U,V)_l \bydef \sum\limits_{\underset{}{n \in \mathbb{Z}^m}} u_n v_n|l(\tilde{n})|^{2}.\vspace{-.4cm}
\]
Denote by $\|U\|_l \bydef \sqrt{(U,U)_l}$ the induced norm on $X^l$. 
Finally, $\ell^p$ denotes the usual $p$ Lebesgue space for sequences associated to its natural norm $\| \cdot \|_{p}$.

Since elements in $X^s$ (respectively $X^l$) have a periodic function representation, we denote by $H^s_{per}(\Omega_0)$ (similarly $H^l_{per}(\Omega_0)$) the representation in function space of the elements of $X^s$ (respectively $X^l$). We use capital letters for sequences $U$ and lowercase letters for functions $u$.

As in the continuous case, we introduce the notion of a convolution, which in this context is the discrete convolution between sequences $U=(u_n)_n$ and $V = (v_n)_n$, and is given by
\[
    (U*V)_n \bydef \sum\limits_{\underset{}{k \in \mathbb{Z}^m}} u_kv_{n-k},
    \quad \text{for all } n \in \mathbb{Z}^m.\vspace{-.2cm}
\]
The symbol ``$*$'' is used for both discrete and continuous convolutions when no confusion arises. Now, our operators in (\ref{eq : f(u)=0 on H^l}) have a representation in $X^l$ for the  corresponding periodic boundary value problem on $\Omega_0$.   In fact the linear part $\mathbb{L}$ becomes an operator ${L} : X^l \to \ell^2$ defined as $ LU = (l(\tilde{n})u_n)_{n\in \mathbb{Z}^m}$ for all $U \in X^l.$
By definition of the inner product on $X^l$, we obtain that
\begin{equation} \label{eq:norm_equality_Xl_L2}
\|U\|_l = \| {L} U\|_2, \quad \text{for all } U \in X^l.
\end{equation}
Similarly, the nonlinearity $\mathbb{G}$ given in \eqref{def: G and j} has a representation ${G}$ in $X^l$ that can be written as 
$
G(U) \bydef \sum_{i=1}^{N_\mathbb{G}}{G}_i(U) $
where ${G}_i$ is a multi-linear operator on $X^l$ of order $i$, and where products are interpreted as discrete convolutions. Hence, we can define $F : X^l \to \ell^2$ and the equivalent of the zero finding problem \eqref{eq : f(u)=0 on H^l} on $X^l$ as
\begin{equation}\label{eq : f(u)=0 on X^l}
  F(U) \bydef LU + G(U) - \Psi = 0, \quad \text{for } U \in X^l,  
\end{equation}
 where $\Psi$ is the sequence of Fourier coefficients in $\ell^2$ of $\psi|_{\Omega_0}$ (the restriction of $\psi$ to $\Omega_0$). Similarly as in the continuous case, given $U= (u_n)_{n \in \mathbb{Z}^m} \in \ell^1$, 
\begin{equation}\label{def : discrete conv operator}
    \mathbb{U} : \ell^2 \to \ell^2
\end{equation}
is the discrete convolution operator associated to $U$, that is $(\mathbb{U}V)_n = \displaystyle\sum_{k\in \mathbb{Z}^m} u_kv_{n-k}$ for all $n \in \mathbb{Z}^m$ and all $V=(v_n)_{n \in \mathbb{Z}^m} \in \ell^2.$ Note that $U*V \in \ell^2$ for all $U \in \ell^1$ and all $V \in \ell^2$ using Young's inequality for the convolution. Finally, for $\alpha \in J_\mathbb{G}$, we slightly abuse notation and denote by $\partial^\alpha$ the linear operator given by 
 \begin{align*}
    \partial^\alpha \colon &X^l \to \ell^2\\
    &U \mapsto \partial^\alpha U = ((i2\pi\tilde{n})^\alpha u_n)_{n\in \mathbb{Z}^m}.
\end{align*}
In other words, given $U \in X^l$ and $u_{per} \in H^l_{per}(\Omega_0)$ its function representation on $\om$, then $\partial^\alpha U$ is the sequence of Fourier coefficients of $\partial^\alpha u_{per}.$

\section{Approximation of the inverse of \boldmath$D\mathbb{F}(u_0)$\unboldmath}\label{sec:bound_inverse}

The main difficulty in the study of solutions of (\ref{eq : f(u)=0 on H^l}) on $\mathbb{R}^m$ is that despite the invertibility of $\mathbb{L}$, the operator $\mathbb{L}^{-1}$ is not compact, as it would be the case on a bounded domain. Compactness is often used when one needs to numerically approximate operators in a PDE. In fact, it is a central ingredient in computer-assisted proofs, as it justifies approximating infinite-dimensional operators by matrices. Hence, this leads to an important difference with the study of PDEs on unbounded domains. From that perspective, our challenge is to find a way to gain back some notion of compactness. This is achieved by considering $D\mathbb{G}(u_0)\mathbb{L}^{-1}$ instead of directly $\mathbb{L}^{-1}$. In this section we fix some $u_0 \in H^l$ such that supp$(u_0) \subset \overline{\Omega_0}$, representing an approximate solution of \eqref{eq : f(u)=0 on H^l}.

\subsection{Preliminary results}\label{ssec : preliminary results section 3}

Given $u_0 \in H^l$ with supp$(u_0) \subset \overline{\Omega_0}$, we  develop a method to approximate~$D\mathbb{F}(u_0)^{-1}~:~L^2~\to~H^l$.  Recall that Assumption~\ref{ass:A(1)} provides that $\mathbb{L} : H^l \to L^2$ has a continuous inverse $\mathbb{L}^{-1}:L^2 \to H^l$. Using this property, we can prove the following result.

\begin{lemma}\label{lem:compact}
Let $w_0 \in H^{l}$, then $D\mathbb{G}(w_0)\mathbb{L}^{-1} \colon L^2 \to L^2$ is compact.
\end{lemma}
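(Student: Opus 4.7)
The plan is to prove the stronger statement that $D\mathbb{G}(w_{0})\mathbb{L}^{-1}$ is a Hilbert--Schmidt operator on $L^{2}$; since every Hilbert--Schmidt operator is compact, this yields the lemma. First, I would invoke the decomposition \eqref{eq:assumption_on_G} to write
\begin{align*}
D\mathbb{G}(w_{0})\mathbb{L}^{-1} \;=\; \sum_{\alpha \in J_{\mathbb{G}}} \mathbb{v}_{\alpha}\,\partial^{\alpha}\mathbb{L}^{-1}.
\end{align*}
Since $J_{\mathbb{G}}$ is finite and the Hilbert--Schmidt operators form a linear subspace of $\mathcal{B}(L^{2})$, it suffices to show that each $T_{\alpha} \bydef \mathbb{v}_{\alpha}\partial^{\alpha}\mathbb{L}^{-1}$ is Hilbert--Schmidt on $L^{2}$.

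For a fixed $\alpha \in J_{\mathbb{G}}$, I would identify $\partial^{\alpha}\mathbb{L}^{-1}$ as the Fourier multiplier with symbol $m_{\alpha}(\xi)\bydef (i2\pi\xi)^{\alpha}/l(\xi)$. By Remark~\ref{rem : L1 property of J_G}, $m_{\alpha}\in L^{1}(\mathbb{R}^{m})$. Moreover, $m_{\alpha}$ is a smooth polynomial ratio whose numerator has strictly lower degree than the non-vanishing denominator $l$ (this degree inequality follows from integrability together with $|l|>0$), hence $m_{\alpha}\in L^{\infty}(\mathbb{R}^{m})$. Interpolating, $m_{\alpha}\in L^{2}(\mathbb{R}^{m})$, and Plancherel's identity then gives $K_{\alpha}\bydef \mathcal{F}^{-1}(m_{\alpha})\in L^{2}(\mathbb{R}^{m})$. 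For any $f\in L^{2}$, this yields the integral representation
\begin{align*}
T_{\alpha}f(x) \;=\; v_{\alpha}(x)\,(K_{\alpha}*f)(x) \;=\; \int_{\mathbb{R}^{m}} v_{\alpha}(x)\,K_{\alpha}(x-y)\,f(y)\,dy.
\end{align*}

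A change of variables yields $\|K_{T}\|_{L^{2}(\mathbb{R}^{m}\times\mathbb{R}^{m})}^{2} = \|v_{\alpha}\|_{2}^{2}\,\|K_{\alpha}\|_{2}^{2}$ for the integral kernel $K_{T}(x,y)\bydef v_{\alpha}(x)K_{\alpha}(x-y)$, so the Hilbert--Schmidt property reduces to showing $v_{\alpha}\in L^{2}(\mathbb{R}^{m})$. As recorded in Section~\ref{ssec:lin_op}, $v_{\alpha}\in H^{1}(\mathbb{R}^{m})$: one obtains this by repeating the Fourier--Young--Cauchy--Schwarz chain from the proof of Lemma~\ref{Banach algebra}, applied now to the $(i-1)$-fold product of terms $\mathbb{G}_{i,k}^{q}w_{0}$ that forms the linearized coefficient. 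Once this bound is in hand, each $T_{\alpha}$ is Hilbert--Schmidt on $L^{2}$, hence compact, and summing over the finite index set $J_{\mathbb{G}}$ finishes the argument. The only genuine obstacle is the $L^{2}$-bound on $v_{\alpha}$; the remainder is a standard kernel computation combined with the Fourier multiplier identification.
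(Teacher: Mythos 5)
Your argument is correct, but it takes a genuinely different route from the paper's. The paper first treats $w_0$ with compact support: by Lemma~\ref{Banach algebra}, $D\mathbb{G}(w_0)\mathbb{L}^{-1}$ maps $L^2$ into $H^1$-functions supported in a fixed ball, so the Rellich--Kondrachov embedding $H^1_0(B_r)\hookrightarrow L^2(B_r)$ gives compactness; the general case then follows by density of compactly supported functions in $H^l$ and closedness of the set of compact operators (which tacitly uses continuity of $w_0\mapsto D\mathbb{G}(w_0)\mathbb{L}^{-1}$ in operator norm). You instead prove the stronger statement that each term $\mathbb{v}_\alpha\,\partial^\alpha\iL$ is Hilbert--Schmidt, exhibiting the kernel $v_\alpha(x)K_\alpha(x-y)$ with $K_\alpha=\mathcal{F}^{-1}\bigl(\tfrac{(i2\pi\xi)^\alpha}{l(\xi)}\bigr)\in L^2$ and $v_\alpha\in L^2$, so that
\begin{equation*}
\|\mathbb{v}_\alpha\,\partial^\alpha\iL\|_{HS}\;\le\;\|v_\alpha\|_2\,\|K_\alpha\|_2 .
\end{equation*}
This buys a direct, quantitative argument valid for every $w_0\in H^l$ at once, with no support reduction and no approximation/limit step, and it even upgrades the conclusion from compact to Hilbert--Schmidt; the paper's route, in exchange, reuses only the qualitative ingredients already in place (Lemma~\ref{Banach algebra} plus the support structure) and needs no explicit kernel or multiplier computation.

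Two points you should make explicit. First, the inputs $v_\alpha\in L^2$ (indeed $v_\alpha\in H^1(\R^m)$, as asserted in Section~\ref{ssec:lin_op}) and $\tfrac{\xi^\alpha}{l}\in L^2\cap L^\infty$ are obtained by the same Cauchy--Schwarz/Young manipulations as in the proof of Lemma~\ref{Banach algebra}; your deduction of boundedness and square-integrability of $\tfrac{\xi^\alpha}{l}$ from its $L^1$-membership is exactly the kind of step the paper itself takes there, so it is acceptable within the paper's framework, though for $m\ge 2$ such degree-counting for multivariate rational functions deserves a word of justification (it is immediate for $m=1$, the Kawahara case); this is an informality shared with the paper, not a gap specific to your proof. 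Second, the decomposition \eqref{eq:assumption_on_G} is written at the approximate solution $u_0$, so you should note that the same structure, with coefficients $v_\alpha$ built from $w_0$ and its derivatives, holds at an arbitrary $w_0\in H^l$, which is what the lemma requires.
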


\begin{proof}
Suppose first that supp$(w_0) \subset B_r$ for some $r>0$ where $B_r$ is the open ball centered at $0$ in $\R^m$. Let $u \in H^l$, then by Lemma \ref{Banach algebra}, we have that $D\mathbb{G}(w_0)u \in H^{1}(\R^m)$. Moreover supp$(D\mathbb{G}(w_0)u) \subset B_r$. Let $v$ be the restriction of $D\mathbb{G}(w_0)u$ on $B_r$. By construction, we obtain $v \in H^{1}_0(B_r)$. From the Rellich-Kondrachov theorem (see \cite{mclean2000strongly}), we obtain that $H^{1}_0(B_r)$ is compactly embedded in $L^2(B_r)$. As a consequence, using that $D\mathbb{G}(w_0) : H^l \to H^1(\R^m)$ is bounded (cf. Lemma \ref{Banach algebra}), it yields that $D\mathbb{G}(w_0)\mathbb{L}^{-1}$ is a compact operator from $L^2$ to itself. Then using that the set of smooth functions with compact support is dense in $H^l$ (see \cite{adams2003sobolev}) and that the set of compact operators is closed, we obtain the proof for all $w_0 \in H^l$.  
\end{proof}

Knowing that $D\mathbb{G}(u_0)\mathbb{L}^{-1} : L^2 \to L^2$ is compact, we can approximate it by finite range operators. In particular, it will justify the construction presented in Section \ref{ssec : construction of the approximate inverse}. 

Now, recall that our goal is to construct an approximate inverse for $D\mathbb{F}(u_0)$ using a representation as an operator on Fourier coefficients. Consequently, we need to introduce some transformations and associated spaces to switch from operators on $L^2(\R^m)$ to operators on $\ell^2$.
Let $\gamma : L^2 \to \ell^2$ be defined as 
\begin{align*}
    \gamma(u)  \bydef  \left(\frac{1}{|\om|}\int_\om u(x)e^{-2\pi i \tilde{n}\cdot x}dx \right)_{n \in \mathbb{Z}^m}
\end{align*}
for all $u \in L^2$. Intuitively, given $u \in L^2$, $\gamma(u)$ is the sequence of Fourier coefficients of the restriction of $u$ on $\om$, which is in $L^2(\om)$. Since Fourier series form an orthogonal basis on $L^2(\om)$, $\gamma : L^2 \to \ell^2$ is a bounded linear operator. Now similarly, define $\gamma^\dagger : \ell^2 \to L^2$ by
\begin{align*}
    \left(\gamma^\dagger(U)\right)(x) \bydef \cha(x) \sum_{n \in \mathbb{Z}^m}u_ne^{2\pi i \tilde{n}\cdot x}
\end{align*}
for all $U = (u_n)_{n \in \mathbb{Z}^m} \in \ell^2$ and all $x \in \R^m$, where $\cha$ is the characteristic function on $\om$.  Conversely, given a sequence of Fourier coefficients $U \in \ell^2$, $\gamma^\dagger(U)$ is the extension by zero on $\R^m$ of the function representation of $U$ in $L^2(\om)$. The transformations $\gamma$ and $\gdag$ are natural to study functions in $L^2$ having a support contained in $\om$.

 Now, given a Hilbert space $H$ of functions defined on $\mathbb{R}^m$, denote by $H_\om \subset H$ the subspace of $H$ defined as
\begin{equation}\label{def : Homega}
   H_\om \bydef \{u \in H ~: ~ \text{supp}(u) \subset \overline{\om} \}.
\end{equation}
For instance, we have $L^2_\om \bydef \{u \in L^2 ~:~ \text{supp}(u) \subset \overline{\om} \}$ or $H_\om^l \bydef \{u \in H^l ~:~ \text{supp}(u) \subset \overline{\om} \}$. Then, recall that $\mathcal{B}(L^2)$ (respectively $\mathcal{B}(\ell^2)$) denotes the space of bounded linear operators on $L^2$ (respectively $\ell^2$) and denote by $\mathcal{B}_\om(L^2)$ the following subspace of $\mathcal{B}(L^2)$
\begin{equation}\label{def : Bomega}
    \mathcal{B}_\om(L^2) \bydef \{\mathbb{B}_\om \in \mathcal{B}(L^2) ~:~  \mathbb{B}_\om = \cha \mathbb{B}_\om \cha\}.
\end{equation}
Finally, define $\Gamma : \mathcal{B}(L^2) \to \mathcal{B}(\ell^2)$ and $\Gamma^\dagger : \mathcal{B}(\ell^2) \to \mathcal{B}(L^2)$ as follows
\begin{align}\label{def : Gamma and Gamma dagger}
    \Gamma(\mathbb{B}) \bydef \gamma \mathbb{B} \gdag ~~ \text{ and } ~~  \Gamma^\dagger(B) \bydef \gamma^\dagger {B} \gamma. 
\end{align}
We now derive some useful properties of the previously defined transformations that will help us pass from $\ell^2$ to $L^2$ and vice-versa.
\begin{lemma}\label{lem : gamma and Gamma properties}
    The map $\sqrt{|\om|} \gamma : L^2_\om \to \ell^2$ (respectively $\Gamma : \mathcal{B}_\om(L^2) \to \mathcal{B}(\ell^2)$) is an isometric isomorphism whose inverse is given by $\frac{1}{\sqrt{|\om|}} \gdag : \ell^2 \to L^2_\om$ (respectively $\Gamma^\dagger :   \mathcal{B}(\ell^2) \to \mathcal{B}_\om(L^2)$).\\
    In particular,
    \begin{align}\label{eq : parseval's identity}
        \|u\|_2 = \sqrt{\om}\|U\|_2 \text{ and } \|\mathbb{B}_\om\|_2 = \|B\|_2
    \end{align}
    for all $u \in L^2_\om$ and $\mathbb{B}_\om \in \mathcal{B}_\om(L^2)$, and where $U \bydef \gamma(u)$ and $B \bydef \Gamma(\mathbb{B}_\om)$.
\end{lemma}
\begin{proof}
    The isomorphic property of both $\gamma$ and $\Gamma$ is easily obtained thanks to the fact that Fourier series form an orthogonal basis in $L^2(\om)$. Then the isometry is a direct consequence of Parseval's identity.
\end{proof}
The previous lemma is essential in our analysis as it allows not only to for objects on $L^2_\om$ to have a one-to-one representation in $\ell^2$, but also to compute their norm using their representation in $\ell^2.$ This property is of particular interest for our computer-assisted approach (see Section \ref{sec : computer assisted analysis}) since we are able to use the usual analysis on $\ell^2$ for norm computations. Similarly, given a bounded linear operator $B : \ell^2 \to \ell^2$,  Lemma \ref{lem : gamma and Gamma properties} allows to define a corresponding bounded linear operator $\mathbb{B} \bydef \Gamma^\dagger\left(B\right) : L^2_\om \to L^2_{\om}$ such that $\|\mathbb{B}\|_2 = \|B\|_2.$ This point is fundamental in our strategy for constructing an approximate inverse of $D\mathbb{F}(u_0)$ (cf. \eqref{def : definition of mathbb A}).

\begin{remark}
   Note that we have 
\[
\gamma(u) \in X^l
\]
whenever $u \in H^l_\om$, but $\gdag(U)$ is not necessarily in  $H^l_\om$ given  $U\in X^l$. Indeed, $\gdag(U) \in H^l_\om$ is achieved if the trace of $\gdag(U)|_\om$ on $\partial \om$ satisfies certain conditions. This point is discussed later on in Section \ref{ssec : finite dimensional trace theorem}. Moreover, notice that $\gamma \gamma^\dagger = I_d$, which is the identity on $\ell^2$, but $\gamma^\dagger \gamma = \cha \neq I_d.$ This implies that $\mathbb{B} = \Gamma^\dagger\left(\Gamma\left(\mathbb{B}\right)\right)$ if and only if $\mathbb{B} \in \mathcal{B}_\om(L^2).$ 
\end{remark}

The construction of a numerical approximation requires considering finite dimensional projections. 
Let $N \in \mathbb{N}$, consider $X= \ell^2$ or $X=X^l$, and define the projections $\pi^N : X \to X$ and $\pi_N : X \to X$ as follows
\vspace{-.2cm}
\begin{align}\label{def : projection on size N}
    \left(\pi^N(U)\right)_n  =  \begin{cases}
          U_n,  & n \in I^N \\
              0, &n \notin I^N
    \end{cases} 
     ~~ \text{and} ~~
     \left(\pi_N(U)\right)_n  =  \begin{cases}
          0,  & n \in I^N \\
              U_n, &n \notin I^N
    \end{cases}
    \vspace{-.2cm}
\end{align}
for all $n \in \mathbb{Z}^m$, where $I^N \bydef \{n \in \mathbb{Z}^m,~  |n_1| \leq N,\dots,|n_m|\leq N\}.$ Specifically, the above projection operators will help us separate finite-dimensional computations (which will be handled on the computer) and theoretical estimates (see Section \ref{sec : computer assisted analysis} ). We are now set-up to describe the construction of $\mathbb{A}$.

\subsection{Construction of the approximate inverse}\label{ssec : construction of the approximate inverse}

Recall that we want to build $\mathbb{A} : L^2 \to H^l$ such that
$
\|I_d - \mathbb{A}D\mathbb{F}(u_0)\|_l<1.
$
 Now, notice that we have 
\begin{align*}
    D\mathbb{F}(u_0) = \left(I_d + D\mathbb{G}(u_0)\mathbb{L}^{-1}\right)\mathbb{L} : H^l \to L^2.
\end{align*}
Therefore, we can equivalently look for an approximate inverse $\mathbb{B} : L^2 \to L^2$ of $I_d + D\mathbb{G}(u_0)\mathbb{L}^{-1}$ and define $\mathbb{A} \bydef \mathbb{L}^{-1} \mathbb{B}$. Indeed, because $\mathbb{L} : H^l \to L^2$ is an isometric isomorphism, $\mathbb{A} : L^2 \to H^l$ would be a well-defined bounded operator. Moreover, using Lemma \ref{lem:compact}, we know that $D\mathbb{G}(u_0)\mathbb{L}^{-1} : L^2 \to L^2$ is compact. Consequently, since $I_d + D\mathbb{G}(u_0)\mathbb{L}^{-1}$ is a compact perturbation of the identity, we can construct $\mathbb{B}$ as a compact perturbation of the identity as well. Having this strategy in mind, we present the construction of the approximate inverse.

We start by constructing a bounded linear operator $B : \ell^2 \to \ell^2$ approximating the inverse of $I_d + DG(U_0)L^{-1}$.
In particular, 
\begin{align}\label{eq : decomposition of B}
  B \bydef B^N + \pi_N  
\end{align}
where $B^N$ satisfies $B^N = \pi^N B^N \pi^N$. In other words, $B^N$ can be seen as a matrix approximating the inverse of  $\pi^N\left(I_d + DG(U_0)L^{-1}\right)\pi^N$. The construction of $B^N$ is obtained numerically using floating point arithmetic. Under Assumption \ref{ass : LinvG in L1}, we know that $DG(U_0)L^{-1} :\ell^2 \to \ell^2$ is compact (using diagonal dominance for instance). Consequently $B$ is expected to be a compact perturbation of the identity as well. This justifies the requirement $\pi_NB = \pi_N$. Now, we define 
\begin{align*}
    \mathbb{B}_\om \bydef \Gamma^\dagger(B)
\end{align*}
and we have $\mathbb{B}_\om \in \mathcal{B}_\om(L^2)$ by construction (cf. Lemma \ref{lem : gamma and Gamma properties}). Then, since we look for $\mathbb{B}$ as a compact perturbation of the identity, we define $\mathbb{B} : L^2 \to L^2$ as 
\begin{align*}
    \mathbb{B} \bydef \out + \mathbb{B}_\om,
\end{align*}
where $\out$ is the characteristic function on $\R^m \setminus \om$ and has to be seen as a multiplication operator on $L^2$ in the above equation. Finally, define $\mathbb{A} : L^2 \to H^l$ as 
\begin{align}\label{def : definition of mathbb A}
    \mathbb{A} \bydef \mathbb{L}^{-1} \mathbb{B} = \mathbb{L}^{-1}\left(\out + \Gamma^\dagger\left(\pi_N + B^N\right)\right).
\end{align}
For coherence, we also define $A : \ell^2 \to X^l$ as 
\begin{align}\label{def : definition of A}
    A \bydef L^{-1}B = L^{-1}\left(\pi_N + B^N\right).
\end{align}
Similarly, $A : \ell^2 \to X^l$ is well-defined as $L$ is an isometric isomorphism between $X^l$ and $\ell^2$ and $A$  approximates the inverse of $DF(U_0)$.

Since the above construction is based on the ``matrix" $B^N$, for which we have a rigorous representation on the computer,  the computations associated to $\mathbb{A}$, such as norms, spectrum, etc, can be obtained  using interval arithmetic \cite{Moore_interval_analysis}. As an illustration, the next lemma provides the operator norm computation of $\mathbb{A}$ using the norm of $B^N$.

\begin{lemma} \label{lem:computation_norm_A}
    Let $\mathbb{A} : L^2 \to H^l$ be given in \eqref{def : definition of A}. Then,
    \begin{equation} \label{eq:computation_norm_A}
        \|\mathbb{A}\|_{2,l} = \|\mathbb{B}\|_2 = \max\{1, \|B^N\|_2\}.
    \end{equation}
\end{lemma}
\begin{proof}
     Using \eqref{eq : norm_equality_Hl_L2}, we get $
      \|\A\|_{2,l} = \|\mL \A \|_{2} = \|\B\|_2.$
   Let $u \in L^2$, by construction $\B = \out + \B_\om$ where $\B_\om = \Gamma^\dagger\left(\pi_N + B^N\right) \in \mathcal{B}_\om(L^2)$ and therefore
  \begin{align*}
      \|\B u\|_2^2 = \|\mathbb{1}_{\mathbb{R}^m \setminus \om} u \|^2_2 + \|\B_\om u\|_2^2 &\leq \|\mathbb{1}_{\mathbb{R}^m \setminus \om} u \|^2_2 + \|\B_\om\|_2^2 \|\cha u\|_2^2\\
      &\leq \max\{1, \|\B_\om\|_2^2\}\|u\|_2^2.
  \end{align*}
  But notice that
  \begin{align*}
      \|\mathbb{B}\|_2 = \sup_{u \in L^2, \|u\|_2=1} \|\mathbb{B}u\|_2 \geq \sup_{u \in L^2_\om, \|u\|_2=1} \|\mathbb{B}u\|_2 = \|\B_\om\|_2
  \end{align*}
  and similarly,
   \begin{align*}
      \|\mathbb{B}\|_2 = \sup_{u \in L^2, \|u\|_2=1} \|\mathbb{B}u\|_2 \geq \sup_{u \in L^2, ~\|u\|_2=1 \atop u =  \mathbb{1}_{\mathbb{R}^m \setminus \om} u} \|\mathbb{B}u\|_2 = 1.
  \end{align*}
  Therefore $
      \|\B\|_2 = \max\{1, \|\B_\om\|_2\}.
 $
 Now, using \eqref{eq : parseval's identity}, we have 
 \[
  \|\B_\om\|_2 = \|\pi_N + B^N\|_2.
 \]
 Then, repeating the above computations, we obtain that $\|\pi_N + B^N\|_2 = \max\{1 , \|B^N\|_2\}.$
\end{proof}

\subsection{Upper bounds for \boldmath$\|I_d - \mathbb{A}D\mathbb{F}(u_0)\|_l$\unboldmath~and~\boldmath$\|D\mathbb{F}(u_0)^{-1}\|_{2,l}$\unboldmath}
\label{ssec : upper bound for the norm of the inverse}

Using the construction of Section \ref{ssec : construction of the approximate inverse}, we build an approximate inverse $\mathbb{A} \bydef \mathbb{L}^{-1}\left(\out + \Gamma^\dagger\left(\pi_N + B^N\right)\right) : L^2 \to H^l$ for $D\mathbb{F}(u_0)$. More specifically, $B^N$ is obtained numerically using floating point arithmetic and approximates the inverse of  $\pi^N\left(I_d + DG(U_0)L^{-1}\right)\pi^N$. Given $\mathbb{A}$ constructed in such a way, Theorem~\ref{th : approximation of inverse and norm} below provides an upper bound for $\|I_d - \mathbb{A}D\mathbb{F}(u_0)\|_l$, i.e. it measures how well $\mathbb{A}$ approximates $D\mathbb{F}(u_0)^{-1}$. Moreover, under the assumption that $\|I_d - \mathbb{A}D\mathbb{F}(u_0)\|_l<1$, Theorem \ref{th : approximation of inverse and norm} provides an upper bound for $\|D\mathbb{F}(u_0)^{-1}\|_{2,l}$.\\
Before stating Theorem~\ref{th : approximation of inverse and norm}, the analysis requires some additional notations. Indeed, we want  a rigorous upper bound for $\|I_d - \mathbb{A}D\mathbb{F}(u_0)\|_l$ on the computer, which requires to approximate $D\mathbb{F}(u_0) = \mathbb{L} + D\mathbb{G}(u_0)$. To do so, we approximate instead $D\mathbb{F}(u_0)\mathbb{L}^{-1} = I_d +  D\mathbb{G}(u_0)\mathbb{L}^{-1}$. The operator $D\mathbb{G}(u_0)\mathbb{L}^{-1} : L^2 \to L^2$ being compact (see Lemma \ref{lem:compact}), it can be approximated by finite range operators, meaning that a matrix approximation on a computer can be justified and quantified. Now, using \eqref{eq:assumption_on_G}, note that
 \vspace{-.2cm}
\begin{equation} \label{eq:assumption_on_G_2}
D\mG(u_0)\iL = \sum_{\alpha \in J_{\mG}} \mathbb{v}_\alpha \partial^\alpha \iL.
 \vspace{-.2cm}
\end{equation}
In particular, since $u_0 \in H^l_\om$, then for all $\alpha \in J_\mathbb{G}$
\begin{equation}\label{eq : valpha has compact support}
    v_\alpha \in L^2_\om\bigcap L^\infty \text{ and we define } V_\alpha \bydef \gamma(v_\alpha) \in \ell^2.
\end{equation}
In practice, we consider the adjoint in $L^2$ of this operator as it will lead to bounds which we can compute using the arithmetic on intervals (cf. Theorem \ref{th : Zu computation}). We have 
\[
(D\mG(u_0)\iL)^* = \sum_{\alpha \in J_{\mG}} (\partial^\alpha\iL)^* \mathbb{v}_\alpha^*.
\]
We derive in the next result an upper bound $\mathcal{Z}_1$ for the operator norm $\|I_d - \mathbb{A}D\mathbb{F}(u_0)\|_l$. Note that $\mathcal{Z}_1$ is the sum of two components, namely a bound $Z_1$ which depends on the periodic problem only and measures how well $A = L^{-1}(\pi_N + B^N)$ approximates the inverse for $DF(U_0)$; and a bound $\mathcal{Z}_{u}$ which arises from the unboundedness of the domain and which  depends on the approximation of the operator $(\partial^\alpha\iL)^* \mathbb{v}_\alpha^*$  by the operator   $\Gamma^\dagger\left((\partial^\alpha L^{-1})^*\right) \mathbb{v}_\alpha^*$.

\begin{theorem}\label{th : approximation of inverse and norm}
Let $\mathbb{A}  : L^2 \to H^l$ and $A : \ell^2 \to X^l$ be given in \eqref{def : definition of mathbb A} and \eqref{def : definition of A} respectively.  Assume that 
\begin{equation}\label{def : definition of Z1 and mathcal Z11}
\begin{aligned}
    \|I_d - ADF(U_0)\|_l &\leq Z_1\\
     \max\{1,\|B^N\|_{2}\} \sum_{\alpha
 \in J_\mathbb{G}}\|\big(\Gamma^\dagger\left((\partial^\alpha L^{-1})^*\right) - (\partial^\alpha\iL)^*\big)\mathbb{v}_\alpha^*\|_2 &\leq \mathcal{Z}_{u},
 \end{aligned}
\end{equation}
for some bounds $Z_1$ and $\mathcal{Z}_{u}$. Then, defining $\mathcal{Z}_1 \bydef Z_1 + \mathcal{Z}_{u}$, we have
\begin{equation}\label{ineq : upper bound Z1 in lemma}
    \|I_d - \mathbb{A}D\mathbb{F}(u_0)\|_l \leq  \mathcal{Z}_{1}.
\end{equation}
Moreover, if $\mathcal{Z}_{1} <1$, then both $\mathbb{A} : L^2 \to H^l$ and $D\mathbb{F}(u_0): H^l \to L^2$ have a bounded inverse  and
\begin{equation}
    \|D\mathbb{F}(u_0)^{-1}\|_{2,l} \leq \frac{\max\{1,\|B^N\|_{2}\}}{1-\mathcal{Z}_{1}}.
\end{equation}
\end{theorem}
\begin{proof}

First, recall that $\B = \mathbb{L} \mathbb{A}$, then using \eqref{eq : norm_equality_Hl_L2} and \eqref{eq:assumption_on_G_2} we get
\begin{align}
\nonumber
     \|I_d - \mathbb{A}D\mathbb{F}(u_0)\|_l &= \sup_{u \in H^l, \|u\|_l =1} \|\mathbb{L}(I_d - \mathbb{A}D\mathbb{F}(u_0))u\|_2 \\
     \nonumber
     &= \sup_{v \in L^2, \|v\|_2 = 1} \|\mathbb{L}(I_d - \mathbb{A}D\mathbb{F}(u_0))\mathbb{L}^{-1}v\|_2\\
     \nonumber
     &= \|I_d - \mathbb{B} ( I_d + \sum_{\alpha \in J_{\mG}} \mathbb{v}_\alpha \partial^\alpha \iL )\|_2 \\
     & \le \|I_d - \mathbb{B}(I_d + \sum_{\alpha \in J_\mathbb{G}} \mathbb{v}_\alpha\Gamma^\dagger\left(\partial^\alpha L^{-1}\right) )\|_2 
     + \| \mathbb{B}\sum_{\alpha \in J_\mathbb{G}} \mathbb{v}_\alpha\left(\Gamma^\dagger\left(\partial^\alpha L^{-1}\right) - \partial^\alpha\iL\right)\|_2,
     \label{ineq : first step in Zu}
\end{align}
where \eqref{ineq : first step in Zu} follows from triangle's inequality. Since $\B = \out + \B_\om$,
\[
    \|I_d - \mathbb{B}(I_d + \sum_{\alpha \in J_\mathbb{G}} \mathbb{v}_\alpha\Gamma^\dagger\left(\partial^\alpha L^{-1}\right) )\|_2 = \|\cha - \mathbb{B}_\om(\cha +  \sum_{\alpha \in J_\mathbb{G}} \mathbb{v}_\alpha\Gamma^\dagger\left(\partial^\alpha L^{-1}\right) )\|_2,
    \]
where we also used that $v_\alpha \in L^2_\om$ (cf. \eqref{eq : valpha has compact support}). 
However, notice that 
\begin{align*}
    \cha - \mathbb{B}_\om\left(\cha +  \sum_{\alpha \in J_\mathbb{G}} \mathbb{v}_\alpha\Gamma^\dagger\left(\partial^\alpha L^{-1}\right) \right)& = \Gamma^\dagger\left(I_d - \hspace{-.1cm}B\left(I_d + \sum_{\alpha \in J_\mathbb{G}}\mathbb{V}_\alpha \partial^\alpha L^{-1}\right)\right) \\
    &= \Gamma^\dagger\left(I_d - BDF(U_0)L^{-1}\right)
\end{align*}
as $\mathbb{B}_\om\in \mathcal{B}_\om(L^2)$ by assumption and $\gamma^\dagger\left(V_\alpha\right) = v_\alpha$ from \eqref{eq : valpha has compact support}. Therefore, using  \eqref{eq : parseval's identity}, we get
\begin{align}
     \|\cha - \mathbb{B}_\om(\cha +  \sum_{\alpha \in J_\mathbb{G}} \mathbb{v}_\alpha\Gamma^\dagger\left(\partial^\alpha L^{-1}\right) )\|_2
     =  \|I_d - BDF(U_0)L^{-1} \|_2  \le Z_1
     \label{eq:Z1_inequality_proof}
     \end{align}
     as $A = L^{-1}B$ and $\|U\|_l = \|LU\|_2$ for all $U \in X^l.$ Let us now focus on the second term $\| \mathbb{B}\sum_{\alpha \in J_\mathbb{G}} \mathbb{v}_\alpha\left(\Gamma^\dagger\left(\partial^\alpha L^{-1}\right) - \partial^\alpha\iL\right)\|_2$ in \eqref{ineq : first step in Zu}. Recall that for a bounded linear operator $\mathbb{T} : L^2 \to L^2$, then $\|\mathbb{T}\|_2 = \|\mathbb{T}^*\|_2$ where $\mathbb{T}^*$ is the adjoint of $\mathbb{T}.$ Using this identity, we get
     \vspace{-.2cm}
     \begin{align}
     \nonumber
         \| \mathbb{B}\sum_{\alpha \in J_\mathbb{G}} \mathbb{v}_\alpha\left(\Gamma^\dagger\left(\partial^\alpha L^{-1}\right) - \partial^\alpha\iL\right)\|_2 &\leq \| \mathbb{B}\|_2 \|\sum_{\alpha \in J_\mathbb{G}} \mathbb{v}_\alpha\left(\Gamma^\dagger\left(\partial^\alpha L^{-1}\right) - \partial^\alpha\iL\right)\|_2\\
         \nonumber
         &\leq \| \mathbb{B}\|_2 \sum_{\alpha \in J_\mathbb{G}} \left\|\left(\Gamma^\dagger\left((\partial^\alpha L^{-1})^*\right) - (\partial^\alpha\iL)^*\right)\mathbb{v}_\alpha^*\right\|_2 \\
         &= \max\{1,\|B^N\|_2\} \sum_{\alpha \in J_\mathbb{G}} \left\|\left(\Gamma^\dagger\left((\partial^\alpha L^{-1})^*\right) - (\partial^\alpha\iL)^*\right)\mathbb{v}_\alpha^*\right\|_2,
         \label{eq:Zu_inequality_proof}
         \vspace{-.3cm}
     \end{align}
     where the last equality follows from formula \eqref{eq:computation_norm_A} in Lemma~\ref{lem:computation_norm_A}.
     We conclude that \eqref{ineq : upper bound Z1 in lemma} holds by combining \eqref{ineq : first step in Zu}, \eqref{eq:Z1_inequality_proof} and \eqref{eq:Zu_inequality_proof}. 
     Finally, let us compute an upper bound for $\|D\mathbb{F}(u_0)^{-1}\|_{2,l}$. Let $u \in H^l$, then
     \vspace{-.2cm}
     \begin{align*}
     \|u\|_l = \|u - \mathbb{A}D\mathbb{F}(u_0)u + \mathbb{A}D\mathbb{F}(u_0)u\|_l 
         &\leq \|u - \mathbb{A}D\mathbb{F}(u_0)u \|_l + \|\mathbb{A}\|_{2,l}\|D\mathbb{F}(u_0)u\|_{2} \\
         &\leq \left(Z_1 + \mathcal{Z}_{u}\right)\|u\|_l + \|\mathbb{A}\|_{2,l}\|D\mathbb{F}(u_0)u\|_{2}.
          \vspace{-.2cm}
     \end{align*}
     If $1-\mathcal{Z}_1 \bydef 1 - Z_1 - \mathcal{Z}_{u} >0$, then $
         \|D\mathbb{F}(u_0)u\|_{2} \geq \frac{1-\mathcal{Z}_1}{\|\A\|_{2,l}} \|u\|_l$
     for all $u \in H^l$.
  But Lemma \ref{lem:compact} yields that   $I_d + D\mathbb{G}(u_0)\iL$ is a Fredholm operator, therefore, we obtain that $D\mathbb{F}(u_0)$ is invertible and 
  \[
       \|D\mathbb{F}(u_0)^{-1}\|_{2,l} \leq  \frac{\|\A\|_{2,l}}{1-\mathcal{Z}_1} = \frac{\max\{1,\|B^N\|_{2}\} }{1-\mathcal{Z}_1},
  \]
  where we used \eqref{eq:computation_norm_A}. Since $\|I_d -\mathbb{A} D\mathbb{F}(u_0)\|_l<1$ a Neumann series argument yields that $\mathbb{A} D\mathbb{F}(u_0) : H^l \to H^l$ has a bounded inverse and hence $\mathbb{A} : L^2 \to H^l$ has a bounded inverse as well.
\end{proof}

\begin{remark}\label{rem : link norm of inverses}
 $Z_1$ is the usual bound one has to compute for the proof of a periodic solution using the radii polynomial approach (see \cite{periodic_navier_spontaneous} or \cite{period_kuramoto} for instance), the only difference being that here the bound needs to be computed with the $\ell^2$ norm, as opposed to a more standard (weighed) $\ell^1$ norm computation. Because we deal with an unbounded domain, the extra term $\mathcal{Z}_{u}$ has to be taken into account. One can expect that $\mathcal{Z}_{u} \to 0$ as $d \to \infty$, which is the result we derive in the next Section \ref{ssec : computation of Z11 theoretical}. Furthermore, if one knows that $DF(U_0)$ is invertible, then one can choose $A = DF(U_0)^{-1}$ in the previous Theorem \ref{th : approximation of inverse and norm} (so that $Z_1 =0$). Moreover, the above reasoning provides 
\[
 \|D\mathbb{F}(u_0)^{-1}\|_{2,l} \leq \frac{\max\{1, \|DF(U_0)^{-1}\|_{2,l}\}}{1-\mathcal{Z}_{u}}
\]
where this time $\mathcal{Z}_{u} >0$ satisfies
\[
\max\{1, \|DF(U_0)^{-1}\|_{2,l}\} \sum_{\alpha \in J_\mathbb{G}} \left\|\left(\Gamma^\dagger\left((\partial^\alpha L^{-1})^*\right) - (\partial^\alpha\iL)^*\right)\mathbb{v}_\alpha^*\right\|_2 \leq \mathcal{Z}_{u}.
\]
If one can compute an upper bound for $\|DF(U_0)^{-1}\|_{2,l}$ (without necessarily using an approximate inverse $\mathbb{A}$), the previous lemma provides an upper bound for $D\mathbb{F}(u_0)^{-1}$ under the condition that $\sum_{\alpha \in J_\mathbb{G}} \|\left(\Gamma^\dagger\left((\partial^\alpha L^{-1})^*\right) - (\partial^\alpha\iL)^*\right)\mathbb{v}_\alpha^*\|_2$ is sufficiently small. This condition is verified using Theorem \ref{th : Zu computation}.
\end{remark}

\subsection{Computation of the bound \boldmath$\mathcal{Z}_{u}$\unboldmath}\label{ssec : computation of Z11 theoretical}

In Section \ref{ssec : upper bound for the norm of the inverse}, given an approximate inverse $\mathbb{A}$ defined in Section \ref{ssec : preliminary results section 3}, we derived a strategy to compute an upper bound $\mathcal{Z}_1$ satisfying $\|I_d - \mathbb{A}D\mathbb{F}(u_0)\|_l \le \mathcal{Z}_1 = Z_1 + \mathcal{Z}_{u}$, where both $Z_1$ and $\mathcal{Z}_{u}$ are defined in \eqref{def : definition of Z1 and mathcal Z11}. The bound $Z_1$ can easily be obtained as its  computation is well-known in the field of computer-assisted proofs in nonlinear analysis (see \cite{period_kuramoto} or \cite{periodic_navier_spontaneous} for instance). The novelty lies in the bound $\mathcal{Z}_{u}$ and this section focuses on its computation. As presented in the introduction, this quantity is the one separating the computer-assisted analysis of periodic solutions using Fourier series and the one of problems on $\R^m$. Specifically, having a sharp $\mathcal{Z}_u$ so that $\mathcal{Z}_1<1$ is essential as it is necessary to prove that both $\mathbb{A}$ and $D\mathbb{F}(u_0)$ have a bounded inverse (cf. Theorem \ref{th : approximation of inverse and norm}). We derive in this section a strategy to compute explicitly the bound $\mathcal{Z}_u$, having in mind a computer-assisted approach.

First, recall that $\mathcal{Z}_u$ depends on the quantity~$\|\left(\Gamma^\dagger\left((\partial^\alpha L^{-1})^*\right) - (\partial^\alpha\iL)^*\right)\mathbb{v}_\alpha^*\|_2$, with $\alpha \in J_\mathbb{G}$. Now, notice that $(\partial^\alpha\iL)^*$ has a Fourier transform given by $\xi \to \frac{(-i2\pi \xi)^\alpha}{l(-\xi)}$. Let us define $f_\alpha : \R^m \to \R$ through its Fourier transform as
\begin{equation} \label{eq:fourier_transform_f_alpha}
    \widehat{f_\alpha}(\xi) \bydef \frac{(-i2\pi \xi)^\alpha}{l(-\xi)}, \quad 
    \text{or equivalently } f_\alpha(x) = \mathcal{F}^{-1}\left(\frac{(-i2\pi\xi)^\alpha}{l(-\xi)}\right)(x)
\end{equation}
for all $\xi,x \in \R^m$. Then, fixing $u \in L^2$ and $\alpha \in J_\mathbb{G}$, we get that
\begin{equation} \label{eq:f_alpha_convolution}
(\partial^\alpha\iL)^* u = \mathcal{F}^{-1}( \widehat{f_\alpha} \hat{u})  =  f_\alpha*u.    
\end{equation}
The next lemma demonstrates that $f_\alpha$ is exponentially decaying. This decay is essential in the approximation of $(\partial^\alpha\iL)^*$ by $\Gamma^\dagger\left((\partial^\alpha L^{-1})^*\right).$

\begin{lemma}\label{lem : computation of falpha}
Let $\alpha \in J_\mathbb{G}$ and recall $f_\alpha$ from \eqref{eq:fourier_transform_f_alpha}. 
Then, there exist $C_\alpha>0$ and $a >0$ such that 
\begin{equation} \label{eq:assymptotic_bound_f_alpha}
    |f_\alpha(x)| \leq C_\alpha e^{-a|x|_1} 
\end{equation}
for all $x \in \mathbb{R}^m$ where $|x|_1 = |x_1| + \cdots + |x_m|$. Moreover, $a$ is independent of $\alpha$.
\end{lemma}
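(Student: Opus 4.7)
The plan is to realize $f_\alpha$ as the inverse Fourier transform of $\widehat{f_\alpha}(\xi) = (-i 2\pi \xi)^\alpha / l(-\xi)$ and obtain exponential decay by a Paley--Wiener-style contour shift. Since Remark~\ref{rem : L1 property of J_G} gives $\widehat{f_\alpha} \in L^1(\mathbb{R}^m)$, the integral $f_\alpha(x) = \int_{\mathbb{R}^m} \widehat{f_\alpha}(\xi)\, e^{i 2\pi x \cdot \xi}\, d\xi$ converges absolutely, so $f_\alpha$ is continuous and bounded to start with.

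The crucial input is a uniform strip of holomorphy. Because $l$ is a polynomial satisfying $|l(\xi)|>0$ for all $\xi \in \mathbb{R}^m$ (Assumption~\ref{ass:A(1)}), and because the integrability built into Assumption~\ref{ass : LinvG in L1} forces $|l|$ to grow sufficiently at infinity, a compactness argument produces $a_0 > 0$ such that $l(-\zeta) \neq 0$ on the tube $T_{a_0} \bydef \{\zeta \in \mathbb{C}^m : |\mathrm{Im}(\zeta)|_\infty \leq a_0\}$. Hence $\widehat{f_\alpha}$ extends holomorphically to $T_{a_0}$, with $|\widehat{f_\alpha}(\xi+i\eta)|$ dominated, uniformly in $|\eta|_\infty \leq a_0$, by an $L^1$ function of $\xi$. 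Crucially, $a_0$ depends only on $l$, not on $\alpha$.

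For fixed $x \in \mathbb{R}^m$, I would then apply Cauchy's theorem one coordinate at a time to deform the contour in each $\xi_j$ from $\mathbb{R}$ to $\mathbb{R} + i\, a_0\, \sigma_j$, where $\sigma_j \bydef \mathrm{sgn}(x_j)$ (using the convention $\mathrm{sgn}(0) = 0$); the vanishing of the side pieces at infinity follows from the polynomial decay of $\widehat{f_\alpha}$ inside $T_{a_0}$. This produces
\begin{equation*}
f_\alpha(x) \;=\; e^{-2\pi a_0 |x|_1}\!\int_{\mathbb{R}^m} \widehat{f_\alpha}\bigl(\xi + i a_0\, \sigma\bigr)\, e^{i 2\pi x \cdot \xi}\, d\xi,
\end{equation*}
since $e^{i 2\pi x \cdot (\xi + i a_0 \sigma)} = e^{i 2\pi x \cdot \xi}\, e^{-2\pi a_0 |x|_1}$. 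Taking absolute values and bounding the remaining integral by $\|\widehat{f_\alpha}(\cdot + i a_0 \sigma)\|_1$, which is finite and uniformly bounded over the finitely many sign patterns $\sigma \in \{-1,0,1\}^m$, yields $|f_\alpha(x)| \leq C_\alpha\, e^{-2\pi a_0 |x|_1}$. Relabeling $a \bydef 2\pi a_0$ delivers \eqref{eq:assymptotic_bound_f_alpha}.

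The main technical hurdle is establishing the zero-free tube $T_{a_0}$: in dimension $m \geq 2$ the bare hypothesis $|l|>0$ on $\mathbb{R}^m$ is not enough (the complex zero set of a polynomial can approach $\mathbb{R}^m$ along escapes to infinity), but combined with the coercivity of $|l|$ at infinity implied by Assumption~\ref{ass : LinvG in L1} and continuity of $l$, a standard compactness-at-infinity argument furnishes such an $a_0 > 0$.
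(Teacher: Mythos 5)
Your proposal follows essentially the same route as the paper's proof: realize $f_\alpha$ as an inverse Fourier transform, extend $\widehat{f_\alpha}(\zeta)=(-i2\pi\zeta)^\alpha/l(-\zeta)$ holomorphically to a strip/tube around $\mathbb{R}^m$ whose width is dictated by the zeros of $l$ alone (hence $a$ independent of $\alpha$), and shift contours coordinate-wise to harvest the factor $e^{-2\pi a_0|x|_1}$; the paper compresses exactly this into an appeal to Cauchy's theorem with $\tilde a=\min_j|a_j|$ over the roots of $l$. In the one-dimensional case (the paper's actual application, and implicitly the setting of the paper's own proof, which speaks of the \emph{finite} set of roots of $l$) your argument is correct and, if anything, more explicit about the uniformity in $\alpha$ and in the sign pattern $\sigma$.

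The gap sits exactly where you flag the "main technical hurdle," and it is not closed by the compactness-at-infinity argument you invoke: for $m\geq 2$, Assumptions~\ref{ass:A(1)} and~\ref{ass : LinvG in L1} do not furnish a zero-free tube. Integrability of the quotients $g^p_{i,k}/l$ (even of $1/l$ itself) is compatible with complex zeros of $l$ approaching $\mathbb{R}^m$ along escapes to infinity. For instance, with $m=2$ take $l(\xi_1,\xi_2)=(\xi_1^2+\xi_2^2)\left[(\xi_1\xi_2-1)^2+1\right]+1$ and $\mathbb{G}(u)=u^2$ (so $J_\mathbb{G}=\{0\}$ and Assumption~\ref{ass : LinvG in L1} only asks $1/l\in L^1$). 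Then $l\geq 1$ on $\mathbb{R}^2$ and $1/l\in L^1(\mathbb{R}^2)$ (for $|\xi_2|\geq 1$ one has $\int_{\mathbb{R}} d\xi_1/l \lesssim |\xi_2|^{-3}$, and symmetrically in $\xi_1$), yet for real $\zeta_2=t$ large the quartic in $\zeta_1$ has roots $\zeta_1\approx(1\pm i)/t$, so the complex zero set comes within distance $O(1/t)$ of $\mathbb{R}^2$ and no tube $T_{a_0}$ exists. In fact the conclusion of Lemma~\ref{lem : computation of falpha} itself fails for such an $l$: a bound $|f_0(x)|\leq C e^{-a|x|_1}$ would make $\int f_0(x)e^{-i2\pi x\cdot\zeta}dx$ holomorphic on the open tube of half-width $a/(2\pi)$, and since it agrees with $1/l(-\zeta)$ on $\mathbb{R}^m$ it would force $l(-\zeta)\neq 0$ there, contradicting the zeros just exhibited. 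So your coercivity/compactness step is false as stated; the zero-free tube has to be imposed as an additional hypothesis on $l$, or verified by hand for the specific symbol (which is trivial when $m=1$, where the zero set really is a finite set of roots off the real line). To be fair, the paper's proof has the same blind spot — its finite-set-of-roots argument is genuinely one-dimensional — so you have located, rather than created, the weak point; but as written your derivation of $T_{a_0}$, and hence the lemma for $m\geq 2$, does not go through. A minor secondary point: even granting the tube, the uniform $L^1$ domination of $\widehat{f_\alpha}(\cdot+i\eta)$ over $|\eta|_\infty\leq a_0$ deserves an argument (a lower bound on $|l(-\xi-i\eta)|$ in terms of $|l(-\xi)|$ for large $|\xi|$), which follows from the linear factorization in one variable but is not automatic in general.
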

\begin{proof}
Fix $\alpha \in J_\mathbb{G}$. Remark~\ref{rem : L1 property of J_G} implies that  $\frac{(-i2\pi\xi)^\alpha}{l(-\xi)} \in L^1$. Denote by $\{b_j + ia_j\}_j$ the finite set of roots of the polynomial $l(\cdot)$. Let $\tilde{a} \bydef \min_{j} |a_j|$, using Assumption~\ref{ass:A(1)}, we know that $|l(x)| >0$ for all $x \in \R^m$,  so $\tilde{a} >0$. In particular, $\tilde{a}$ only depends on $l$ but not on $\alpha$. Moreover, choosing $0<{a}<\tilde{a}$, Cauchy's theorem (see Chapter IX in \cite{reed1975ii}) implies that there exists $C_\alpha$ such that \eqref{eq:assymptotic_bound_f_alpha} holds. 
\end{proof}

\begin{remark}
    Note that the previous lemma provides the existence of constants $C_\alpha$ ($\alpha \in J_\mathbb{G})$, which are not known explicitly. The sharp computation of these constants constitute a key point in this approach, which has to be dealt with in the case to case scenario. In practice, one can for example compute $C_\alpha$ by computing explicitly the function $f_\alpha$ (this is what is achieved in Lemma \ref{lem:kdv_f}). Another approach can be to use Cauchy's theorem and use integral estimates.
\end{remark}

The previous Lemma \ref{lem : computation of falpha} provides an exponential decay for $f_\alpha$, a result which we now use to control  the error when approximating $(\partial^\alpha\iL)^*\mathbb{v}_\alpha^*$ by $\Gamma^\dagger\left((\partial^\alpha L^{-1})^*\right) \mathbb{v}_\alpha^*$. To see this, for each $\alpha \in J_\mathbb{G}$, let us first define the following quantities
\begin{equation} \label{def : Zu1Zu2}
    \mathcal{Z}_{u,1}^\alpha \bydef   \|\mathbb{1}_{\R^m \setminus \om } (\partial^\alpha\iL)^*\mathbb{v}_\alpha^*\|_2,
    %\label{def : Zu1}
    \qquad
     \mathcal{Z}_{u,2}^\alpha \bydef  \left\|\mathbb{1}_{\om } ((\partial^\alpha\iL)^*-\Gamma^\dagger\left((\partial^\alpha L^{-1})^*\right))\mathbb{v}_\alpha^*\right\|_2
     %\label{def : Zu2}.
\end{equation}
In particular, notice that if $\mathcal{Z}_u$ satisfies
\begin{equation}\label{eq : Zu defnition with Zu1 Zu2}
   \mathcal{Z}_u \geq \max\{1, \|B^N\|_2\}\sum_{\alpha \in J_\mathbb{G}}\sqrt{(\mathcal{Z}_{u,1}^\alpha)^2 + (\mathcal{Z}_{u,2}^\alpha)^2}, 
\end{equation}
then it also satisfies \eqref{def : definition of Z1 and mathcal Z11}. Given $\alpha \in J_\mathbb{G}$, the next Theorem \ref{th : Zu computation} provides an upper bound for both $Z^\alpha_{u,1}$ and $Z^\alpha_{u,2}$. Then, using \eqref{eq : Zu defnition with Zu1 Zu2}, one can compute a value for $\mathcal{Z}_u$.

\begin{theorem}\label{th : Zu computation}
Let $\alpha \in J_\mG$ and let $E_1, E_2$ be the sequences of Fourier coefficients of the functions $x \mapsto e^{2ad} - e^{2ad}\prod_{i=1}^m\left(1 -e^{-2ad}\cosh(2ax_i)\right)$ and  $x \mapsto \prod_{i=1}^m \cosh(2ax_i)$ on $\om$ respectively. In particular,
\begin{align}\label{eq : coefficients of E1 and E2}
\nonumber
    \left(E_1\right)_n &= e^{2ad}\delta_n - \frac{e^{2ad}}{|\om|}\prod_{k=1}^m\left(2d\delta_{n_k} - \frac{2a(-1)^{n_k}(1-e^{-4ad})}{4a^2 + \tilde{n}_k^2} \right)\\
    \left(E_2\right)_n &= \frac{(2a)^m\left(e^{2ad}-e^{-2ad}\right)^m}{|\om|}\prod_{k=1}^m \frac{(-1)^{n_k}}{4a^2 + \tilde{n}_k^2}
\end{align}
for all $n \in \mathbb{Z}^m$ where $\delta$ is the usual Kronecker delta.
Then, recalling $V_\alpha$ defined in \eqref{eq : valpha has compact support},
\begin{equation}\label{ineq : bound Zu1 in lemma}
     (\mathcal{Z}_{u,1}^\alpha)^2 \leq e^{-2ad}|\om|\frac{C_\alpha^2}{a^m}\left(V_\alpha^*,V_\alpha*E_1\right)_2.
\end{equation}
Moreover, there exists $C(d)>0$ depending algebraically on $d$ and independent of $\alpha$, such that
\begin{equation}\label{ineq : bound on Kalpha in Z1}
   (\mathcal{Z}_{u,2}^\alpha)^2 \leq (\mathcal{Z}_{u,1}^\alpha)^2 + e^{-4ad}C(d)C_\alpha^2|\om| \left(V_\alpha^*,V_\alpha*E_2\right)_2.
\end{equation}
\end{theorem}

\begin{proof}
First, the computation of the coefficients of $E_1$ and $E_2$ is easily obtained through some straightforward integrals of exponentials. Then, let $u \in L^2$ such that $\|u\|_2 =1$ and let $v \bydef v_\alpha^* u$. First, recall that
\begin{equation} \label{eq:decomposition_K_alpha_approx}
     \left\| ((\partial^\alpha\iL)^*-\Gamma^\dagger\left((\partial^\alpha L^{-1})^*\right))v\right\|_2^2 = \left\|\out(\partial^\alpha\iL)^* v\right\|_2^2 + \left\|\cha((\partial^\alpha\iL)^*-\Gamma^\dagger\left((\partial^\alpha L^{-1})^*\right))v\right\|_2^2
\end{equation}
as $\Gamma^\dagger\left((\partial^\alpha L^{-1})^*\right) = \cha \Gamma^\dagger\left((\partial^\alpha L^{-1})^*\right)$ by construction. We begin by focusing our attention on bounding the first term in the right-hand side of \eqref{eq:decomposition_K_alpha_approx}, that is $\|\out(\partial^\alpha\iL)^* v\|_2$. 
Recall that 
\[
((\partial^\alpha\iL)^*v)(x)  = (f_\alpha*v)(x)
\]
for all $x \in \mathbb{R}^m$ by definition of $f_\alpha.$ Then, using Lemma \ref{lem : computation of falpha} and \eqref{eq : valpha has compact support}, we obtain
\begin{align} \label{eq : Zu1alpha_ineq1}
\nonumber
   \|\mathbb{1}_{\R^m \setminus \Omega_0}(\partial^\alpha\iL)^* v\|_2^2
  & = \int_{\R^m \setminus \om} \left(\int_\om f_\alpha(y-x)v_\alpha(x)^*u(x) dx \right)^2dy\\
  & \leq C_\alpha^2 \int_{\R^m \setminus \om} \left(\int_\om e^{-a|y-x|_1}|v_\alpha(x)u(x)| dx \right)^2dy,
\end{align}
where we used supp$(v_\alpha) \subset \overline{\Omega_0}$. From Cauchy-Schwartz inequality and Fubini's theorem,
\begin{align}\label{eq : cauchy S for E2}
\nonumber
\int_{\R^m \setminus \om} \left(\int_\om e^{-a|y-x|_1}|v_\alpha(x)u(x)| dx \right)^2dy
     &\leq  \int_\om |u(x)|^2dx  \int_{\R^m \setminus \om} \int_\om e^{-2a|y-x|_1}|v_\alpha(x)|^2 dx dy\\
     &\leq \int_\om |v_\alpha(x)|^2 \int_{\R^m \setminus \om}  e^{-2a|y-x|_1} dydx
\end{align}
as $\|u\|_2 = 1$ by assumption. Then, let $x \in \om$, we have
\begin{align*}
    \int_{\R^m \setminus \om}  e^{-2a|y-x|_1} dy = \frac{1}{a^m} - \prod_{i=1}^m\int_{-d}^d e^{-2a|y_i-x_i|} dy_i
     = \frac{1}{a^m} - \frac{1}{a^m}\prod_{i=1}^m\left(1 -e^{-2ad}\cosh(2ax_i) \right).
\end{align*}
Therefore, combining \eqref{eq : Zu1alpha_ineq1} and \eqref{eq : cauchy S for E2}, we get
\[
     \|\mathbb{1}_{\R^m \setminus \Omega_0}(\partial^\alpha\iL)^* v\|_2^2 \leq \frac{C_\alpha^2}{a^m} \int_{\om} |v_\alpha(x)|^2\left(1 - \prod_{i=1}^m\left(1 -e^{-2ad}\cosh(2ax_i) \right)\right) dx.
\]
Recall that by hypothesis the vector $E_1$ is the sequence of Fourier coefficients of the function $x \mapsto \displaystyle e^{2ad} - e^{2ad}\prod_{i=1}^m\left(1 -e^{-2ad}\cosh(2ax_i)\right)$ on $\om$, and hence we obtain
\begin{align*}
    \int_{\om} |v_\alpha(x)|^2\left(1 - \prod_{i=1}^m\left(1 -e^{-2ad}\cosh(2ax_i) \right)\right) dx = |\om|e^{-2ad}\left(V_\alpha^*,V_\alpha*E_1\right)_2
\end{align*}
using Parseval's identity. Then, the first term in the right-hand side of \eqref{eq:decomposition_K_alpha_approx} is bounded by
\begin{equation}\label{eq : first term in this lemma}
    \|\mathbb{1}_{\R^m \setminus \Omega_0} (\partial^\alpha\iL)^* v\|_2^2 \leq |\om|e^{-2ad}\frac{C_\alpha^2}{a^m}\left(V_\alpha^*,V_\alpha*E_1\right)_2.
\end{equation}
Let us now find an explicit upper bound for the second term in the right-hand side of \eqref{eq:decomposition_K_alpha_approx}, namely for the term $\|\cha((\partial^\alpha\iL)^*-\Gamma^\dagger\left((\partial^\alpha L^{-1})^*\right))v\|_2$. Let $g \bydef \cha  ((\partial^\alpha\iL)^*-\Gamma^\dagger\left((\partial^\alpha L^{-1})^*\right)) v$. By construction $g \in L^2_\om$, consequently, thanks to Lemma \ref{lem : gamma and Gamma properties}, we can define $(g_n)_{n \in \mathbb{Z}^m} \bydef \gamma(g) \in \ell^2$. In particular, we have $\|g\|_2 = \sqrt{|\om|} \|(g_n)_{n \in \mathbb{Z}^m}\|_2$. Let $n \in \mathbb{Z}^m$, then notice that 
\begin{align}\label{eq : Z1_fourier_computation_1}
\nonumber
  \int_{\om} (\partial^\alpha\iL)^* v(x) e^{-i2\pi\tilde{n}\cdot x}dx &= \int_{\mathbb{R}^m} (\partial^\alpha\iL)^* v(x) e^{-i2\pi\tilde{n}\cdot x}dx - \int_{\mathbb{R}^m\setminus\om} (\partial^\alpha\iL)^* v(x) e^{-i2\pi\tilde{n}\cdot x}dx \\
  &= \frac{(-i2\pi\tilde{n})^{\alpha}\hat{v}(\tilde{n})}{{l(-\tilde{n})}} - \int_{\mathbb{R}^m\setminus\om} (\partial^\alpha\iL)^* v(x) e^{-i2\pi\tilde{n}\cdot x}dx
\end{align}
by definition of $\mathbb{L}$. But as $v=v_\alpha u$ has support on $\Omega_0$ (since supp$(v_\alpha) \subset \overline{\Omega_0}$),  we have 
\[
 \hat{v}(\tilde{n}) = \int_{\Omega_0}v(x) e^{-i2\pi\tilde{n}\cdot x} dx \bydef |\Omega_0| v_n
\]
where $(v_n)_{n \in \mathbb{Z}^m} \bydef \gamma(v)$. 
Therefore, 
\begin{equation}\label{eq : Z1_fourier_computation_2}
    \frac{(-i2\pi\tilde{n})^{\alpha}\hat{v}(\tilde{n})}{|\Omega_0|{l(-\tilde{n})}} = \frac{(-i2\pi\tilde{n})^{\alpha}v_n}{{l(-\tilde{n})}}.
\end{equation}
Now, using the definition of $\Gamma^\dagger$ in \eqref{def : Gamma and Gamma dagger}, we get
\begin{equation}\label{eq : Z1_fourier_computation_3}
    \frac{1}{|\om|} \int_{\om} \Gamma^\dagger\left((\partial^\alpha L^{-1})^*\right) v(x) e^{-i2\pi\tilde{n}\cdot x}dx =\frac{(-i2\pi\tilde{n})^{\alpha}}{{l(-\tilde{n})}|\om|}\int_{\om}  v(x) e^{-i2\pi\tilde{n}\cdot x}dx = \frac{(-i2\pi\tilde{n})^{\alpha}v_n}{{l(-\tilde{n})}}.
\end{equation}

Therefore, combining \eqref{eq : Z1_fourier_computation_1}, \eqref{eq : Z1_fourier_computation_2} and \eqref{eq : Z1_fourier_computation_3}, we obtain
\begin{equation*}
    g_n = -\frac{1}{|\om|} \int_{\mathbb{R}^m\setminus\om} (\partial^\alpha\iL)^* v(x) e^{-i2\pi\tilde{n}\cdot x}dx.
\end{equation*}
Then, denote by $\om + 2dn \bydef (-d+2dn_1,d+2dn_1)\times \cdots \times (-d+2dn_m,d+2dn_m)$ and using Parseval's identity, we obtain
\begin{align*}%\label{eq : Z1_dirac_comb}
   \|\cha  ((\partial^\alpha\iL)^*-\Gamma^\dagger\left((\partial^\alpha L^{-1})^*\right)) v \|_2^2 
   &= |\om| \sum_{n \in \mathbb{Z}^m} |g_n|^2 \\
  \nonumber
    &= \frac{1}{|\om|} \int_{\mathbb{R}^m\setminus\om} \hspace{-.8cm} (\partial^\alpha\iL) v^*(y)  \int_{\mathbb{R}^m\setminus\om} \hspace{-.8cm} (\partial^\alpha\iL)^* v(x) \sum_{n \in \mathbb{Z}^m}e^{i2\pi\tilde{n}\cdot (y-x)} dydx \\ \nonumber
    &= \sum_{n \in \mathbb{Z}^m} \int_{\mathbb{R}^m\setminus\om} \hspace{-.7cm}(\partial^\alpha\iL) v^*(y)\int_{\mathbb{R}^m\setminus\om} \hspace{-.7cm}  (\partial^\alpha\iL)^* v(x) \delta(y-x - 2dn) dydx \\ 
    &= \sum_{n \in \mathbb{Z}^m} \int_{\mathbb{R}^m\setminus (\om \cup (\om +2dn))}\hspace{-.4cm}(\partial^\alpha\iL)  v^*(y)(\partial^\alpha\iL)^*  v(y-2dn) dy,
\end{align*}
where we used that $ \displaystyle  \frac{1}{|\om|} \sum_{n \in \mathbb{Z}^m}e^{i2\pi\tilde{n}\cdot (y-x)} = \sum_{n \in \mathbb{Z}^m}\delta (y-x - 2dn)$, which is the Dirac comb.
Let $n \in \mathbb{Z}^m$, then using \eqref{eq:f_alpha_convolution} and Lemma \ref{lem : computation of falpha} we get
\begin{align*}
\nonumber
&
\hspace{-1cm}
\left|\int_{\mathbb{R}^m\setminus (\om \cup (\om +2dn))} \left((\partial^\alpha\iL)^* v(y)\right)^* (\partial^\alpha\iL)^*  v(y-2dn)   dy\right|\\ \nonumber
= & ~  \left|\int_{\mathbb{R}^m\setminus (\om \cup (\om +2dn))} \left( \int_{\om} f_\alpha(y-x)^*v(x)^*dx \right) \left( \int_{\om}  f_\alpha(y-2dn-z)v(z)dz  \right) dy\right|\\ \nonumber
\leq & ~ C_\alpha^2\int_{\mathbb{R}^m\setminus (\om \cup (\om +2dn))} \int_{\om}\int_{\om} e^{-a|y-x|_1}e^{-a|y-2dn-z|_1}|v(x)v(z)|dz dx   dy\\
= & ~ C_\alpha^2\int_{\om}\int_{\om} |v(x)v(z)|\left(\int_{\mathbb{R}^m\setminus (\om \cup (\om +2dn))} e^{-a|y-x|_1}e^{-a|y-2dn-z|_1}dy\right) dz dx 
\end{align*}
using Fubini's theorem. Then, notice that if $n =0$, we have
\begin{align*}
    \int_{\mathbb{R}^m\setminus (\om \cup (\om +2dn))}\left((\partial^\alpha\iL)^* v(y)\right)^* (\partial^\alpha\iL)^*  v(y-2dn)   dy 
    =  \|\mathbb{1}_{\R^m \setminus \Omega_0}(\partial^\alpha\iL)^* v\|_2^2,
\end{align*}
which is the quantity we computed in \eqref{eq : first term in this lemma}.
Moreover, one can prove that there exists a constant $C(d)>0$, depending on $d$ algebraically and independent of $\alpha$, such that
\begin{align}\label{eq : quantity needed for Z1}
    \sum_{n \in \mathbb{Z}^m, n \neq 0}\int_{\mathbb{R}^m\setminus (\om \cup (\om +2dn))} e^{-a|y-x|_1}e^{-a|y-2dn-z|_1}dy \leq C(d)e^{-4ad}\prod_{i=1}^m \cosh(ax_i)\cosh(az_k)
\end{align}
using some standard integration techniques of exponentials. Therefore, we obtain
\begin{align*}
    & \hspace{-1cm}
\sum_{n \in \mathbb{Z}^m} \int_{\mathbb{R}^m\setminus (\om \cup (\om +2dn))}\left((\partial^\alpha\iL)^* v(y)\right)^*(\partial^\alpha\iL)^*  v(y-2dn)   dy \\
    \leq & ~ \|\mathbb{1}_{\R^m \setminus \Omega_0}(\partial^\alpha\iL)^* v\|_2^2 + e^{-4ad}C(d)C_\alpha^2\int_{\om}\int_{\om} |v(x)v(z)|\prod_{i=1}^m \cosh(ax_i)\cosh(az_k) dz dx \\
    = & ~ \|\mathbb{1}_{\R^m \setminus \Omega_0}(\partial^\alpha\iL)^* v\|_2^2 + e^{-4ad}C(d)C_\alpha^2\left(\int_{\om} |v(x)|\prod_{i=1}^m \cosh(ax_i) dx \right)^2.
\end{align*}
Then, recall that $v \bydef v_\alpha^* u$ and using Cauchy Schwartz inequality, we get
\begin{align*}
  \left(\int_{\om} |v(x)|\prod_{i=1}^m \cosh(ax_i) dx \right)^2 \leq \|u\|^2_2\int_{\om} |v_\alpha(x)|^2\prod_{i=1}^m \cosh(ax_i)^2 dx.
\end{align*}
Finally, using that $\cosh(x)^2 \leq \cosh(2x)$ for all $x \in \R$, we obtain
\begin{align*}
    & \hspace{-1.5cm} \sum_{n \in \mathbb{Z}^m} \int_{\mathbb{R}^m\setminus (\om \cup (\om +2dn))}\left((\partial^\alpha\iL)^* v(y)\right)^*(\partial^\alpha\iL)^*  v(y-2dn)   dy\\
    \leq &~ \|\mathbb{1}_{\R^m \setminus \Omega_0}(\partial^\alpha\iL)^* v\|_2^2 + e^{-4ad}C(d)C_\alpha^2\int_{\om} |v_\alpha(x)|^2\prod_{i=1}^m \cosh(2ax_i) dx.
\end{align*}
We conclude the proof using the definition of $E_2$ and  Parseval's identity (cf. \eqref{eq : first term in this lemma}).
\end{proof}

\begin{remark}
We illustrate the computation of the constant $C(d)$ in the case of the Kawahara equation in Section \ref{sec:kawahara}.
Once $C(d)$  is determined theoretically, then the upper bound given in \eqref{ineq : bound on Kalpha in Z1} can easily be obtained numerically using the Fourier coefficients  $E_1$ and $E_2$ given in \eqref{eq : coefficients of E1 and E2}.
Then, assuming that $V_\alpha$ only has a finite number of non-zero coefficients, then we only need a finite number of coefficients of $E_1$ and $E_2$ to compute $(V_\alpha, V_\alpha*E_i)$ ($i \in \{1, 2\}$).
\end{remark}

\begin{remark}
Notice from \eqref{def : Zu1Zu2} that $\mathcal{Z}_{u,2}^\alpha \approx \mathcal{Z}_{u,1}^\alpha $ if $d$ is sufficiently big and if $C(d)$ is computed accurately. Therefore, heuristically, if one chooses $d$ sufficiently big such that \\$\sqrt{2}\max\{1, \|B\|_2\}\sum_{\alpha \in J_\mathbb{G}}\mathcal{Z}_{u,1}^\alpha \ll 1,$ then using \eqref{def : definition of Z1 and mathcal Z11}, one should obtain $\mathcal{Z}_u \ll 1$.
\end{remark}

\section{Existence of solutions via computer-assisted analysis}\label{sec : computer assisted analysis}

In this section, we present a method to prove constructively the existence of solutions of the PDE \eqref{eq : f(u)=0 on S} in the space $H^l$. This approach begins with the construction of two important objects, namely an approximate solution $u_0 \in H^l_\om$ and a linear operator $\A : L^2 \to H^l$ which is an approximate inverse of $D\mathbb{F}(u_0)$. Once these two objects are constructed, the (computer-assisted) proof of existence is obtained by showing that the Newton-like fixed point operator $\mathbb{T}:H^l\to H^l$ defined as $\mathbb{T}(u) \bydef u - \A D\F(u) $ possesses a unique fixed point in $\overline{B_r(u_0)}$, where $B_r(u_0)$ is the open ball of radius $r$ centered at $u_0$. This is achieved by verifying the hypotheses of a Newton-Kantorovich approach (see Section \ref{ssec : radii polynomial}).
In particular, the crucial hypothesis \eqref{condition radii polynomial} to verify depends on three bounds $\mathcal{Y}_0$, $\mathcal{Z}_1$ and $\mathcal{Z}_2$ that need to be computed explicitly. Using the construction of $\A$ presented in details in Section~\ref{sec:bound_inverse}, we present in Section~\ref{ssec : bounds pde case} the construction of such bounds using Fourier series objects. We begin this section by presenting the (computer-assisted) construction of the approximate solution $u_0 \in H^l_\om.$

\subsection{Construction of the approximate solution \boldmath$u_0 \in H^l_\om$\unboldmath}\label{ssec : construction of u0}

Assume we have an approximation to \eqref{eq : f(u)=0 on X^l} given by $\tilde{U}_0 \in X^l$ such that $\tU = \pi^{N_0} \tU$ (for some $N_0 \in \mathbb{N}$). In practice, $\tilde{U}_0$ is obtained numerically by solving $\pi^{N_0}F(\pi^{N_0}U)=0$, which is a finite dimensional projection of \eqref{eq : f(u)=0 on X^l}. Using Lemma \ref{lem : gamma and Gamma properties}, $\tU$ represents a function $\tilde{u}_0 \bydef \gdag\left(\tU\right) \in L^2_\om$. However,  $\tilde{u}_0$ does not a priori have regularity and $\tilde{u}_0 \notin H^l_\om$ in general. Consequently, one needs to build $u_0 \in H^l_\om$, represented by a Fourier series $U_0 \in X^l$, such that $U_0$ is close to $\tU$ in $X^l$. Moreover, $U_0$ needs to be represented on the computer, so we also require $U_0 = \pi^{N_0} U_0$. Equivalently, we want to project $\tU$ into the set of vectors of size $N_0$ in $X^l$ representing functions in $H^l_\om$. This construction is achieved using some trace theory on hypercubes, that we introduce next.

% Denote by $\{\omega_1^+,\dots,\omega_{m}^+\} \bigcup \{\omega_1^-,\dots,\omega_{m}^-\}$ the $2m$ faces composing the surface of $\Omega_0$ defined in \eqref{eq:definition_Omega_0}. In particular, we have 
% %
% \begin{equation}\label{def : face wi}
%     \omega_i^{\pm} \bydef (-d_1,d_1)\times \cdots \times (-d_{i-1},d_{i-1}) \times \{\pm d_i\} \times (-d_{i+1},d_{i+1}) \times \cdots  \times (-d_m,d_m).  
% \end{equation}

% The trace operator on $H^s(\Omega_0)$ (for $s \in \mathbb{N}$) can be defined in the following theorem.

% \begin{theorem}[\bf Trace operator]\label{thm:trace_operator}
% There exists an operator 
% \vspace{-.2cm}
% \[
% \mathcal{T}_s = (\mathcal{T}^+_{i,0}, \mathcal{T}^-_{i,0},\dots,\mathcal{T}^+_{i,s-1},\mathcal{T}^-_{i,s-1})_{1\leq i \leq m} : H^s(\Omega_0) \to \prod_{i=1}^{m}\prod_{j=0}^{s-1} H^{s-j-\frac{1}{2}}(\omega_i^+) \times H^{s-j-\frac{1}{2}}(\omega_i^-)
% \vspace{-.2cm}
% \]
% on $H^s(\Omega_0)$, called the {\em trace operator}, which is continuous and satisfies
% \[
% \vspace{-.2cm}
% \mathcal{T}_s(u) = \left( u|_{\omega_i^+},u|_{\omega_i^-},\dots,\frac{\partial^{s-1}u}{\partial n^{s-1}}|_{\omega_i^+},\frac{\partial^{s-1}u}{\partial n^{s-1}}|_{\omega_i^-} \right)_{1\leq i\leq m}
% \vspace{-.2cm}
% \]
% for all $u \in C^\infty(\Omega_0)$,  where $\frac{\partial}{\partial n}$ is the normal derivative.
% \end{theorem}
 
 \subsubsection{Trace theorem for periodic functions}
 The idea is to first build $u_{\Omega_0}$, the restriction of $u_0$ on $\Omega_0$, such that $u_{\Omega_0} \in H^l_{per}(\Omega_0)$ and such that $u_{\Omega_0}$ has a null trace of order $k$ (see \cite{mclean2000strongly} for the definition of the trace on Sobolev spaces). Then, if $k$ is big enough, by extending $u_{\Omega_0}$ by zero outside of $\Omega_0$, we obtain a smooth $u_0$ with support on $\Omega_0$. Lemma \ref{lem:ext_zero} below justifies such a reasoning. Note that $H^k_0(\Omega_0)$ denotes the set of {\em trace-free elements} in $H^k(\Omega_0)$ (see \cite{adams2003sobolev} or \cite{mclean2000strongly} for a complete presentation). Consider the closed subspace $H^k_{0,per}(\Omega_0) \bydef H^k_0(\Omega_0) \bigcap H^k_{per}(\Omega_0) $ and denote by $X^k_0$ the correspondence of $H^k_{0,per}(\Omega_0)$ in sequence space.

Define  $\tilde{X}^k$ as the equivalent of $X^k$ for sequences indexed on $\mathbb{Z}^{m-1}$ and define $\mathbb{X}^k$ as follows
 \vspace{-.2cm}
\[
\tilde{X}^k \bydef \left\{ (u_n)_{n \in \mathbb{Z}^{m-1}} : \sum_{n \in \mathbb{Z}^{m-1}} |u_n|^2 (1+|\tilde{n}|^2)^k < \infty \right\} ~~\text{ and }~~\mathbb{X}^k \bydef \prod_{i=1}^{m}\prod_{j=0}^{k-1}\tilde{X}^{k-j-\frac{1}{2}},
\vspace{-.2cm}
\]
where $\tilde{n}$ is defined in Definition \ref{def: tilde index}. Then, a continuous trace operator $\mathcal{T}_k : X^k \to \mathbb{X}^k$ can be defined by
\vspace{-.2cm}
\begin{equation}\label{eq : formula_trace}
\mathcal{T}_k \bydef \prod_{j=0}^{k-1}\mathcal{T}^{1,j} \times \cdots \times \prod_{j=0}^{k-1}\mathcal{T}^{m,j}, ~~ \text{where }  (\mathcal{T}^{i,j}(U))_n \bydef \displaystyle\sum_{p \in \mathbb{Z}}(-1)^p\big(\frac{\pi p}{d}\big)^j u_{(n_1,\dots,n_{i-1},p,n_{i+1},...,n_m)} 
\vspace{-.2cm}
\end{equation}
 for all $n = (n_1,...,n_{i-1},n_{i+1},...,n_m) \in \mathbb{Z}^{m-1}$. Note that $\prod_{j=0}^{k-1}\mathcal{T}^{i,j}$ represents the trace component on the $i^{th}$ ($m-1$)-cube of $\partial\Omega_0$. In fact, the periodicity enables to simplify the definition of the trace operator and to only consider half the faces of $\Omega_0.$

% For $m =1$, $\mathcal{T}_k(X^k)$ is finite-dimensional (it is a $k$-dimensional vector defined component-wise as the pointwise evaluations at $d$ of the $k$ first derivatives) so it is clearly closed. For $m>1$, the image of the trace operator is infinite dimensional and its closedness becomes less straightforward. Indeed, for $m=2$ and $m=3$, \cite{trace_polynomials} gives a characterization of the image of the trace of polygons and polyhedrons and showed that the trace operator is closed, under some compatibility conditions. Hence for $m \leq 3$ $X^k_0(\Omega_0) = R(\mathcal{T}_k^*)^{\perp}$ and the trace-theorem holds in the periodic case for $\mathcal{T}^*_k$ defined with the compatibility conditions given in \cite{trace_polynomials}. In particular, one can (in theory) build an orthonormal basis for $R(\mathcal{T}^*_k)$ to define an orthogonal projection on $X^k_0$. This construction is not trivial and would require to solve an infinite dimensional linear problem rigorously. 
% In the general setting for $m >3$ we are not aware of any generalization of the previous results. From that perspective, we choose an alternative path and develop a finite dimensional trace theorem. We expose this result in the following section.

\subsubsection{Finite dimensional trace theorem}\label{ssec : finite dimensional trace theorem}

 Let $k \in \mathbb{N}$ and let  $N_0  \in \mathbb{N}$. Recall that we want to build $U_0 \in X^k_0$ such that $U_0 = \pi^{N_0}U_0$ and $\|\Tilde{U}_0-U_0\|_l$ is small.

First, define  $\mathcal{T}^{N_0}_k$ as $\mathcal{T}^{N_0}_k(U)\bydef \mathcal{T}_k(\pi^N(U))$ for all $U \in X^k$. In fact, $\mathcal{T}^{N_0}_k$ can be seen as a finite dimensional operator $\mathcal{T}^{N_0}_k : \mathbb{C}^{(2N+1)^m} \to \mathbb{C}^{mk(2N+1)^{m-1}}$  that can be represented by an $mk(2N+1)^{m-1}$ by $(2N+1)^m$ matrix, that we denote $\mathbb{M}$. Now, we define $X^{N_0,k}_0 \bydef Ker(\mathcal{T}^{N_0}_k)$ and obtain that
$(X^{N_0,k}_0)^\perp = R(\mathbb{M}^*)$.
This equality is an abuse of notation as elements in $R(\mathbb{M}^*)$ are finite dimensional vectors. We need to pad with zeros these elements for the equality to be rigorous.
In particular $X^{N_0,k}_0 \subset X^k_0$.
Now, a characterization of different projections on $X^{N_0,k}_0$, depending on some invertible matrices $D$,  can be obtained. Again we abuse notation in the next Theorem \ref{th:trace_theorem} by considering elements $U \in X^l$ such that $\pi^N(U)=U$ as vectors in $\mathbb{C}^{(2N+1)^m}$.
\begin{theorem}\label{th:trace_theorem}
Suppose $Rank(\mathbb{M}^*) = r >0$ and $M$ is a $r$ by $(2N+1)^m$ matrix such that $R(\mathbb{M}^*) = R(M^*)$, then if $D$ is an invertible matrix of size $(2N+1)^m$ 
\vspace{-.2cm}
\begin{equation}\label{eq : projection in X^k_0}
    P^N_k(U) = U - DM^*(MDM^*)^{-1}M\pi^N(U) \in X^{N_0,k}_0
    \vspace{-.2cm}
\end{equation}
for all $U \in X^l$ such that $\pi^N(U)=U$.
\end{theorem}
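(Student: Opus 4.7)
The plan is to reduce the claim to the fundamental theorem of linear algebra applied to the matrix $\mathbb{\Gamma}$. Identifying any $U \in X^l$ with $\pi^N(U) = U$ with a vector in $\mathbb{C}^{(2N+1)^m}$, what we must show is that $\mathbb{\Gamma} P^N_k(U) = 0$, since this is exactly the statement $\Gamma^N_k(P^N_k(U)) = 0$, i.e. $P^N_k(U) \in X^{N,k}_0$. (Note that $P^N_k(U)$ is automatically supported in $I^N$: the columns of $M$ lie in $R(\mathbb{\Gamma}^*) \subset \mathbb{C}^{(2N+1)^m}$, so $DM(M^*DM)^{-1}M^*U$ is also supported in $I^N$, and hence $\pi^N(P^N_k(U)) = P^N_k(U)$.)

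First I would recall the orthogonal decomposition $\ker(\mathbb{\Gamma}) = R(\mathbb{\Gamma}^*)^{\perp}$, which is the standard range–kernel identity for the adjoint of a matrix with respect to the Hermitian inner product on $\mathbb{C}^{(2N+1)^m}$. By hypothesis, $R(\mathbb{\Gamma}^*) = R(M)$, so
\begin{equation*}
\ker(\mathbb{\Gamma}) = R(M)^{\perp} = \ker(M^*),
\end{equation*}
where the last equality is again a standard consequence of $\langle Mw, v\rangle = \langle w, M^*v\rangle$. Thus it suffices to check that $M^* P^N_k(U) = 0$.

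The computation is then immediate from the definition of $P^N_k(U)$:
\begin{equation*}
M^* P^N_k(U) = M^* U - M^* D M (M^* D M)^{-1} M^* U = M^* U - M^* U = 0,
\end{equation*}
using only that $M^*DM$ is invertible (which is implicit in the statement, since its inverse appears in the formula). Consequently $P^N_k(U) \in \ker(M^*) = \ker(\mathbb{\Gamma})$, which is exactly $P^N_k(U) \in X^{N,k}_0$.

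I do not anticipate a genuine obstacle: the whole argument is a one-line algebraic identity combined with the range–kernel duality. The only thing requiring a bit of care is the notational bookkeeping between finite vectors in $\mathbb{C}^{(2N+1)^m}$ and sequences in $X^l$, and the tacit invertibility of $M^*DM$ (one might wish to mention that a sufficient condition is that $D$ be Hermitian positive definite, in which case $M^*DM$ is invertible as soon as $M$ has full column rank $r$, which holds here by construction of $M$).
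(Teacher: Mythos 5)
Your proof is correct; the paper states Theorem \ref{th:trace_theorem} without giving a proof, and your argument---the range--kernel duality $\ker(\mathbb{\Gamma}) = R(\mathbb{\Gamma}^*)^{\perp} = R(M)^{\perp} = \ker(M^*)$ combined with the one-line cancellation $M^*P^N_k(U) = M^*U - M^*DM(M^*DM)^{-1}M^*U = 0$, plus the observation that $P^N_k(U)$ stays supported on $I^N$---is exactly the intended, standard argument. Your caveat about $M^*DM$ is also apt: invertibility of $D$ alone does not guarantee it (so the theorem statement is slightly imprecise as written), but the paper's practical choice of a positive-definite diagonal $D$ in Remark \ref{re:projection}, together with $M$ having full column rank $r$, does.
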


\begin{remark}\label{re:projection}
In the above Theorem \ref{th:trace_theorem}, the choice of the matrix $D$ allows to choose the rate of decay of $U$. In particular, $P^N_k(U)$ minimizes $\|D^{-1}(U-V)\|_2$ with $V \in X^k$ such that $\pi^N(V) = V$. A possible choice for  $D$ is therefore to chose the diagonal matrix with entries $(\frac{1}{l(\tilde{n})})_n$ on the diagonal. One could potentially play with the matrix $D$ to optimize the computation of the bounds introduced in Theorem \ref{th: radii polynomial}. Moreover, in practice we assume $2N+1 > mk$. Indeed, as $\mathbb{M}$ is a $mk(2N+1)^{m-1}$ by $(2N+1)^m$ matrix, assuming $2N+1 > mk$ allows the projection on $X^{N_0,k}_0$ to be non-trivial. 
\end{remark}

\begin{remark}
    In practice, one has to compute explicitly the rank of $\mathbb{M}^*$ in order to build $M$. This computation can be obtained thanks to some classical results on trace theory on hypercubes. In particular, \cite{trace_polynomials} provides that the size of the rank can be obtained by computing the number of required compatibility conditions. Once $r$ is known, $mk(2N+1)^{m-1} - r$ rows of $\mathbb{M}$ can be removed. The choice of these rows can be obtained and justified numerically. Indeed, one can validate that $M^*$ has a rank $r$ by verifying the existence of the inverse of $MM^*$  using interval arithmetic.
\end{remark}

\subsubsection{Construction of functions in \boldmath$H^l_\om$\unboldmath}\label{ssec : construction of u0 in Hlom}

We go back to the initial goal of this section which was to build functions in $H^l_\om$ starting from a vector of Fourier series.
Suppose that we have access to some approximation $\tilde{u}_0 \in L^2_\om$, where we denote $\tilde{U}_0 \bydef (a_n)_{n \in \mathbb{Z}}$ the Fourier coefficients of $\tilde{u}_0$. In particular we assume that $\pi^{N_0}\tilde{U}_0 = \tilde{U}_0$ and we want to build a smooth function $u_0 \in H^l$ from $\tilde{u}_0 \in L^2_\om.$ This construction is achieved combining the next Lemma \ref{lem:ext_zero} from \cite{mclean2000strongly} and the construction \eqref{eq : projection in X^k_0}.
    \begin{lemma}\label{lem:ext_zero}
    Let $\widetilde{H}^k(\Omega_0) \bydef \{u \in L^2(\Omega_0) : \cha u \in H^k(\mathbb{R}^m)\}$. 
Then $\widetilde{H}^k(\Omega_0) = H^k_0(\Omega_0)$.
\end{lemma}
    The previous Lemma \ref{lem:ext_zero} provides that a smooth extension by zero can be obtained for functions in $H^k_0(\Omega_0)$. Consequently, if $k$ is chosen big enough so that $H^l \subset H^k(\R^m)$, then the construction \eqref{eq : projection in X^k_0} allows to build $u_0 \in H^l_\om$.  Let us fix $k \in \mathbb{N}$ that satisfies the aforementioned requirement.
    
   Now let $D$ be the matrix described in Remark~\ref{re:projection}, then let $U_0 \bydef P_{k}^{N_0}(\tilde{U}_0) \in X^{N_0,k}_0 $ as in Theorem~\ref{th:trace_theorem}. The finite dimensional vector $U_0$ has a function representation $u_0$ in $H^k_{per,0}(\Omega_0)$ and by Lemma~\ref{lem:ext_zero} $u_0$ has a zero extension to a function in $H^k(\R^m) \subset H^l$. To keep our notations, we still denote $u_0 \in H^k(\R^m) \subset H^l$ the zero extension on $\mathbb{R}^m$. By construction, we obtain $u_0 \bydef \gamma^\dagger\left(U_0\right) \in H^l_\om$.

\begin{remark}
Using some rigorous numerical interval arithmetic (such as IntervalArithmetics.jl \cite{julia_interval} on Julia), Theorem \ref{th:trace_theorem} provides a way to  obtain trace-free elements numerically. Therefore the approximate solution $u_0 = \gamma^\dagger\left(U_0\right) \in H^l_\om$ will be defined by its finite number of Fourier coefficients using the construction \eqref{eq : projection in X^k_0}.
 \end{remark}

\subsection{Newton-Kantorovich approach}\label{ssec : radii polynomial}

 As explained in the previous Section~\ref{ssec : construction of u0 in Hlom}, we now have a way to construct an approximate solution $u_0 \in H^l_\om$ (via finite Fourier series in $X^l$) and thanks to Theorem~\ref{th : approximation of inverse and norm}, we can construct an approximate inverse $\mathbb{A}$ of $D\mathbb{F}(u_0)$ using a Fourier series representation. We have now all the necessary ingredients to use a Newton-Kantorovich approach, which we 
present in a general setting, and which can either be applied to the PDE case described in Section~\ref{sec:bound_inverse} or to the constrained PDE case in Section~\ref{sec:constraigned}.

\begin{theorem}[\bf Newton-Kantorovich Theorem] \label{th: radii polynomial}
Let $X,Y$ be Banach spaces. Moreover, let $\mathbb{F} : X\to Y$ be a Fréchet differentiable map and let $x \in X$. Then, let $\mathbb{A} : Y \to X$ be an injective bounded linear operator. Let $\mathcal{Y}_0, \mathcal{Z}_1$ be non-negative constants and $\mathcal{Z}_2 : (0, \infty) \to [0,\infty)$ be a non-negative function such that
  \begin{align}
    \|\mathbb{A}\mathbb{F}(x)\|_X \leq &\mathcal{Y}_0\label{def : Y0}\\
    \|I_d - \mathbb{A}D\mathbb{F}(x)\|_{X} \leq &\mathcal{Z}_1\label{def : Z1}\\
    \|\mathbb{A}\left({D}\mathbb{F}(v) - D\mathbb{F}(x)\right)\|_X \leq &\mathcal{Z}_2(r)r, ~~ \text{for all } v \in \overline{B_r(x)} \text{ and all } r>0.\label{def : Z2}
\end{align}  
If there exists $r>0$ such that
\begin{equation}\label{condition radii polynomial}
    \frac{1}{2}\mathcal{Z}_2(r)r^2 - (1-\mathcal{Z}_1)r + \mathcal{Y}_0 < 0 \text{ and } \mathcal{Z}_1 + \mathcal{Z}_2(r)r < 1,
 \end{equation}
then there exists a unique $\tilde{x} \in \overline{B_r(x)} \subset X$ such that $\mathbb{F}(\tilde{x})=0$, where $B_r(x)$ is the open ball of $X$ centered at $x$. 
\end{theorem}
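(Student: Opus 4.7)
The strategy is the classical Newton-Kantorovich argument: define the Newton-like operator $\mathbb{T}: X \to X$ by $\mathbb{T}(u) \bydef u - \mathbb{A}\mathbb{F}(u)$, observe that fixed points of $\mathbb{T}$ coincide with zeros of $\mathbb{F}$ (thanks to injectivity of $\mathbb{A}$), and invoke the Banach fixed-point theorem on $\overline{B_r(x)}$.

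For the self-mapping property, I would start from Taylor's theorem with integral remainder,
\begin{align*}
\mathbb{T}(u) - x = -\mathbb{A}\mathbb{F}(x) + (I_d - \mathbb{A}D\mathbb{F}(x))(u-x) - \mathbb{A}\int_0^1 \bigl[D\mathbb{F}(x + t(u-x)) - D\mathbb{F}(x)\bigr](u-x)\,dt,
\end{align*}
and apply the triangle inequality together with \eqref{def : Y0}, \eqref{def : Z1}, and \eqref{def : Z2} (noting that $x + t(u-x) \in \overline{B_r(x)}$ and $\|x + t(u-x) - x\|_X = t\|u-x\|_X \leq tr$) to obtain
\begin{align*}
\|\mathbb{T}(u) - x\|_X \leq \mathcal{Y}_0 + \mathcal{Z}_1 r + \tfrac{1}{2}\mathcal{Z}_2(r) r^2 < r,
\end{align*}
where the final strict inequality is exactly hypothesis \eqref{condition radii polynomial}. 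Hence $\mathbb{T}(\overline{B_r(x)}) \subset \overline{B_r(x)}$.

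For the contraction, writing $\mathbb{T}(u) - \mathbb{T}(v) = \int_0^1 [I_d - \mathbb{A}D\mathbb{F}(v+t(u-v))](u-v)\,dt$ for $u,v \in \overline{B_r(x)}$, splitting the integrand through $D\mathbb{F}(x)$ and bounding via \eqref{def : Z1}, \eqref{def : Z2}, and the convexity of the ball, one arrives at a Lipschitz estimate whose constant is strictly less than $1$. Banach's fixed-point theorem then produces a unique fixed point $\tilde x \in \overline{B_r(x)}$, and injectivity of $\mathbb{A}$ gives $\mathbb{F}(\tilde x) = 0$.

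The delicate point is extracting a strict contraction constant. The direct estimate only yields the Lipschitz bound $\mathcal{Z}_1 + \mathcal{Z}_2(r) r$, whereas \eqref{condition radii polynomial} combined with $\mathcal{Y}_0 \geq 0$ immediately gives only $\mathcal{Z}_1 + \tfrac{1}{2}\mathcal{Z}_2(r) r < 1$. I would sidestep this by first applying Banach on the slightly smaller ball $\overline{B_{r_*}(x)}$, where $r_* \in (0, r]$ is the smallest positive root of the radii polynomial and where a derivative-sign analysis at $r_*$ yields $\mathcal{Z}_1 + \mathcal{Z}_2(r_*) r_* < 1$, and then arguing that any fixed point in $\overline{B_r(x)}$ must in fact lie in $\overline{B_{r_*}(x)}$ thanks to the strict self-map inequality, so that uniqueness upgrades from the smaller ball to the entire $\overline{B_r(x)}$.
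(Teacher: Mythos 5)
Your proposal is correct and follows the same contraction-mapping strategy that the paper relies on: the paper gives no argument of its own beyond citing \cite{periodic_navier_spontaneous} and asserting that $\mathbb{T}(u)=u-\mathbb{A}\mathbb{F}(u)$ maps $\overline{B_r(x)}$ into itself and is contracting under \eqref{condition radii polynomial}. You have also correctly identified and resolved the one genuine subtlety that the paper's one-line remark glosses over: \eqref{condition radii polynomial} only yields $\mathcal{Z}_1+\tfrac12\mathcal{Z}_2(r)r<1$ directly, and the standard repair is exactly yours, namely to work on $\overline{B_{r_*}(x)}$ with $r_*$ the smallest root of $p(s)\bydef\tfrac12\mathcal{Z}_2(r)s^2-(1-\mathcal{Z}_1)s+\mathcal{Y}_0$ (where $p(r)<0$ forces $p'(r_*)<0$, hence $\mathcal{Z}_1+\mathcal{Z}_2(r)r_*<1$), and then to note that any fixed point $u\in\overline{B_r(x)}$ satisfies $p(\|u-x\|)\geq 0$ and therefore lies in $\overline{B_{r_*}(x)}$, which upgrades uniqueness to the whole ball $\overline{B_r(x)}$.
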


\begin{proof}
The proof of the theorem can be found in \cite{periodic_navier_spontaneous}. In particular, the map $\mathbb{T} \colon \overline{B_r(x)} \to \overline{B_r(x)}$ defined by $T(u) = u - \mathbb{A} \mathbb{F}(u)$ is well-defined and contracting if \eqref{condition radii polynomial} is satisfied.
\end{proof}

\begin{remark}\label{rem : Z1 < 1 remark}
    Notice that if $r>0$ satisfies \eqref{condition radii polynomial}, then $(1 - \mathcal{Z}_1)r >  \frac{1}{2}\mathcal{Z}_2(r)r^2 + \mathcal{Y}_0 > 0$ and therefore $1 - \mathcal{Z}_1>0$. Consequently, the assumption that $\mathbb{A}$ is injective becomes redundant in the set-up presented in Section \ref{ssec : upper bound for the norm of the inverse} (see Theorem \ref{th : approximation of inverse and norm}).
\end{remark}

\subsection{Computation of the bounds}\label{ssec : bounds pde case}
We now expose the set-up to apply Theorem \ref{th: radii polynomial} to the PDE \eqref{eq : f(u)=0 on H^l}. In other words, we choose $X= H^l$, $Y=L^2$, and we study $\mathbb{F} : H^l \to L^2$ defined in \eqref{eq : f(u)=0 on H^l}, which, under Assumptions \ref{ass:A(1)} and \ref{ass : LinvG in L1}, is smooth on $H^l$. We present explicit computations for the bounds $\mathcal{Y}_0$, $\mathcal{Z}_1$ and $\mathcal{Z}_2$ using the constructions of $\A$ in Section~\ref{sec:bound_inverse} and of $u_0$ in Section~\ref{ssec : construction of u0}. In particular, we derive upper bounds that only require the norm computation of finite dimensional objects. Such norms can then be obtained on a computer using the arithmetic on intervals (cf. \cite{Moore_interval_analysis} and \cite{julia_interval}). Consequently, once $\A$ and $u_0 \in H^l_\om$ are obtained, one can verify the hypotheses of Theorem \ref{th: radii polynomial} on a computer to prove the existence of a solution to \eqref{eq : f(u)=0 on H^l}.    

We fix some $N_0, N \in \mathbb{N}$, where $N_0$ represents the numerical size of the sequences (that is $U = \pi^{N_0}U$ for all the sequences $U$ we consider numerically) and $N$ represents the numerical size of the operators (that is $M = \pi^N M \pi^N$ for all the operators on Fourier series we consider numerically).  Using the construction of Section~\ref{ssec : construction of u0} and \eqref{eq : projection in X^k_0}, from a numerical approximation $\tU$ of $\pi^{N_0}F(\pi^{N_0}U)=0$, one can build an approximate solution $u_0 \in H^l_\om$ represented by its Fourier series $U_0 \in X^l$. Moreover, $U_0 = \pi^{N_0}U_0$ only has a finite number of non-zero coefficients.

   To apply Theorem \ref{th: radii polynomial} requires the construction of $\mathbb{A}$. This is achieved using the construction of Section \ref{ssec : construction of the approximate inverse}. Indeed, we define $\mathbb{A} \bydef \mathbb{L}^{-1}\left(\out + \Gamma\left(\pi_N + B^N\right)\right) : L^2\to H^l$, as in \eqref{def : definition of mathbb A}, where $B^N = \pi^N B^N \pi^N$ is chosen numerically.
   
   We first take care of the bound $\mathcal{Z}_1$. This bound is decomposed into a first part $\mathcal{Z}_u$, that we compute thanks to Theorems \ref{th : approximation of inverse and norm} and \ref{th : Zu computation}, and a second part $Z_1$ that is based on Fourier series quantities. In particular, $Z_1$ can be computed numerically thanks to vectors and matrices norms. We summarize the obtained results in the following lemma.

   \begin{lemma}[\bf Bound \boldmath$\mathcal{Z}_{1}$\unboldmath]\label{lem : bound Z1}
  Let $Z^N_1$ and $Z_1$ be such that
\begin{align}
   \left\|\pi^N - B^N\left(I_d + DG(U_0)L^{-1}\right)\pi^{N_\mathbb{G}N}\right\|_2^2 + \left\|(\pi^{N_\mathbb{G}N}-\pi^N) DG(U_0)L^{-1}\pi^N\right\|_2^2 & \le (Z_1^N)^2 \label{def : Z1N definition}\\
    \left((Z_1^N)^2 + \left(\sum_{\alpha \in J_\G}\|V_\alpha\|_1\max_{n \in \mathbb{Z}^m\setminus I^N}\frac{|(2\pi n)^\alpha|}{|l(\tilde{n})|}\right)^2\right)^{\frac{1}{2}}  &\leq Z_1 \label{def : Z1 definition in Lemma}.
\end{align}
In particular, $\|I_d - ADF(U_0)\|_l \leq Z_1$. Moreover, let $\mathcal{Z}_u$ satisfying \eqref{eq : Zu defnition with Zu1 Zu2}. If $\mathcal{Z}_1 \bydef Z_1 + \mathcal{Z}_u$, then
\[\|I_d - \mathbb{A}D\mathbb{F}(u_0)\|_l \leq \mathcal{Z}_1.\]
   \end{lemma}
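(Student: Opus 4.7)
The plan is to invoke Theorem~\ref{th : approximation of inverse and norm}, so the whole task reduces to showing that the quantity in~\eqref{def : Z1 definition in Lemma} upper bounds $\|I_d - A DF(U_0)\|_l$, with $\mathcal{Z}_u$ playing its role from Theorem~\ref{th : approximation of inverse and norm} and~\eqref{eq : Zu defnition with Zu1 Zu2}. The first step is to use the isometry $\|U\|_l = \|LU\|_2$ from~\eqref{eq:norm_equality_Xl_L2}, together with $A = L^{-1}B$ and $DF(U_0) = L + DG(U_0)$, to rewrite
\[
\|I_d - A DF(U_0)\|_l = \|I_d - B\bigl(I_d + DG(U_0) L^{-1}\bigr)\|_2 = \|M\|_2,
\]
where $M \bydef -B_\om - B \cdot DG(U_0) L^{-1}$ is a bounded operator on $\ell^2$.

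The second step is to decompose $M$ into blocks using the orthogonal projections $\pi^N,\pi_N$ on the source and the refinement $\pi^N \oplus (\pi^{N_\G N} - \pi^N) \oplus \pi_{N_\G N}$ on the range, so that the finite-rank blocks assemble into the matrix norms inside $Z_1^N$ and everything else sits in a controllable tail. Two structural facts are exploited: $(i)$ $B_\om = \pi^N B_\om \pi^N$, so $B_\om$ contributes only to the $\pi^N$-range $\pi^N$-source block; and $(ii)$ since $U_0$ has finite Fourier support, each $V_\alpha$ is itself compactly supported (its support grows like $(N_\G - 1) N_0$), so the convolution operator $\mathbb{V}_\alpha$ in $DG(U_0) = \sum_{\alpha \in J_\mathbb{G}}\mathbb{V}_\alpha \partial^\alpha$ cannot push $\pi^N$-supported modes beyond $\pi^{N_\G N}$, hence $\pi_{N_\G N} DG(U_0) L^{-1} \pi^N = 0$. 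A direct algebraic expansion then yields $\pi^N M \pi^{N_\G N} = \pi^N - (\pi^N + B_\om)(I_d + DG(U_0) L^{-1})\pi^{N_\G N}$ (the first matrix in $Z_1^N$), while $(\pi^{N_\G N} - \pi^N) M \pi^N = -(\pi^{N_\G N} - \pi^N) DG(U_0) L^{-1} \pi^N$ (the second matrix in $Z_1^N$) since $(\pi^{N_\G N} - \pi^N) B_\om = 0$ and the piece $\pi_{N_\G N} M \pi^N$ vanishes by $(ii)$. The remaining source-tail contributions reduce, via $B_\om \pi_N = 0$ and the $\pi^N \oplus \pi_N$ orthogonality, to controlling $\|DG(U_0) L^{-1} \pi_N\|_2$. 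For this, Young's inequality $\|\mathbb{V}_\alpha W\|_2 \leq \|V_\alpha\|_1 \|W\|_2$ and the diagonal symbol $(2\pi i \tilde n)^\alpha / l(\tilde n)$ of $\partial^\alpha L^{-1}$ give
\[
\|DG(U_0) L^{-1}\pi_N\|_2 \leq \sum_{\alpha \in J_\mathbb{G}} \|V_\alpha\|_1 \max_{n \in \Z^m \setminus I^N} \frac{|(2\pi n)^\alpha|}{|l(\tilde n)|},
\]
which is precisely the tail piece in~\eqref{def : Z1 definition in Lemma}. Combining the block inequality $\|M\|_2^2 \leq \sum_{i,j} \|\pi_i M \pi_j\|_2^2$ with these identities gives $\|M\|_2^2 \leq (Z_1^N)^2 + (\text{tail})^2 \leq Z_1^2$, and the lemma then follows from Theorem~\ref{th : approximation of inverse and norm}.

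The main obstacle will be the precise orchestration of the block decomposition so that the sum of squared block norms matches the right-hand side of~\eqref{def : Z1 definition in Lemma} without producing spurious factors such as $\|B\|_2$ on the tail piece; this requires carefully pairing range and source projections so that $(I_d + B_\om)$, when applied to $DG(U_0) L^{-1} \pi_N$, splits along orthogonal range components $\pi^N \oplus \pi_N$ and the $\pi^N$-range contribution recombines with existing finite-rank pieces rather than multiplying the tail estimate. Once this bookkeeping is settled, the assembly of $Z_1^N$ and the tail into $Z_1$ is immediate, and the final conclusion $\|I_d - \A D\F(u_0)\|_l \leq Z_1 + \mathcal{Z}_u = \mathcal{Z}_1$ follows directly from Theorem~\ref{th : approximation of inverse and norm}.
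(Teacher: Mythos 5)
Your proposal is correct and follows essentially the same route as the paper's proof: reduce via the isometry $\|U\|_l=\|LU\|_2$ to $\|I_d - B(I_d+DG(U_0)L^{-1})\|_2$, split with the projections $\pi^N,\pi_N$ (using $B_\om=\pi^N B_\om\pi^N$ and the finite bandwidth of $DG(U_0)$ so that the $\pi^N$-range part is carried by the source block $\pi^{N_\G N}$ and the tail block is bounded by Young's inequality together with the diagonal symbol of $\partial^\alpha L^{-1}$), then conclude with Theorem~\ref{th : approximation of inverse and norm}. The bookkeeping you worry about is resolved exactly as in the paper: the range split $\pi^N\oplus\pi_N$ is applied first, so the tail piece $\pi_N DG(U_0)L^{-1}\pi_N$ never picks up a factor $\|B\|_2$, and the source-tail modes in $I^{N_\G N}\setminus I^N$ are absorbed into the first matrix of $Z_1^N$.
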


   \begin{proof}
       Using \eqref{eq:Z1_inequality_proof}, we have
       \begin{align*}
           \|I_d - ADF(U_0)\|_l = \|I_d - B\left(I_d + DG(U_0)L^{-1}\right)\|_2.
       \end{align*}
       Then, using that $B = I_d + B^N$ and $B^N = \pi^NB^N \pi^N$, we get
       \begin{align*}
           \|I_d - B\left(I_d + DG(U_0)L^{-1}\right)\|_2^2 &\leq \|\pi_N - \pi_NB\left(I_d + DG(U_0)L^{-1}\right)\|_2^2 \\ 
           & \quad + \|\pi^N - \pi^NB\left(I_d + DG(U_0)L^{-1}\right)\|_2^2\\
           &= \|\pi_N DG(U_0)L^{-1}\|_2^2 + \|\pi^N - B^N\left(I_d + DG(U_0)L^{-1}\right)\|_2^2.
       \end{align*}
     
     From  \eqref{def: G and j}, the monomials of $\G$ are of order at most $N_\mathbb{G}$, and hence $\pi^NDG(U_0) = \pi^NDG(U_0)\pi^{N_\mathbb{G}N}$. Moreover, as $L^{-1}$ is diagonal, we have $\pi^{N_\mathbb{G}N}L^{-1} = L^{-1}\pi^{N_\mathbb{G}N}$ and 
     \begin{equation*}
         \left\|\pi^N - B^N\left(I_d + DG(U_0)L^{-1}\right)\right\|_2 = \left\|\pi^N - B^N\left(I_d + DG(U_0)L^{-1}\right)\pi^{N_\mathbb{G}N}\right\|_2.
     \end{equation*}
     Then, we get $\|\pi_N DG(U_0)L^{-1}\|_2^2 \leq \|\pi_N DG(U_0)L^{-1}\pi^N\|_2^2 + \|\pi_N DG(U_0)L^{-1}\pi_N\|_2^2$.
     Using again \eqref{def: G and j} and that $L^{-1}$ is diagonal, we have $DG(U_0)L^{-1}\pi^N = \pi^{N_\mathbb{G}N}DG(U_0)L^{-1}\pi^N$. This implies
     \begin{equation*}
          \left\|\pi_N DG(U_0)L^{-1}\pi^N\right\|_2 = \left\|(\pi^{N_\mathbb{G}N}-\pi^N) DG(U_0)L^{-1}\pi^N\right\|_2.
     \end{equation*}
     Finally, using \eqref{eq:assumption_on_G} and recalling the notation \eqref{def : discrete conv operator}, we have $DG(U_0) = \sum_{\alpha \in J_\G} \mathbb{V}_\alpha \partial^\alpha.$
     Therefore,
     \[
         \|\pi_N DG(U_0)L^{-1}\pi_N\|_2 = \|\pi_N \sum_{\alpha \in J_\G} \mathbb{V}_\alpha \partial^\alpha L^{-1}\pi_N\|_2 
         \leq  \sum_{\alpha \in J_\G} \|\mathbb{V}_\alpha\|_2 \|\partial^\alpha L^{-1}\pi_N\|_2.
     \]
     However, using Young's inequality for the convolution we have $\|\mathbb{V}_\alpha\|_2 \leq \|V_\alpha\|_1.$ Moreover, using Assumption \ref{ass : LinvG in L1} and that $\partial^\alpha L^{-1}$ is diagonal, we have
     \begin{align*}
         \|\partial^\alpha L^{-1}\pi_N\|_2 = \max_{n \in \mathbb{Z}^m\setminus I^N}\frac{|(2\pi n)^\alpha|}{|l(\tilde{n})|}.
     \end{align*}
  Therefore, using \eqref{def : Z1N definition} and \eqref{def : Z1 definition in Lemma}, we obtain that $\|I_d - ADF(U_0)\|_l \leq Z_1$. The rest of the proof follows directly from Theorems \ref{th : approximation of inverse and norm} and \ref{th : Zu computation}.
  \end{proof}

  \begin{remark}\label{rem : Z1 periodic and diag dominance}
   Note that  ${Z}_1$ (cf. \eqref{def : Z1N definition} and \eqref{def : Z1 definition in Lemma}) is obtained via the computation of norms of vectors and matrices, which can be processed numerically. Moreover, since $U_0 = \pi^{N_0}U_0$, we have $V_\alpha = \pi^{N_\mathbb{G}N_0}V_\alpha$  for each $\alpha \in J_\G$. Therefore, $$\left(V_\alpha^*,V_\alpha*E_i\right)_2 = \left(V_\alpha^*,V_\alpha*(\pi^{2N_\mathbb{G}N_0}E_i)\right)_2$$
  where $i \in \{1, 2\}$. In particular, this implies that both $\mathcal{Z}_{u,1}^\alpha$ and $\mathcal{Z}_{u,2}^\alpha$ (satisfying \eqref{ineq : bound Zu1 in lemma} and \eqref{ineq : bound on Kalpha in Z1} respectively) can be obtained throughout a finite number of computations. Consequently, the bound $\mathcal{Z}_1$ can be fully computed throughout rigorous numerics (see \cite{julia_interval} for instance).\\
  Also, note that Remark \ref{rem : L1 property of J_G} provides that  the quantity
  \[
  \max_{n \in \mathbb{Z}^m\setminus I^N}\frac{|(2\pi n)^\alpha|}{|l(\tilde{n})|}
  \]
  goes to zero as $N$ goes to infinity, for all $\alpha \in J_\mathbb{G}$. As described in the introduction, the differential operator (in its Fourier series representation) allows to obtain a diagonal dominance structure (hence compactness) for the linearized operator. In particular, this justifies the approximation of the inverse of the linearized operator by a finite truncation and a diagonal tail (cf. \eqref{eq : decomposition of B}).  
  \end{remark}

Now, using the smoothness of the map $\mathbb{F}$, the bound $\mathcal{Z}_2$ satisfying \eqref{def : Z2} can directly be computed using the mean value theorem for Banach spaces, a result which we introduce next.
   \begin{lemma}[\bf Bound \boldmath$\mathcal{Z}_{2}$\unboldmath]\label{lem : Z2 bound}
   Let $r>0$ and let $\mathcal{Z}_2(r) >0$ be such that
   \begin{equation}\label{eq : Z2 in lemma}
       \mathcal{Z}_2(r) \geq  \max\{1, \|B^N\|_2\}\sup_{h \in B_r(u_0)}\|D^2\mathbb{F}(h)\|_{\mathcal{B}(H^l,L^2)}
   \end{equation}
   where $\|\cdot\|_{\mathcal{B}(H^l,L^2)}$ is the operator norm from bounded linear operators in $H^l$ to bounded linear operators in $L^2.$  Then $\mathcal{Z}_2(r)r \geq  \|\mathbb{A}\left(D\mathbb{F}(v)-D\mathbb{F}(u_0\right)\|_l$ for all $v \in \overline{B_r(u_0)} \subset H^l.$
   \end{lemma}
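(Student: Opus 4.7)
The plan is to factor the quantity as
\[
\|\mathbb{A}(D\mathbb{F}(v)-D\mathbb{F}(u_0))\|_l \le \|\mathbb{A}\|_{2,l}\, \|D\mathbb{F}(v)-D\mathbb{F}(u_0)\|_{l,2},
\]
and bound each factor separately. The first factor is handled by Lemma~\ref{lem:computation_norm_A}: since $\mathbb{A}=\mathbb{L}^{-1}(I_d+\mathbb{B}_\om)$ with $\mathbb{B}_\om$ the Fourier-series operator associated to $B_\om=\pi^N B_\om\pi^N$, that lemma gives $\|\mathbb{A}\|_{2,l}=\max\{1,\|I_d+B_\om\|_2\}$. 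Because $B_\om$ vanishes off the finite projection $\pi^N\ell^2$, the operator $I_d+B_\om$ acts as the identity on $\pi_N\ell^2$ and as $\pi^N+B_\om$ on $\pi^N\ell^2$, so its $\ell^2$-norm equals $\max\{1,\|\pi^N+B_\om\|_2\}$.

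For the second factor, I would invoke the mean value inequality for Fréchet-differentiable maps between Banach spaces applied to $D\mathbb{F}:H^l\to \mathcal{B}(H^l,L^2)$. Under Assumption~\ref{ass:A(1)}, $\mathbb{L}$ is linear and hence contributes nothing to $D^2\mathbb{F}$; under Assumption~\ref{ass : LinvG in L1} together with Lemma~\ref{Banach algebra}, each monomial composing $\mathbb{G}$ is a bounded multi-linear operator from $H^l$ into $L^2$, so $\mathbb{F}$ is smooth on $H^l$ and $D^2\mathbb{F}(h)$ is a well-defined bounded bilinear map from $H^l\times H^l$ into $L^2$. Applying the mean value inequality on the segment $[u_0,v]\subset \overline{B_r(u_0)}$ then yields
\[
\|D\mathbb{F}(v)-D\mathbb{F}(u_0)\|_{l,2} \le \sup_{h\in B_r(u_0)}\|D^2\mathbb{F}(h)\|_{\mathcal{B}(H^l,L^2)}\,\|v-u_0\|_l \le r\sup_{h\in B_r(u_0)}\|D^2\mathbb{F}(h)\|_{\mathcal{B}(H^l,L^2)}.
\]

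Combining both estimates with the hypothesis \eqref{eq : Z2 in lemma} on $\mathcal{Z}_2(r)$ produces the desired bound $\|\mathbb{A}(D\mathbb{F}(v)-D\mathbb{F}(u_0))\|_l\le \mathcal{Z}_2(r)\,r$. There is no serious obstacle here: the statement is essentially the standard Newton--Kantorovich Lipschitz bound for $D\mathbb{F}$ pulled back through $\mathbb{A}$. The only point worth a moment of care is interpreting $\|D^2\mathbb{F}(h)\|_{\mathcal{B}(H^l,L^2)}$ as the norm of a bounded bilinear form; in the intended applications such as the Kawahara equation in Section~\ref{sec:kawahara}, the nonlinearity is a low-degree polynomial, so this quantity admits an explicit and easily computable upper bound via the multi-linear estimates \eqref{eq : multilinearity of G}.
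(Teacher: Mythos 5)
Your proposal is correct and follows essentially the same route as the paper: the paper likewise reduces to $\|\mathbb{B}(D\mathbb{F}(v)-D\mathbb{F}(u_0))\|_{l,2}$ via the isometry $\|u\|_l=\|\mathbb{L}u\|_2$, applies the mean value theorem for Banach spaces to get the factor $\sup_{h}\|D^2\mathbb{F}(h)\|_{\mathcal{B}(H^l,L^2)}\,\|v-u_0\|_l$, and then uses Lemma~\ref{lem:computation_norm_A} together with $B_\om=\pi^N B_\om\pi^N$ to identify $\|\mathbb{B}\|_2=\max\{1,\|\pi^N+B_\om\|_2\}$. The only cosmetic difference is the order in which you apply submultiplicativity and the mean value inequality, which changes nothing.
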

   
   \begin{proof}
   Let $v \in B_r(u_0) \subset H^l$, by definition of the $H^l$ norm, we have
   \begin{align*}
       \|\mathbb{A}\left(D\mathbb{F}(v)-D\mathbb{F}(u_0\right)\|_l = \|\mathbb{B}\left(D\mathbb{F}(v)-D\mathbb{F}(u_0\right)\|_{l,2}.
   \end{align*}
   Then, using the mean value inequality for Banach spaces, we get
   \begin{align*}
       \|\mathbb{B}\left(D\mathbb{F}(v)-D\mathbb{F}(u_0\right)\|_{l,2} \leq \sup_{h \in B_r(u_0)} \|\mathbb{B}D^2\mathbb{F}(h)\|_{\mathcal{B}(H^l,L^2)}r\leq \|\mathbb{B}\|_2\sup_{h \in B_r(u_0)}\|D^2\mathbb{F}(h)\|_{\mathcal{B}(H^l,L^2)}r
       \end{align*}
      as $\|v-u_0\|_l \leq r$. We conclude the proof using \eqref{eq:computation_norm_A}.
   \end{proof}
   In practice, the term $D^2\mathbb{F}(h)$ involves a mix of multiplication operators composed with differential operators. Therefore, its norm computation relies on estimates similar to the ones presented in Lemma \ref{Banach algebra}. We illustrate such a computation in the case of the Kawahara equation in Lemma \ref{lem : Z_2 bound Kdv}.   Finally, the computation for the bound $\mathcal{Y}_0$ is obtained using Parseval's identity and the 2 norm of a vector.  The next result provides the details for such a computation.

\begin{lemma}[\bf Bound \boldmath$\mathcal{Y}_{0}$\unboldmath]\label{lem : bound Y0}
Recall \eqref{eq : f(u)=0 on X^l} and $N_{\mathbb{G}} \in \mathbb{N}$ from \eqref{def: G and j}. Let $\mathcal{Y}_0 >0$ be such that 
\begin{equation}
    \mathcal{Y}_0 \geq  |\om|^{\frac{1}{2}}\left(\left\|B^NF(U_0)\right\|_2^2 + \left\|(\pi^{N_0}-\pi^N)LU_0 + (\pi^{N_\mathbb{G}N_0}-\pi^N) G(U_0) - \pi_N \Psi\right\|_2^2 \right)^{\frac{1}{2}}.
\end{equation}
Then $\|\mathbb{A}\mathbb{F}(u_0)\|_l \leq \mathcal{Y}_0.$
\end{lemma}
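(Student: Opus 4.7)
The plan is to reduce the $H^l$-norm on the left to an $L^2$-norm via the isometry $\mathbb{L}:H^l\to L^2$, then decompose the result into two orthogonal pieces, one supported on $\Omega_0$ and one on its complement, and finally translate each piece into $\ell^2$ using Parseval's identity. Concretely, using $\mathbb{A}=\mathbb{L}^{-1}\mathbb{B}$ together with \eqref{eq : norm_equality_Hl_L2}, we have $\|\mathbb{A}\mathbb{F}(u_0)\|_l=\|\mathbb{B}\mathbb{F}(u_0)\|_2$. Since $\mathbb{B}=I_d+\mathbb{B}_\om$ with $\mathbb{B}_\om=\cha\mathbb{B}_\om\cha\in\mathcal{B}_\om(L^2)$, we can split
\[
\mathbb{B}\mathbb{F}(u_0)=\out\mathbb{F}(u_0)+\cha\bigl(I_d+\mathbb{B}_\om\bigr)\cha\mathbb{F}(u_0),
\]
and the two summands are orthogonal in $L^2$. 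Because $u_0\in H^l_\om$ and $\G(u_0)\in H^1(\R^m)$ with $\text{supp}(u_0),\text{supp}(\G(u_0))\subset\overline{\om}$ (using that the zero extension is legitimate thanks to Lemma~\ref{lem:ext_zero}), the term $\out\mathbb{F}(u_0)$ reduces to $-\out\psi$, which vanishes under the standing convention that $\psi$ is supported in $\om$ (as is the case, e.g., when $\psi=0$).

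Next I would identify $\cha\mathbb{F}(u_0)$ with its Fourier-series representation. Since $u_0$ is trace-free on $\partial\om$, its zero extension is regular enough that $\mathbb{L}u_0$ and $\G(u_0)$ on $\om$ agree with the Fourier series $LU_0$ and $G(U_0)$ respectively; hence $\cha\mathbb{F}(u_0)$ has Fourier series $F(U_0)=LU_0+G(U_0)-\Psi\in\ell^2$. Applying Parseval's identity for operators \eqref{eq : parseval for operators} (via Lemma~\ref{prop : operator link L^2}) then yields
\[
\|\cha(I_d+\mathbb{B}_\om)\cha\mathbb{F}(u_0)\|_2^2=|\om|\,\|(I_d+B_\om)F(U_0)\|_2^2.
\]

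The last step is to split $(I_d+B_\om)F(U_0)=(\pi^N+B_\om)F(U_0)+\pi_N F(U_0)$, which is an orthogonal decomposition in $\ell^2$ because $B_\om=\pi^N B_\om\pi^N$ makes $(\pi^N+B_\om)F(U_0)$ live in $\text{Range}(\pi^N)$ while $\pi_N F(U_0)$ lives in its complement. It remains to rewrite the tail: using $U_0=\pi^{N_0}U_0$ together with the fact that $L$ is diagonal, one gets $\pi_N LU_0=(\pi^{N_0}-\pi^N)LU_0$; and since every monomial in $G$ has degree at most $N_\G$, the convolution structure gives $G(U_0)=\pi^{N_\G N_0}G(U_0)$ and hence $\pi_N G(U_0)=(\pi^{N_\G N_0}-\pi^N)G(U_0)$. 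Combining these yields exactly the expression under the square root in the statement of the lemma. The main subtlety, and the step to execute carefully, is the justification that the zero extension of a trace-free Fourier truncation interacts correctly with the differential operator $\mathbb{L}$ and the polynomial nonlinearity $\G$, so that the Fourier-series identifications $\mathbb{L}u_0\leftrightarrow LU_0$ and $\G(u_0)\leftrightarrow G(U_0)$ on $\om$ hold rigorously; everything else is either Parseval's theorem or bookkeeping of projections.
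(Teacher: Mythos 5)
Your proposal is correct and follows essentially the same route as the paper's proof: reduce to $\|\mathbb{B}\mathbb{F}(u_0)\|_2$ via the isometry \eqref{eq : norm_equality_Hl_L2}, pass to the Fourier-series representation $F(U_0)$ by Parseval, and split off the tail with the projections using $U_0=\pi^{N_0}U_0$, $B_\om=\pi^N B_\om\pi^N$ and $G(U_0)=\pi^{N_\G N_0}G(U_0)$. The only difference is cosmetic: you make explicit the term $\out\mathbb{F}(u_0)=-\out\psi$ and the need for $\mathrm{supp}(\psi)\subset\om$, which the paper leaves implicit when asserting that $\mathbb{F}(u_0)$ is supported in $\om$.
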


\begin{proof}
Using \eqref{eq : norm_equality_Hl_L2}, we have $\|u\|_l = \|\mathbb{L}u\|_2$ for all $u \in H^l$. This implies
\[
\|\mathbb{A}\mathbb{F}(u_0)\|_l = \|\mathbb{L}\mathbb{A}\mathbb{F}(u_0)\|_2 = \|\mathbb{B}\mathbb{F}(u_0)\|_2.
\]
But now, combining Lemma \ref{lem : gamma and Gamma properties} and $u_0 = \gamma^\dagger\left(U_0\right) \in H^l_\om$, we obtain that $\mathbb{B}\mathbb{F}(u_0) = \gdag\left(B F(U_0)\right)$  and $$\|\mathbb{B}\mathbb{F}(u_0)\|_2 = \sqrt{|\om|} \|BF(U_0)\|_2.$$ 
Moreover, we have that $U_0 = \pi^{N_0}U_0$. In particular, $LU_0 = \pi^{N}LU_0 + (\pi^{N_0}-\pi^N)LU_0$, $\Psi = \pi^{N}\Psi + \pi_N\Psi$ and $G(U_0) = \pi^N G(U_0) + \pi_N G(U_0)$. We thus obtain
\begin{align*}
  \|BF(U_0)\|_2^2 =& \|B^N(LU_0 + G(U_0) -  \Psi)\|_2^2 + \|\pi_N (L(U_0) + G(U_0) -  \Psi )\|_2^2 \\
  =& \|B^NF(U_0)\|_2^2 + \|(\pi^{N_0}-\pi^N)LU_0 + \pi_N G(U_0) - \pi_N \Psi\|_2^2 .  
\end{align*}
Since $G$ is polynomial of order $N_{\mathbb{G}}$, $G(U_0) = \pi^{N_\mathbb{G}N_0}G(U_0)$ and so $\pi_N G(U_0) = (\pi^{N_\mathbb{G}N_0}-\pi^N)G(U_0)$. 
\end{proof}

\begin{remark}
    The computation of the bounds $\mathcal{Z}_1$ and $\mathcal{Z}_2$ depend on the computation of 2-norms of matrices using interval arithmetic. If a sharp precision is required, one can compute rigorous bounds for singular values (see \cite{rump_spectral_norm} for instance). Otherwise, one can use generalized Holder's inequality 
    \[
    \|M\|_2 \leq \|M\|_1 \|M\|_\infty.
    \]
    Moreover, in practice one might need to take $d$ big in order for $\mathcal{Z}_1$ to be strictly smaller than 1 (cf. condition \eqref{th: radii polynomial}). However, this comes at the cost of damaging the decay of the Fourier coefficients of $u_0.$ Therefore, it can be useful to choose $N_0 > N$ to gain precision on the approximate solution and obtain a sharp $\mathcal{Y}_0$ bound.
\end{remark}

Due to the foundation of our analysis on Fourier series, one can notice a lot of similarities with the computer-assisted proofs of periodic solutions using the Radii Polynomial approach (see \cite{periodic_navier_spontaneous}). More specifically, the bounds $\mathcal{Y}_0$, $\mathcal{Z}_1$ and $\mathcal{Z}_2$ have corresponding bounds $Y_0$, $Z_1$ and $Z_2$ associated to the periodic problem on $\om.$ These similarities allow to derive a condition \eqref{condition for Z2 periodic}, under which a proof of a solution on $\mathbb{R}^m$ using Theorem \ref{th: radii polynomial} implies a proof of existence of a periodic solution.

\begin{theorem}[\bf Existence of periodic solutions]\label{th : radii periodic}
Let $\mathbb{A} : L^2 \to H^l$ be defined in  Section \ref{sec:bound_inverse} and let $A : \ell^2 \to X^l$ be its Fourier series correspondent. Let $\mathcal{Y}_0, \mathcal{Z}_1 \bydef Z_1 + \mathcal{Z}_{u}$ (where $Z_1$ is defined in \eqref{def : Z1 definition in Lemma}) and $\mathcal{Z}_2$ be the associated bounds defined in Lemmas \ref{lem : bound Y0}, \ref{lem : bound Z1} and \ref{lem : Z2 bound} respectively. Assume that there exists $r>0$ such that \eqref{condition radii polynomial} holds. Moreover, assume that 
\begin{equation} \label{condition for Z2 periodic}
    \|A(DF(V)-DF(U_0))\|_l \leq \mathcal{Z}_2(\sqrt{|\om|} r)\sqrt{|\om|}r, \quad \text{for all } V \in B_r(U_0).
\end{equation} 
Then, defining $Y_0 \bydef \frac{\mathcal{Y}_0}{\sqrt{|\om|}}$ and $Z_2(r) \bydef \mathcal{Z}_2(\sqrt{|\om|}r)\sqrt{|\om|}$, we have
\begin{align*}
    \|AF(U_0)\|_l &\leq Y_0\\
    \|I_d - ADF(U_0)\|_l &\leq Z_1\\
     \|A(DF(V)-DF(U_0))\|_l &\leq Z_2(r)r, \quad \text{for all } V \in B_r(U_0).
\end{align*}
Letting $\tilde r \bydef \frac{r}{\sqrt{|\om|}}$, we get that 
\[
\frac{1}{2}Z_2(\tilde r)\tilde r^2 - (1-Z_1)\tilde r + Y_0 < 0 \text{ and } {Z}_1 + {Z}_2(\tilde{r})\tilde{r} < 1,
\]
and hence, there exists a unique solution $\tilde{U}$ to \eqref{eq : f(u)=0 on X^l} in $\overline{B_{\tilde r}(U_0)}$, where $B_{\tilde r}(U_0)$ is the open ball in $X^l$ of radius $\tilde r$ and centered at $U_0.$
\end{theorem}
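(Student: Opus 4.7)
The plan is to apply the Newton--Kantorovich Theorem~\ref{th: radii polynomial} to the sequence-space problem $F(U)=0$ in $X^l$, with operator $A=L^{-1}B$, by obtaining the three bounds $Y_0,Z_1,Z_2$ from $\mathcal{Y}_0,Z_1,\mathcal{Z}_2$ via a Parseval-type rescaling between $(H^l,L^2)$ and $(X^l,\ell^2)$. The guiding observation is that, with the convention \eqref{eq : definition fourier series} for Fourier coefficients, Parseval's identity reads $\|v\|_2 = \sqrt{|\Omega_0|}\,\|V\|_2$ for any $v\in L^2$ supported on $\Omega_0$ with Fourier series $V$. Combined with the isometries $\|u\|_l=\|\mathbb{L}u\|_2$ and $\|U\|_l=\|LU\|_2$ (cf.~\eqref{eq : norm_equality_Hl_L2}, \eqref{eq:norm_equality_Xl_L2}), this same factor $\sqrt{|\Omega_0|}$ relates $H^l$-norms of functions supported on $\Omega_0$ to $X^l$-norms of their Fourier coefficients.

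First I would derive the $Y_0$ bound. Since $\mathrm{supp}(u_0)\subset\Omega_0$, the function $\mathbb{F}(u_0)$ is supported on $\Omega_0$, and since $\mathbb{B}_{\Omega_0}\in\mathcal{B}_{\Omega_0}(L^2)$, so is $(I_d+\mathbb{B}_{\Omega_0})\mathbb{F}(u_0)$, whose Fourier series is exactly $BF(U_0)=LAF(U_0)$. Applying the isometries above gives
\[
\|\mathbb{A}\mathbb{F}(u_0)\|_l=\|(I_d+\mathbb{B}_{\Omega_0})\mathbb{F}(u_0)\|_2=\sqrt{|\Omega_0|}\,\|BF(U_0)\|_2=\sqrt{|\Omega_0|}\,\|AF(U_0)\|_l,
\]
and hence $\|AF(U_0)\|_l\le \mathcal{Y}_0/\sqrt{|\Omega_0|}=Y_0$. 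The bound $\|I_d-ADF(U_0)\|_l\le Z_1$ is already produced by Lemma~\ref{lem : bound Z1} (this is the very quantity whose definition appears in \eqref{def : Z1 definition in Lemma}), and the bound $\|A(DF(V)-DF(U_0))\|_l\le Z_2(r)r$ is precisely hypothesis~\eqref{condition for Z2 periodic} rewritten using the definition $Z_2(r)=\mathcal{Z}_2(\sqrt{|\Omega_0|}r)\sqrt{|\Omega_0|}$.

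Next I would verify the polynomial inequality at $\tilde r=r/\sqrt{|\Omega_0|}$. A direct substitution yields
\[
\tfrac{1}{2}Z_2(\tilde r)\tilde r^2-(1-Z_1)\tilde r+Y_0=\frac{1}{\sqrt{|\Omega_0|}}\Bigl(\tfrac{1}{2}\mathcal{Z}_2(r)r^2-(1-Z_1)r+\mathcal{Y}_0\Bigr),
\]
since every term carries exactly one factor of $1/\sqrt{|\Omega_0|}$. Because $\mathcal{Z}_1=Z_1+\mathcal{Z}_u\ge Z_1$, we have $-(1-Z_1)\le -(1-\mathcal{Z}_1)$, so the right-hand side is bounded above by $\frac{1}{\sqrt{|\Omega_0|}}\bigl(\tfrac12\mathcal{Z}_2(r)r^2-(1-\mathcal{Z}_1)r+\mathcal{Y}_0\bigr)<0$ by hypothesis~\eqref{condition radii polynomial}. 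Applying Theorem~\ref{th: radii polynomial} then yields a unique zero $\tilde U\in\overline{B_{\tilde r}(U_0)}\subset X^l$ of $F$.

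The one nontrivial point to dispatch is the injectivity of $A:\ell^2\to X^l$ required by Theorem~\ref{th: radii polynomial}. I would argue it along the lines of Remark~\ref{rem : Z1 < 1 remark}: the strict inequality forces $Z_1<1$, so by Neumann series $ADF(U_0):X^l\to X^l$ is invertible, giving that $A$ is surjective, hence $B=LA:\ell^2\to\ell^2$ is surjective; but $B=I_d+B_{\Omega_0}$ with $B_{\Omega_0}=\pi^N B_{\Omega_0}\pi^N$ finite-rank, so $B$ is Fredholm of index zero, and surjectivity upgrades to bijectivity, making $A$ injective. This Fredholm argument at the sequence-space level, and making sure the supports/Parseval factors align exactly in the $Y_0$ step, are the only delicate pieces; the rest is bookkeeping.
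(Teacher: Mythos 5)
Your proposal is correct and follows essentially the same route as the paper: the $Y_0$, $Z_1$, $Z_2$ bounds are obtained exactly as in the paper (Parseval rescaling by $\sqrt{|\Omega_0|}$ for $Y_0$, Lemma~\ref{lem : bound Z1} for $Z_1$, hypothesis~\eqref{condition for Z2 periodic} for $Z_2$), the radii polynomial inequality at $\tilde r = r/\sqrt{|\Omega_0|}$ follows from $Z_1 \le \mathcal{Z}_1$ together with~\eqref{condition radii polynomial}, and Theorem~\ref{th: radii polynomial} is then applied. The only (harmless) deviation is the injectivity of $A$: the paper invokes Remark~\ref{rem : Z1 < 1 remark} (injectivity of $\mathbb{A}$ via Theorem~\ref{th : approximation of inverse and norm}, which transfers to $A$), whereas you give a direct Neumann-series/Fredholm argument for $B = I_d + B_\om$ at the sequence level; both are valid.
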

\begin{proof}
 First, using the proof of Lemma \ref{lem : bound Y0}, we have
\begin{align*}
    \|\mathbb{A}\mathbb{F}(u_0)\|_l =  \sqrt{|\om|}\|AF(U_0)\|_l \leq \mathcal{Y}_0. 
\end{align*}
This yields that $\|AF(U_0)\|_l \leq Y_0$. Then $\|I_d - ADF(U_0)\|_l \leq Z_1$ is true by definition of $Z_1$ in Lemma \ref{lem : bound Z1} and $ \|A(DF(V)-DF(U_0))\|_l \leq Z_2(r)r $ for all $V \in B_r(U_0)$ by assumption. 

Now, suppose that $r>0$ satisfies \eqref{condition radii polynomial} and let $\tilde{r} \bydef \frac{r}{\sqrt{|\om|}}$. Then let  $T : X^l \to X^l$ be defined as 
\begin{align*}
    T(U) \bydef U - AF(U).
\end{align*}
We want to prove that $T : \overline{B_{\tilde{r}}(U_0)} \to \overline{B_{\tilde{r}}(U_0)}$ is well-defined and is a contraction. Notice that $Z_1 \leq \mathcal{Z}_1$, then by construction of $Y_0$ and $Z_2$, we have
\begin{align*}
    \frac{1}{2}Z_2(\tilde{r})\tilde{r}^2 - (1-Z_1)\tilde{r} + Y_0 &\leq  \frac{1}{2}\mathcal{Z}_2(r)\frac{r^2}{\sqrt{|\om|}} -  (1-\mathcal{Z}_1)\frac{r}{\sqrt{|\om|}} + \frac{\mathcal{Y}_0}{\sqrt{|\om|}}\\
    &= \frac{1}{\sqrt{|\om|}}\left(\frac{1}{2}\mathcal{Z}_2(r){r^2}-  (1-\mathcal{Z}_1){r}+ {\mathcal{Y}_0}\right)<0
\end{align*}
 using \eqref{condition radii polynomial}. Similarly, $ {Z}_1 + {Z}_2(\tilde{r})\tilde{r} < 1$. Moreover, by construction of $\mathbb{A}$, we have that $A$ is injective if and only if  $\mathbb{A}$ is injective, which is the case if \eqref{condition radii polynomial} is satisfied (see Remark \ref{rem : Z1 < 1 remark}).
 Therefore, we apply  Theorem \ref{th: radii polynomial} and  obtain that $T : \overline{B_{\tilde{r}}(U_0)} \to \overline{B_{\tilde{r}}(U_0)}$ is well-defined and is a contraction. This concludes the proof.
\end{proof}

Once a solution $\tilde u \in B_r(u_0) \subset H^l$ to \eqref{eq : f(u)=0 on H^l} has been obtained through the use of Theorem \ref{th: radii polynomial}, i.e bounds $\mathcal{Y}_0, \mathcal{Z}_1$ and $\mathcal{Z}_2(r)$ satisfying \eqref{condition radii polynomial} for some $r>0$ have been obtained, then one can try to check if \eqref{condition for Z2 periodic} is satisfied in order to obtain a solution to \eqref{eq : f(u)=0 on X^l} in $X^l$. Using a Riemann sum argument (as illustrated in Remark \ref{rem : riemann sum idea}), one can show that the condition \eqref{condition for Z2 periodic} tends to the condition \eqref{def : Z2} as $d \to \infty$. Therefore, if $d$ is big enough, then \eqref{condition for Z2 periodic} is usually easy to reach.

\section{Constrained PDEs and Eigenvalue Problems}\label{sec:constraigned}

In this section, we introduce an extension of the analysis presented in Sections \ref{sec:bound_inverse} and \ref{sec : computer assisted analysis} to treat the case of a PDE coupled with an extra equation and for which we solve for an extra parameter. The most obvious example of this extension is the case of an eigenvalue problem (see Section \ref{ssec:isolated}) where one looks for an  eigencouple $(\lambda, u) \in \mathbb{C} \times H^l$, where $u$ solves a linear PDE which depends on an eigenvalue $\lambda$ and where an extra equation is imposed to fix the scaling invariance of the eigenvector $u$. A second application of this extension is given by PDEs having conserved quantities leading to invariances, where one can use those conserved quantities as extra equations and use unfolding parameters to desingularize the problem (see \cite{Lessard2021conserved,Lessard2021conserved_nbody} for instance). The process of desingularization is useful to prove existence of solutions using a contraction mapping argument. A last example worth mentioning is the case of computing branches of solutions via a pseudo-arclength continuation algorithm, where a natural parameter of the PDE becomes an extra variable and where a hyperplane equation is appended to the problem (cf. \cite{MR0910499,lessard_global_smooth_curves}).

While in this paper, we only consider the case of a PDE associated with one extra equation and one extra parameter, our approach can easily be generalized for multiple constraints with the same number of extra parameters. We begin by introducing the set-up of the problem.

\subsection{Set-up of the augmented zero finding problem}

We assume that the linear operator $\mathbb{L}=\mathbb{L}(\nu)$ and the differential operators in $\mathbb{G}=\mathbb{G}(\nu)$ (denoted $\mathbb{G}^p_{i,k}$ in Assumption~\ref{ass : LinvG in L1}) depend polynomially on a parameter $\nu \in \mathbb{C}$. As in Section~\ref{sec:bound_inverse}, we fix some $u_0 \in H^l$ such that supp$(u_0) \subset \overline{\Omega_0}$ ($u_0 \in H^l_\om$). Moreover, we fix some $\nu_0 \in \mathbb{C}$ such that $\mathbb{L}(\nu_0)$ is invertible in order to ensure that Assumption~\ref{ass:A(1)} is still satisfied at $\nu_0$. In particular, we define the norm on $H^l$ as 
\[
\|u\|_l^2 \bydef \int_{\mathbb{R}^m} |\hat{u}(\xi)|^2|l_{\nu_0}(\xi)|^2d\xi
\]
where $l_{\nu_0}$ ($|l_{\nu_0}| > 0$) is the Fourier transform of $\mathbb{L}(\nu_0)$.
Denote by $H_1 \bydef \mathbb{C} \times H^l$ and $H_2 \bydef \mathbb{C} \times L^2$ the Hilbert spaces endowed respectively with the norms
\[
    \|(\nu,u)\|_{H_1} \bydef \left( |\nu|^2 + \|u\|_l^2\right)^{\frac{1}{2}} ~~ \text{ and } ~~
    \|(\nu,u)\|_{H_2} \bydef \left( |\nu|^2 + \|u\|_2^2\right)^{\frac{1}{2}}.
\]
Let $\phi \colon H^l \to \mathbb{C}$ be a bounded linear operator  such that
\[\phi(u) \bydef (\mathbb{L}(\nu_0)^{-1}\phi_0,u)_l = (\phi_0,\mathbb{L}(\nu_0)u)_2\]
for all $u \in H^l$, where $\phi_0 \in L^2_\om$ is fixed. We also denote by $\phi_0^*$ the dual of $\phi_0$, that is the bounded linear functional associated to $\phi_0$ in $L^2.$ In other words,
\[
\phi_0^*(u) = (\phi_0,u)_2, \quad \text{for all } u \in L^2. 
\]
Then, denote $x = (\nu,u) \in H_1$ and define the augmented zero finding problem $\overline{\mathbb{F}} : H_1 \to H_2$ as 
\begin{equation}\label{eq : f(u) = 0 on H1}
    \overline{\mathbb{F}}(x) \bydef \begin{pmatrix}\mathbb{\phi}(u)+ \eta\\
    \mathbb{F}(x)\end{pmatrix} =0
\end{equation}
for all $x = (\nu,u) \in H_1$, where ${\mathbb{F}}(x) \bydef \mathbb{L}(\nu)u + \mathbb{G}(\nu,u) -\psi$ and for some fixed constant $\eta \in \mathbb{R}$. Denoting $x_0 \bydef (\nu_0, u_0)$, we have that
\[
D\overline{\mathbb{F}}(x_0) = 
\begin{pmatrix}
0  & \phi(\cdot)\\
\partial_\nu\mathbb{F}(x_0) &\mathbb{L}(\nu_0) + \partial_u\mathbb{G}(x_0)
\end{pmatrix}.
\]
For the sake of simplicity of the presentation we write $\mathbb{L} = \mathbb{L}(\nu_0)$ and denote 
\[
     \oL \bydef \begin{pmatrix}
             1  & 0\\
             0 &\mathbb{L}
        \end{pmatrix}.
\]
 The augmented zero finding problem \eqref{eq : f(u) = 0 on H1} has a periodic boundary value problem correspondence. In fact, as $\phi_0 \in L^2_\om$, we can define $\Phi \in \ell^2$ to be the corresponding sequence of Fourier coefficients  of $\phi_0$ in $\ell^2$. Then we define $X_1 \bydef \mathbb{C}\times X^l$ and $X_2 \bydef \mathbb{C} \times \ell^2$ the corresponding Hilbert spaces associated to their natural norms
\[
    \|(\nu,U)\|_{X_1} \bydef \left( |\nu|^2 + {|\om|}\|U\|_l^2\right)^{\frac{1}{2}}  ~~ \text{and} ~~
    \|(\nu,U)\|_{X_2} \bydef \left( |\nu|^2 + {|\om|}\|U\|_2^2\right)^{\frac{1}{2}}.
\]
 Note that the normalization by $\frac{1}{|\om|}$ is useful to obtain an isometry (cf. Lemma \ref{lem : properties constraigned gamma}). 
 Finally, we  define a corresponding operator $\overline{F} : X_1 \to X_2$ as 
 \begin{align*}
    \overline{F}(\nu,U) \bydef \begin{pmatrix} |\om|(\Phi,L(\nu_0)U)_2 +\eta\\
    {F}(\nu,U)\end{pmatrix} 
\end{align*}
where \begin{equation}\label{def : F tilde}
    {F}(\nu,U)  \bydef L(\nu)U + G(\nu,U) - \Psi
\end{equation} for all $(\nu,U) \in X_1$. As in Section \ref{ssec:periodic_space}, $L(\nu)$ and $G$ are the Fourier coefficients operators corresponding to $\mathbb{L}(\nu)$ and $\mathbb{G}$ respectively. Moreover, we have $\Psi = \gamma\left(\psi\right)$, where $\psi$ is introduced in \eqref{eq : f(u)=0 on S}.

\subsection{Approximate inverse of \boldmath$D\overline{\mathbb{F}}(x_0)$\unboldmath}\label{sec : approximate inverse constrained}

Similarly as what was achieved in Section \ref{sec:bound_inverse}, denote $H_{2,\om} \bydef \{(\nu,u) \in H_2, ~~ \text{supp}(u) \subset \overline{\om}\}$ and $H_{1,\om}\bydef \{(\nu,u) \in H_1, ~~ \text{supp}(u) \subset \overline{\om}\}$. In particular, recall that we assumed $x_0 = (\nu_0,u_0) \in H_{1,\om}$. Now, denote $\overline{\mathbb{1}}_\om$ and $\overline{\mathbb{1}}_{\R^m\setminus \om}$ as 
\begin{align*}
    \overline{\mathbb{1}}_\om \bydef \begin{pmatrix}
        1&0\\
        0&\cha
    \end{pmatrix}~~ \text{ and }  ~~\overline{\mathbb{1}}_{\R^m\setminus \om} \bydef \begin{pmatrix}
        0&0\\
        0&\out
    \end{pmatrix}. 
\end{align*}
Then, let us define $\mathcal{B}(H_2)$ (respectively $\mathcal{B}(X_2)$) as the set of bounded linear operators on $H_2$ (respectively $X_2$). Moreover, denote by $\mathcal{B}_\om(H_2)$ the following  subset of $\mathcal{B}(H_2)$ 
\[
\mathcal{B}_\om(H_2) \bydef \left\{\overline{\B}_\om \in \mathcal{B}(H_2), ~~ \overline{\B}_\om = \overline{\mathbb{1}}_\om \overline{\B}_\om \overline{\mathbb{1}}_\om\right\}.
\]
Now,  let $\og : H_2 \to X_2$ and $\ogd : X_2 \to H_2$ be defined as follows
\begin{align}
    \og \bydef \begin{pmatrix}
        1 & 0\\
        0 & \gamma
    \end{pmatrix}
    ~~ \text{ and } ~~ \ogd \bydef \begin{pmatrix}
        1 & 0\\
        0 & \gamma^\dagger
    \end{pmatrix}.
\end{align}
Similarly, define $\oGG : \mathcal{B}(H_2) \to \mathcal{B}(X_2)$ and $\oGGD : \mathcal{B}(X_2) \to \mathcal{B}(H_2)$ as 
\begin{align}
    \oGG\left(\overline{\B}\right) \bydef \og \overline{\B} \ogd
    ~~ \text{ and } ~~ \oGGD\left(\overline{B}\right) \bydef \ogd \overline{B} \og
\end{align}
for all $\overline{\B} \in \mathcal{B}(H_2)$ and all $\overline{B} \in \mathcal{B}(X_2).$

\begin{lemma}\label{lem : properties constraigned gamma}
    The map $\og : H_{2,\om} \to X_2$ (respectively $\oGG : \mathcal{B}_\om(H_2) \to \mathcal{B}(X_2)$) is an isometric 
    isomorphism whose inverse is $\ogd : X_2 \to H_{2,\om}$ 
    (respectively $\oGGD : \mathcal{B}(X_2) \to \mathcal{B}_\om(H_2)$). In particular,
    \begin{align*}
        \|(\nu,u)\|_{H_2} = \|(\nu,U)\|_{X_2} ~~ \text{ and } ~~ \|\overline{\B}_\om\|_{H_2} = \|\overline{B}\|_{X_2} 
    \end{align*}
    for all $(\nu,u) \in H_{2,\om}$ and all $\overline{\B}_\om \in \mathcal{B}_\om(H_2)$, where $(\nu,U) \bydef \og\left(\nu,u\right)$ and $\overline{B} \bydef \oGG\left(\overline{\B}_\om\right)$.
\end{lemma}
Finally, let $N \in \mathbb{N}$ ($N$ is the size of the numerical approximation) and define $\overline{\pi}^N$ and $\overline{\pi}_N$ as 
\begin{align}
    \overline{\pi}^N \bydef \begin{pmatrix}
        1 & 0\\
        0 & \pi^N
    \end{pmatrix} 
  ~~  \text{ and } ~~ \begin{pmatrix}
        0 & 0\\
        0 & \pi_N
    \end{pmatrix}, 
\end{align}
where $\pi^N$ and $\pi_N$ are given in \eqref{def : projection on size N}.

 In  the same fashion as what is achieved in Section \ref{sec:bound_inverse}, we want to approximate the inverse of $D\oF(x_0)$ by some operator $\oA : H_2 \to H_1$ through the construction of an operator $\oB : H_2 \to H_2$, where $\oA = \oL^{-1} \oB.$ In particular, let us define
 \[
     \overline{L} \bydef \begin{pmatrix}
             1  & 0\\
             0 &{L}(\nu_0)
        \end{pmatrix}.
\]
We start by computing numerically (using floating point arithmetic), an approximate inverse $\overline{B}^N : X_2 \to X_2$ for $\overline{\pi}^ND\overline{F}(\nu_0,U_0)\overline{L}^{-1}\overline{\pi}^N$ such that $\overline{B}^N  = \overline{\pi}^N\overline{B}^N \overline{\pi}^N$. Then, we define $\oA : H_2 \to H_1$ and $\overline{A} : X_2 \to X_1$ as 
\begin{align}\label{def : operator A constraigned}
    \oA \bydef \oL^{-1}\left(\overline{\mathbb{1}}_{\R^m\setminus\om} + \oGGD\left(\overline{\pi}_N + \overline{B}^N\right) \right) ~~\text{ and } ~~ \overline{A} \bydef \overline{L}^{-1}\left(\overline{\pi}_N + \overline{B}^N\right).
\end{align}

The above construction allows to derive the equivalent of Theorem \ref{th : approximation of inverse and norm} in the case of constrained PDEs. More specifically, the next Theorem \ref{th : approximation of inverse and norm constrained PDE} provides control on the approximation $\oA$ for the inverse of $D\oF(x_0)$.

\begin{theorem}\label{th : approximation of inverse and norm constrained PDE}
Let $\oA : H_2 \to H_1$ be given in \eqref{def : operator A constraigned}. Then, let $Z_1 >0$ and  $\mathcal{Z}_{u}>0$ be such that
\begin{align*}
    \|I_d - \overline{A}D\overline{F}(\nu_0,U_0)\|_{X_1} &\leq Z_1\\
    \max\left\{1,\left\|\overline{B}^N\right\|_{X_2}\right\} \sum_{\alpha \in J_\mathbb{G}} \left\|\left(\Gamma^\dagger\left((\partial^\alpha L^{-1})^*\right) - (\partial^\alpha\iL)^*\right)\mathbb{v}_\alpha^*\right\|_2&\leq \mathcal{Z}_{u}.
\end{align*}
If $\mathcal{Z}_1 \bydef Z_1 + \mathcal{Z}_u$, then 
\begin{equation}\label{ineq : upper bound Z1 in lemma constrained}
    \left\|I_d - \oA D\oF(x_0)\right\|_l \leq \mathcal{Z}_1.
\end{equation}
Moreover, if $\mathcal{Z}_1 <1$, then both $\oA : H_2 \to H_1$ and  $D\oF(x_0) : H_1 \to H_2$ have a bounded inverse and
\begin{equation}
    \|D\oF(x_0)^{-1}\|_{H_2,H_1} \leq \frac{\max\left\{1,\left\|\overline{B}^N\right\|_{X_2}\right\}}{1-\mathcal{Z}_1}.
\end{equation}
\end{theorem}
\begin{proof}
The proof is obtained by combining Lemma \ref{lem : properties constraigned gamma} and the reasoning of the proof of Theorem \ref{th : approximation of inverse and norm} on the augmented system $\oF$. 
\end{proof}

The previous Theorem \ref{th : approximation of inverse and norm constrained PDE} provides a way to compute the defect $\|I_d - \oA D\oF(x_0)\|_l$ under the construction given in \eqref{def : operator A constraigned}. Moreover, the previous Theorem \ref{th : approximation of inverse and norm constrained PDE} provides that $\oA$ is injective if $\mathcal{Z}_1<1$ (which is required in order to use Theorem \ref{th: radii polynomial}). Therefore, using Remark \ref{rem : Z1 < 1 remark}, if \eqref{condition radii polynomial} is satisfied then assuming that $\oA$ is injective becomes redundant.  

\begin{remark}
Similarly as Remark \ref{rem : link norm of inverses}, if one has access to the quantity $\|D\overline{F}({\nu_0},U_0)^{-1}\|_{X_2,X_1}$, then
\begin{align*}
    \left\|D\overline{F}(x_0)^{-1}\right\|_{H_2,H_1} \leq \frac{\left\|D\overline{F}({\nu_0},U_0)^{-1}\right\|_{X_2,X_1}}{1-\mathcal{Z}_{u}}
\end{align*}
where 
$
    \|D\overline{F}({\nu_0},U_0)^{-1}\|_{X_2,X_1} \sum_{\alpha \in J_\mathbb{G}} \|\left(\Gamma^\dagger\left((\partial^\alpha L^{-1})^*\right) - (\partial^\alpha\iL)^*\right)\mathbb{v}_\alpha^*\|_2 \leq \mathcal{Z}_{u}.$
\end{remark}

% \subsection{Bounds for PDEs with constraints}\label{ssec:pde_constraigned_case}

% The analysis presented in  Section \ref{ssec : bounds pde case} can naturally be extended for the constrained PDE case. We present in this section the details for the computations of the bounds in Theorem \ref{th: radii polynomial} applied to \eqref{eq : f(u) = 0 on H1}. First, let $\phi_0 \in L^2_\om$ and  denote by $\Phi \bydef (z_n)_{n \in \mathbb{Z}^m}$ its  Fourier coefficients (defined in \eqref{eq : definition fourier series}). Moreover, we consider that $x_0 = (\nu_0,u_0) \in H_1$ is given and $u_0 \in H^l_\om$ is constructed using  Section \ref{ssec : construction of u0 in Hlom}.

% We consider the operator $\overline{\mathbb{F}}$ defined in Section \ref{ssec:isolated} as well as its corresponding operator on Fourier series $$ \overline{F}(\nu,U) \bydef 
%     \begin{pmatrix} |\om|(L(\nu_0)U,\Phi)_2+\eta\\
%     L(\nu)U + G(\nu,U) - \Psi\end{pmatrix},
% $$ where $\Psi$ is the sequence of  Fourier coefficients of $\mathbb{1}_{\Omega_0}\psi$ defined earlier. As in Section \ref{sec:constraigned}, we denote ${L}(\nu_0)$ as $L$ to simplify notations. Moreover, assume that $\oA : H_2 \to H_1$ is fixed and was constructed using the approach of Section \ref{sec : approximate inverse constrained}. 

In a similar fashion as Section \ref{ssec : bounds pde case}, one can use Theorem \ref{th: radii polynomial} in order to prove the existence of zeros of $\oF$ in $H_1$. Indeed, taking $X= H_1$ and $Y= H_2$, Theorem \ref{th: radii polynomial} is applicable to \eqref{eq : f(u) = 0 on H1}. In particular, using the construction of $\oA$ in \eqref{def : operator A constraigned}, we can compute the bound $\mathcal{Y}_0$, $\mathcal{Z}_1$ and $\mathcal{Z}_2$ in the case of a PDE with constraints. The computation of $\mathcal{Z}_1$ has already been detailed in Theorem \ref{th : approximation of inverse and norm constrained PDE}. Moreover, the computation of $\mathcal{Z}_2$, similarly as what was achieved in Lemma \ref{lem : Z2 bound}, can be obtained using the mean value inequality for Banach spaces. Finally, Lemma \ref{lem : properties constraigned gamma} allows to compute the bound $\mathcal{Y}_0$ though the isometry generated by $\og$. We illustrate such an application in Section \ref{sec : kdv eigen}. 
% \begin{lemma}\label{bound Y0 constrained case}
% Let $\mathcal{Y}_0 >0$ be such that 
% \begin{equation*}
%     \mathcal{Y}_0 \geq \|M(I_d + B_\om)\overline{F}(\frac{\nu_0}{\sqrt{|\om|}},U_0)\|_{X_2}
% \end{equation*}
% where $M : X_2 \to X_2$ is defined as $M \begin{pmatrix}
% \nu\\
% U
% \end{pmatrix} \bydef \begin{pmatrix}
% \nu\\
% \sqrt{|\om|}U
% \end{pmatrix}$.
% Then $\|\oA\oF(x_0)\|_{H_1} \leq \mathcal{Y}_0.$
% \end{lemma}
 
%  \begin{proof}
% The proof follows the same steps as the proof of Lemma \ref{lem : bound Y0}. By definition of the norm on $H_1$ and using \eqref{eq : plancherel definition}, we have
% \[
% \|\oA\oF(x_0)\|_{H_1} = \| \oB\oF(x_0)\|_{H_2}.
% \]
% Now, recall that $\oB = I_d + \oB_\om$ and using that $\oF(x_0) = \begin{pmatrix}
%             1  & 0\\
%              0 &\cha
%         \end{pmatrix} \oF(x_0)$, we obtain that 
% \[\|\oB\oF(x_0)\|_{H_2} = \|\oB\begin{pmatrix}
%             1  & 0\\
%              0 &\cha
%         \end{pmatrix}\oF(x_0)\|_{H_2}.\]
% Then, notice that $\oB\begin{pmatrix}
%             1  & 0\\
%              0 &\cha
%         \end{pmatrix}$ as a Fourier series representation given by $I_d + \overline{B}_\om$ using Lemma~\ref{prop : operator link L^2 extra equation}. Therefore, we can use \eqref{eq : parseval for operators constrained} and we get
% \[
%     \|\oB\oF(x_0)\|_{H_2} = \|M(I_d + B_\om)\overline{F}(\frac{\nu_0}{\sqrt{|\om|}},U_0)\|_{X_2}. 
% \]
%  \end{proof}

\begin{remark}
    Similarly as what was achieved in Theorem \ref{th : radii periodic}, the existence of a zero of $\oF$ in $H_1$ using Theorem \ref{th: radii polynomial} implies the existence of a zero of $\overline{F}$ in $X_1$ under a mild assumption of the bound $\mathcal{Z}_2$.
\end{remark}

\subsection{Application to the proof of isolated eigenvalues of the Jacobian}\label{ssec:isolated}

 As described earlier, one of the main applications of the present section is eigenvalue problems. In this section, we define the needed  operators for the proof of eigenvalues of $\mathbb{L} + D\mathbb{G}(\tilde{u})$, where $\tilde{u} \in H^l$. In particular, supposing that $\mathbb{F}(\tilde{u}) =0$ and that one is interested in the dynamics of  $\partial_t v = \mathbb{F}(v)$ around $\tilde{u}$, the eigenvalues of $\mathbb{L} + D\mathbb{G}(\tilde{u})$ can help obtain the spectral stability of the stationary solution $\tilde{u}$. 
 In particular, if  one eigenvalue of $\mathbb{L} + D\mathbb{G}(\tilde{u})$ has a positive real part, then, under some possible additional conditions, one obtain that $\tilde{u}$ is unstable. Let us define $\mathbb{L}(\nu)$ as 
\begin{align*}
    \mathbb{L}(\nu) \bydef \mathbb{L} - \nu I_d.
\end{align*}
Then, let $\nu_0 \in \mathbb{C}$ be an approximate eigenvalue and assume that $\mathbb{L}(\nu_0)$ is invertible so that the set-up presented in the previous section is applicable. Note that imposing that $\mathbb{L}(\nu_0)$ is invertible restricts the range of eigencouples we can hope to prove. Then, denote by $H^l$ the Hilbert space associated to $\mathbb{L}(\nu_0)$. Let $v_0 \in H^l_{\om}$ be an approximation of an eigenvector associated to $\nu_0$ and denote $x_0 \bydef (\nu_0, v_0)$. 

Let us denote $\phi_0 \bydef \mathbb{L}v_0 \in L^2_\om$ and define $\phi : H^l \to \mathbb{C}$ as 
\[
    \phi(u) \bydef -(\mathbb{L}(\nu_0)u,\phi_0)_2
\]
for all $u \in H^l$. Define the following zero finding problem $\overline{\mathbb{F}} : H_1 = \mathbb{C}\times H^l \to H_2 = \mathbb{C} \times L^2$ as 
\begin{equation}\label{eq : eigenvalue problem}
    \overline{\mathbb{F}}(x) \bydef \begin{pmatrix}(\phi_0-\mathbb{L}(\nu_0)v,\phi_0)_2\\
    \mathbb{L}(\nu)v + D\mathbb{G}(\tilde{u})v  \end{pmatrix} = 0
\end{equation}
for all $x = (\nu,v) \in H_1$. If the support of $\tilde{u}$ is not a subset of $\Omega_0$, we cannot apply directly the analysis of Section \ref{sec : approximate inverse constrained} to $D\overline{\mathbb{F}}(x_0)$. However, we  suppose instead that $\tilde{u}$ was obtained by applying Theorem \ref{th: radii polynomial} to \eqref{eq : f(u)=0 on H^l} with some $u_0 \in H^l_\om$. In practice, $\|\tilde{u}-u_0\|_l$ is supposedly small. Then we can define $\overline{\mathbb{F}}_0$ as 
\begin{align*}
    \overline{\mathbb{F}}_0(\nu,v) \bydef \begin{pmatrix}(\phi_0-\mathbb{L}(\nu_0)v,\phi_0)_2\\
    \mathbb{L}(\nu)v + D\mathbb{G}(u_0)v  \end{pmatrix}.
\end{align*}
Now we can apply Theorem \ref{th : approximation of inverse and norm constrained PDE} and build an approximate inverse $\oA$ for $D\overline{\mathbb{F}}_0(x_0)$. In particular, Theorem \ref{th : approximation of inverse and norm constrained PDE} allows to compute an upper bound for $\|I_d - \oA D\overline{\mathbb{F}}_0(x_0)\|_{H_1}$.  Moreover, the following lemma allows  to pass from $\overline{\mathbb{F}}$ to $\overline{\mathbb{F}}_0$ and obtain the needed bounds to apply Theorem \ref{th: radii polynomial} to $\eqref{eq : eigenvalue problem}$.

\begin{lemma}\label{lem : passage_T0_T}
Let $\oA : H_2 \to H_1$, then we have that
\[
    \left\|\oA\overline{\mathbb{F}}(x_0) \right\|_{H_1} \leq  \left\|\oA\overline{\mathbb{F}}_0(x_0) \right\|_{H_1} + \left\|\oA \begin{pmatrix}
    0\\
    D\mathbb{G}(\tilde{u})v_0-D\mathbb{G}(u_0)v_0
\end{pmatrix}\right\|_{H_1}.
    \]
  Moreover,
  \begin{align*}
    \left\|I_d - \oA D\oF(x_0)\right\|_{H_1} \leq \left\|I_d - \oA D\oF_0(x_0)\right\|_{H_1} + \left\|\oA\right\|_{H_2,H_1} \left\|D\mathbb{G}(\tilde{u}) - D\mathbb{G}(u_0)\right\|_{l,2}.
\end{align*}
\end{lemma}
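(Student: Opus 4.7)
The plan is to prove both inequalities by a straightforward triangle-inequality decomposition, taking advantage of the fact that $\oF$ and $\oF_0$ differ only in their second component and only through $D\mathbb{G}(\tilde u)$ versus $D\mathbb{G}(u_0)$ acting on the argument.

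For the first inequality, I would observe that by construction
\[
\oF(x_0) - \oF_0(x_0) = \begin{pmatrix} 0 \\ D\mathbb{G}(\tilde u)v_0 - D\mathbb{G}(u_0)v_0 \end{pmatrix},
\]
because in the first component the two operators are identical (both equal $(\phi_0 - \mathbb{L}v_0,\phi_0)_2$), and in the second component, evaluated at $x_0=(\nu_0,v_0)$, the linear and eigenvalue parts $\mathbb{L}v_0 - \nu_0 v_0$ cancel, leaving exactly the difference $D\mathbb{G}(\tilde u)v_0 - D\mathbb{G}(u_0)v_0$. Writing $\oA\oF(x_0)=\oA\oF_0(x_0)+\oA(\oF(x_0)-\oF_0(x_0))$ and applying the triangle inequality in $H_1$ then yields the claimed bound directly.

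For the second inequality, the same observation at the level of the derivative gives
\[
D\oF(x_0) - D\oF_0(x_0) = \begin{pmatrix} 0 & 0 \\ 0 & D\mathbb{G}(\tilde u) - D\mathbb{G}(u_0) \end{pmatrix} : H_1 \to H_2,
\]
since the only place $\tilde u$ or $u_0$ appears is in the term $D\mathbb{G}(\cdot)v$, whose Fréchet derivative in $v$ is $D\mathbb{G}(\cdot)$ itself. From $I_d - \oA D\oF(x_0) = (I_d - \oA D\oF_0(x_0)) - \oA(D\oF(x_0) - D\oF_0(x_0))$ and the triangle inequality, it then suffices to estimate $\|\oA(D\oF(x_0)-D\oF_0(x_0))\|_{H_1}$ by submultiplicativity as $\|\oA\|_{H_2,H_1}\,\|D\oF(x_0)-D\oF_0(x_0)\|_{H_1,H_2}$.

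The last ingredient is to show $\|D\oF(x_0)-D\oF_0(x_0)\|_{H_1,H_2} \leq \|D\mathbb{G}(\tilde u)-D\mathbb{G}(u_0)\|_{l,2}$. For any $(\mu,w)\in H_1$ the block structure above gives $\|(D\oF(x_0)-D\oF_0(x_0))(\mu,w)\|_{H_2}^2 = \|(D\mathbb{G}(\tilde u)-D\mathbb{G}(u_0))w\|_2^2 \leq \|D\mathbb{G}(\tilde u)-D\mathbb{G}(u_0)\|_{l,2}^2\|w\|_l^2 \leq \|D\mathbb{G}(\tilde u)-D\mathbb{G}(u_0)\|_{l,2}^2\|(\mu,w)\|_{H_1}^2$, which is the required operator-norm bound. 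There is no real obstacle here; the only care needed is to correctly identify the block-diagonal structure of $D\oF-D\oF_0$ so that the $\mu$-component contributes nothing and the norm on $H_1 \to H_2$ collapses to the $H^l\to L^2$ norm of $D\mathbb{G}(\tilde u)-D\mathbb{G}(u_0)$.
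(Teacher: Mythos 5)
Your proposal is correct and follows essentially the same route as the paper: identify that $\oF(x_0)-\oF_0(x_0)$ is the vector with zero first component and $D\mathbb{G}(\tilde u)v_0-D\mathbb{G}(u_0)v_0$ in the second, apply the triangle inequality, and then use the block-diagonal form of $D\oF(x_0)-D\oF_0(x_0)$ together with submultiplicativity to reduce the operator-norm bound to $\|D\mathbb{G}(\tilde u)-D\mathbb{G}(u_0)\|_{l,2}$. The only difference is that you spell out explicitly why the $H_1\to H_2$ norm of the block operator collapses to the $H^l\to L^2$ norm, a step the paper leaves implicit.
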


\begin{proof}
First, notice that \[\oF(x_0) - \oF_0(x_0) = \begin{pmatrix}
    0\\
    D\mathbb{G}(\tilde{u})v_0-D\mathbb{G}(u_0)v_0
\end{pmatrix}.\] Then, the first inequality is obtained using the triangle inequality. Now, we have
\begin{align*}
    \left\|I_d - \oA D\oF(x_0)\right\|_{H_1} \leq \left\|I_d - \oA D\oF_0(x_0)\right\|_{H_1} +\left\|\oA \left(D\oF(x_0)-D\oF_0(x_0)\right)\right\|_{H_1}.
\end{align*}
We conclude the proof using that
$
    D\oF(x_0)-D\oF_0(x_0) = \begin{pmatrix}
    0 & 0\\
    0 &   D\mathbb{G}(\tilde{u})- D\mathbb{G}(u_0)
    \end{pmatrix}.
$
\end{proof}
Notice that, using Lemma \ref{lem : passage_T0_T} combined with Theorem \ref{th : approximation of inverse and norm constrained PDE}, one can compute the bounds of Theorem \ref{th: radii polynomial} applied to $\eqref{eq : eigenvalue problem}$. 
We illustrate these computations in Section \ref{sec : kdv eigen} for the proof of eigencouples of the linearization of the Kawahara equation about a proven soliton. In particular, the proof of the eigencouples provides the necessary ingredients for the proof of stability of the soliton (cf. Proposition \ref{prop:stable}).

%\newpage

\section{Application to Kawahara equation}\label{sec:kawahara}
    
 We apply our techniques to a one-dimensional example: the Kawahara equation. This model can be seen as a standard third order KdV equation complexified with an extra fifth order term taking into account the surface tension. This is a model for the study of fluid dynamics which simplifies the Whitham equation. In particular, the coefficients $a(T)$ and $b(T)$ below come from the Taylor expansion of the full dispersion relation in the Whitham equation. Details on the study of such equations can be found in \cite{whitham2011linear}.

In fact, the Kawahara equation is known to have soliton solutions (traveling wave with constant velocity of propagation and vanishing at infinity). In particular, soliton solutions have been proven to exist in \cite{Weinstein_existence_dynamic_solitary} and \cite{LEVANDOSKY_stability_analysis}. The proof is theoretical and is obtained via a constrained minimization problem. The solutions are ground states and are proven to be exponentially decaying to zero. Moreover, some soliton in $\text{sech}^4(x)$ are known explicitly (see \cite{wazwaz2006new} and \cite{solitary2004147}). These solutions are obtained using a tanh-type of method and using auxiliary equations. Finally,  numerical experiments have also been conducted on the study of soliton solutions (see \cite{shock_wave_hoefer} or \cite{yusufouglu2008periodic}) in order to understand their formation and dynamics. Our results provide constructed proofs of solutions (based on a numerical approximation $u_0$) of the Kawahara equation. In this sense, we provide new results for the Kawahara equation and our proofs allow to make the bridge between some known theoretical and numerical results.

We  introduce the Bond number $T \geq 0$ and  look for solitons in
\begin{align}\label{eq : time dependent kdv}
    u_t + \frac{3}{2}uu_y + u_y + a(T)u_{yyy} + b(T)u_{yyyyy} = 0
\end{align}
where $a(T) \bydef  \frac{1}{6}(1-3T)$ and $b(T)\bydef \frac{1}{360}(19-30T -45T^2)$. By using the standard traveling wave change of variable $x = y-ct$ and integrating once (the constants of integration disappear as we look for solutions going to zero at infinity), we study the following ordinary differential equation
  \begin{equation*}
    (1-c)u + \frac{3}{4}u^2 + a(T)u_{xx} + b(T)u_{xxxx} = 0.
\end{equation*}  
Normalizing the linear part, we define $\lambda_1 \bydef -\frac{a(T)}{1-c}$, $\lambda_{2} \bydef \frac{b(T)}{1-c}$ and $\lambda_{3} \bydef \frac{3}{4(1-c)}$. Finally, we obtain the following zero finding problem
\begin{equation}\label{eq : kdv after normalization}
    \mathbb{F}(u)  \bydef u + \lambda_{3} u^2 - \lambda_1 u_{xx} + \lambda_{2} u_{xxxx} = 0
\end{equation}
where $\mathbb{F}(u) = \mathbb{L}u + \mathbb{G}(u)$ and  \[
\mathbb{L}u \bydef u - \lambda_1 u_{xx} + \lambda_{2} u_{xxxx}
\quad \text{and} \quad
\mathbb{G}(u) \bydef \lambda_{3} u^2.
\]
At this point, we verify the required Assumptions \ref{ass:A(1)} and \ref{ass : LinvG in L1}. First, denote by $l : \R \to \R$ the Fourier transform of $\mL$ and $l(\xi) \bydef  1 + \lambda_1 (2\pi\xi)^2 + \lambda_{2} (2\pi\xi)^4$ for all $\xi \in \mathbb{R}$. In particular, the linear part $\mL$ is invertible if and only if $(1-c)-a(T)\xi^2 + b(T)\xi^4 >0$ for all $\xi \in \mathbb{R}$. Studying the previous fourth order polynomial, if
\begin{equation}\label{eq : condition kdv T and c}
    \text{$\frac{1}{3} < T \le \frac{-10+\sqrt{480}}{30}$
%$0.397 \approx \frac{-10+\sqrt{480}}{30} > T\geq 0$ 
and  $ c \le 1 - \frac{a(T)^2}{4b(T)}$},
\end{equation}
then   Assumption \ref{ass:A(1)} is satisfied and we can define the Hilbert space $H^l$ given in \eqref{eq:H^l}.  Consequently, we suppose in the rest of this paper that $c$ and $T$ are chosen such that \eqref{eq : condition kdv T and c} is satisfied. In particular, having $T> \frac{1}{3}$ allows to obtain $\lambda_1,\lambda_2, \lambda_3 >0$ and $l(\xi) \geq 1$ for all $\xi \in \R$, simplifying the analysis. Then, we have $\mathbb{G}(u) = \lambda_3 I_d (u) I_d (u) = \mathbb{G}^1_{2}(u)\G^2_2(u)$ where $\G^1_2 \bydef \lambda_3 I_d$ and $\G^2_2 \bydef I_d$. Using the notations of Assumption \ref{ass : LinvG in L1}, we have $g^1_2 = \frac{\lambda_3}{l}$ and $g^2_2 = \frac{1}{l}$. Consequently, Assumption \ref{ass : LinvG in L1} is satisfied if and only if $\frac{1}{l} \in L^1$, which is trivial to verify. Both Assumptions \ref{ass:A(1)} and \ref{ass : LinvG in L1} are satisfied and the analysis derived in the previous Sections is applicable to \eqref{eq : kdv after normalization}.

Note that the set of solutions in $H^l$ of equation \eqref{eq : kdv after normalization} possesses a natural translation invariance. One way to fix this invariance is to restrict to even solutions. Moreover, we look for real valued solutions as the Kawahara equation is real-valued.   
Therefore we define  $H^l_e$ as the subspace of even functions in $H^l$
\[
    H^l_e \bydef \{u \in H^l : u(x) = u(-x) \in \R, \text{ for all } x \in \mathbb{R}\}
\]
and look for zeros of $\mathbb{F}$ in $H^l_e.$ 
Similarly, $L^2_e$ is  the subspace of even functions in $L^2$. We also define the restriction of our operators on the even restriction.
We denote $\mathbb{L}_e : H^l_e \to L^2_e$ the restriction of $\mathbb{L}$, $\mathbb{F}_e, \mathbb{G}_e : H^l_e \to L^2_e$ the ones of $\mathbb{F}$ and $\mathbb{G}$ respectively. We define in the same manner the even restrictions for the operators and spaces on Fourier series.
Equation \eqref{eq : kdv after normalization} is turned into 
\begin{equation}\label{eq:kdv_eq}
\mathbb{F}_e(u)  = 0 \text{~~ and ~~} u \in H^l_e. 
\end{equation}
Now, denote by $\Omega_0 \bydef (-d,d)$ our domain of study (for some $d>0$) and define $X^l_e$ and $\ell^2_e$ as 
\begin{align*}
    X^l_e \bydef \left\{U = (u_n)_{n \in \mathbb{N}\cup\{0\}}, ~~ \|L_eU\|_2 < \infty\right\} \text{ and } \ell^2_e \bydef \left\{U = (u_n)_{n \in \mathbb{N}\cup\{0\}}, ~~ \|U\|_2 < \infty\right\},
\end{align*}
where $L_e : X^l_e \to \ell^2_e$ is the diagonal infinite matrix with entries $\left(l(\tilde{n})\right)_{n \in \mathbb{N}\cup\{0\}}$ on the diagonal. Similarly, denote by $F_e$ and $G_e$ the operators corresponding to $\mathbb{F}_e$ and $\mathbb{G}_e$, indexed on $\mathbb{N}\cup\{0\}$. Moreover, define $\gamma_e : L^2_e \to \ell^2_e$ and $\gamma_e^\dagger : \ell^2_e \to L^2_e$ as 
\begin{align*}
     \left(\gamma_e(u)\right)_n  &\bydef  \begin{cases}
     \displaystyle\frac{1}{|\om|}\int_\om u(x)dx &\text{ if } n=0\\
       \displaystyle \frac{\sqrt{2}}{|\om|}\int_\om u(x)e^{-2\pi i \tilde{n}\cdot x}dx &\text{ otherwise}
     \end{cases}\\
     \gamma^\dagger_e(U)(x) &\bydef \cha(x) \left(u_0 + \sqrt{2}\sum_{n \in \mathbb{N}}u_n\cos(2\pi \tilde{n} x)\right).
\end{align*}
Then, $\sqrt{|\om|}\gamma_e : L^2_{e,\om} \to \ell^2_e$ and $\sqrt{|\om|}\gamma_e^\dagger :\ell^2_e \to L^2_{e,\om}$ are isometric isomorphism. Similarly, $\Gamma_e : \mathcal{B}(L^2_e) \to \mathcal{B}(\ell^2_e)$ and $\Gamma_e^\dagger : \mathcal{B}(\ell^2_e) \to \mathcal{B}(L^2_e)$ can be defined as in \eqref{def : Gamma and Gamma dagger} with the use of $\gamma_e$ and $\gamma_e^\dagger$. Consequently, the analysis derived in the previous sections is applicable.
\begin{remark}
    There exists values for $c$ and $T$, other than the ones given in \eqref{eq : condition kdv T and c}, for which $l(\xi)>0$ for all $\xi \in \R.$ For these values, the analysis derived in Section \ref{sec:kawahara} is still applicable under some mild modifications.
\end{remark}

\subsection{Computation of the theoretical bounds} \label{sec:Kawahara_bounds}

 As in Section \ref{ssec : bounds pde case}, let $N \in \mathbb{N}$ be the size of the numerical projection and denote by $U_0$ our numerical guess that was projected on $X^{N,4}_0 \bigcap X^l_e$ and
$u_0 \in H^l_e $ its function representation on $\mathbb{R}$. In particular, supp$(u_0) \subset \overline{\Omega_0}$ by construction. Then, define the projections $\pi^N_e$ and $\pi_{N,e}$ as follows
\vspace{-.2cm}
\begin{align}
    \left(\pi^N_e(U)\right)_n  =  \begin{cases}
          U_n,  & n \in I_e^N \\
              0, &n \notin I_e^N
    \end{cases} 
     ~~ \text{and} ~~
     \left(\pi_{N,e}(U)\right)_n  =  \begin{cases}
          0,  & n \in I_e^N \\
              U_n, &n \notin I_e^N
    \end{cases}
    \vspace{-.2cm}
\end{align}
for all $n \in \mathbb{N}\cup\{0\}$, where $I^N_e \bydef \{n \in \mathbb{N}\cup\{0\},~ n \leq N\}.$ Using the construction of Section \ref{ssec : construction of the approximate inverse}, we start by numerically constructing an approximate inverse $B^N_e = \pi^N_e B^N_e \pi^N_e$ for $\pi^N_e DF_e(U_0)L^{-1}_e \pi^N_e$. Then, we construct $\mathbb{A} : H^l_e \to L^2_e$ as 
\begin{align*}
    \mathbb{A} \bydef \mathbb{L}^{-1}_e \mathbb{B}, ~~ \text{ where } ~\mathbb{B} \bydef \out + \Gamma^\dagger_e \left(\pi_{N,e} + B^N_e\right).
\end{align*}
We present in the following subsections the explicit computations of the bounds $\mathcal{Z}_1$ and $\mathcal{Z}_2$ using the analysis derived in Sections \ref{sec:bound_inverse} and \ref{sec : computer assisted analysis}. Note that the computation of the bound $\mathcal{Y}_0$ is already explicited in Lemma \ref{lem : bound Y0}.

\subsubsection{Computation of \boldmath$\mathcal{Z}_2$\unboldmath}

We begin with the computation of the bound $\mathcal{Z}_2$. First, 
notice that the proof of Lemma \ref{Banach algebra} implies that for $\kappa \geq \max_{\xi \in \R} \frac{\lambda_3}{l(\xi)} \|\frac{1}{l}\|_2$ then
\begin{equation}\label{eq : banach algebra kdv}
    \|\lambda_3 w v\|_2 \leq  \kappa \|w\|_l\|v\|_l
\end{equation} for all $w, v \in H^l$. In order to obtain an explicit value for $\kappa$, we use the following Proposition \ref{prop: estimates_on_l}. 
\begin{prop}\label{prop: estimates_on_l}
Under \eqref{eq : condition kdv T and c}, we have
\[
\max_{\xi \in \mathbb{R}}{\frac{1}{l(\xi)}} = 1 ~~ \text{ and } ~~ \left\|\frac{1}{l}\right\|_2 \leq \sqrt{\frac{3}{8\sqrt{2}\lambda_2^{\frac{1}{4}}}}.
\]
\end{prop}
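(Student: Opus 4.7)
The plan is to handle the two claims separately, each being essentially a direct computation once we use the positivity of the coefficients granted by \eqref{eq : condition kdv T and c}.

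First, I would verify that under \eqref{eq : condition kdv T and c} both $\lambda_1>0$ and $\lambda_2>0$. Indeed, $T>\frac{1}{3}$ gives $a(T)=\frac{1}{6}(1-3T)<0$, and the constraint on $c$ implies $1-c>0$ and $b(T)>0$, so $\lambda_1=-a(T)/(1-c)>0$ and $\lambda_2=b(T)/(1-c)>0$. Consequently,
\[
l(\xi)=1+\lambda_1(2\pi\xi)^2+\lambda_2(2\pi\xi)^4\ge 1
\]
for every $\xi\in\mathbb{R}$, with equality exactly at $\xi=0$. This gives $\max_{\xi\in\mathbb{R}}\frac{1}{l(\xi)}=1$, establishing the first identity.

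For the second claim, I would drop the positive middle term and change variables. Since $\lambda_1(2\pi\xi)^2\ge 0$,
\[
\left\|\frac{1}{l}\right\|_2^2=\int_{\mathbb{R}}\frac{d\xi}{l(\xi)^2}\le\int_{\mathbb{R}}\frac{d\xi}{\bigl(1+\lambda_2(2\pi\xi)^4\bigr)^2}.
\]
The substitution $u=2\pi\lambda_2^{1/4}\xi$ (so that $\lambda_2(2\pi\xi)^4=u^4$ and $d\xi=du/(2\pi\lambda_2^{1/4})$) yields
\[
\int_{\mathbb{R}}\frac{d\xi}{\bigl(1+\lambda_2(2\pi\xi)^4\bigr)^2}=\frac{1}{2\pi\lambda_2^{1/4}}\int_{\mathbb{R}}\frac{du}{(1+u^4)^2}.
\]

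It remains to evaluate $I\bydef\int_{\mathbb{R}}\frac{du}{(1+u^4)^2}=2\int_0^\infty\frac{du}{(1+u^4)^2}$. Using the standard Beta-function identity $\int_0^\infty\frac{u^{s-1}}{(1+u^p)^q}du=\frac{1}{p}B\!\left(\tfrac{s}{p},q-\tfrac{s}{p}\right)$ with $s=1$, $p=4$, $q=2$, together with $\Gamma(7/4)=\tfrac{3}{4}\Gamma(3/4)$ and the reflection formula $\Gamma(1/4)\Gamma(3/4)=\pi/\sin(\pi/4)=\pi\sqrt{2}$, I obtain
\[
\int_0^\infty\frac{du}{(1+u^4)^2}=\frac{1}{4}\Gamma(1/4)\Gamma(7/4)=\frac{3}{16}\Gamma(1/4)\Gamma(3/4)=\frac{3\pi\sqrt{2}}{16},
\]
hence $I=\frac{3\pi\sqrt{2}}{8}$. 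Plugging this back,
\[
\left\|\frac{1}{l}\right\|_2^2\le\frac{1}{2\pi\lambda_2^{1/4}}\cdot\frac{3\pi\sqrt{2}}{8}=\frac{3\sqrt{2}}{16\lambda_2^{1/4}}=\frac{3}{8\sqrt{2}\,\lambda_2^{1/4}},
\]
which upon taking square roots gives the stated bound. There is no real obstacle here; the only mildly delicate point is the evaluation of $I$, which I would either do by Beta/Gamma as above or equivalently by a partial-fraction decomposition of $1/(1+u^4)^2$ after factoring $1+u^4=(u^2-\sqrt{2}u+1)(u^2+\sqrt{2}u+1)$.
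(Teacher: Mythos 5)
Your proposal is correct and follows essentially the same route as the paper: both establish $l(\xi)\ge 1$ from the positivity of $\lambda_1,\lambda_2$ under \eqref{eq : condition kdv T and c}, and both bound $\|1/l\|_2^2$ by discarding the $\lambda_1$ term and evaluating $\int_{\mathbb{R}}(1+\lambda_2(2\pi\xi)^4)^{-2}d\xi = \frac{3}{8\sqrt{2}\lambda_2^{1/4}}$. The only difference is that you carry out the quartic integral explicitly via the Beta/Gamma identity, whereas the paper simply states its value.
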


\begin{proof}
The computation for $\displaystyle\max_{\xi \in \mathbb{R}}{\frac{1}{l(\xi)}}$ is a simple study of the extrema of  the polynomial $l$. For the value of $\|\frac{1}{l}\|_2$ we notice that
\begin{align*}
  \int_\mathbb{R} \frac{1}{l(\xi)^2} d\xi = \int_\mathbb{R} \frac{(1+\lambda_2(2\pi\xi)^4)^2}{(1+\lambda_2(2\pi\xi)^4)^2l(\xi)^2} d\xi &\leq 
  \int_\mathbb{R} \frac{1}{(1+\lambda_2(2\pi\xi)^4)^2} d\xi = \frac{3}{8\sqrt{2}\lambda_{2}^{\frac{1}{4}}}.
\end{align*}
\end{proof}

Using Proposition~\ref{prop: estimates_on_l}, we fix some $\kappa >0$ such that 
\begin{equation}\label{eq : kappa in kdv}
    \kappa \geq \lambda_3\sqrt{\frac{3}{8\sqrt{2}\lambda_2^{\frac{1}{4}}}}
\end{equation}
and we have $\|\lambda_3 w v\|_2 \leq \kappa \|w\|_l\|v\|_l$ for all $w,v \in H^l$.  Moreover, we can compute an upper bound for $\mathcal{Z}_2$ defined in Theorem~\ref{th: radii polynomial}.

\begin{lemma}\label{lem : Z_2 bound Kdv}
Let $r>0$ and let $\mathcal{Z}_2(r)$ be defined as 
\begin{align*}
    \mathcal{Z}_2(r) \geq 2 \kappa \max\left\{1,\left\|B^N_e\right\|_2\right\},
\end{align*}
where $\kappa$ satisfies \eqref{eq : kappa in kdv}.
Then, $\|\mathbb{A}\left(D\mathbb{F}_e(u_0)-D\mathbb{F}_e(v)\right)\|_l \leq \mathcal{Z}_2(r) r$ for all $v \in \overline{B_r(u_0)} \subset H^l_e$.
\end{lemma}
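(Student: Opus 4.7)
The plan is to avoid invoking the mean value theorem (as in Lemma~\ref{lem : Z2 bound}) and instead exploit the fact that the nonlinearity $\mathbb{G}_e(u) = \lambda_3 u^2$ is quadratic, so the map $v \mapsto D\mathbb{F}_e(v)$ is \emph{affine} in $v$. Concretely, for every $w \in H^l_e$ and every $v \in H^l_e$,
\begin{equation*}
\bigl(D\mathbb{F}_e(v) - D\mathbb{F}_e(u_0)\bigr)w \;=\; 2\lambda_3(v-u_0)\,w,
\end{equation*}
so no supremum over an intermediate point is needed and the computation reduces to a single multiplication estimate.

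Next I would convert the $H^l$ operator norm into an $L^2$ operator norm using the isometry \eqref{eq : norm_equality_Hl_L2}. Since $\mathbb{A} = \mathbb{L}^{-1}\mathbb{B}$, for any $g \in L^2$,
\begin{equation*}
\|\mathbb{A}g\|_l \;=\; \|\mathbb{L}\mathbb{A}g\|_2 \;=\; \|\mathbb{B}g\|_2.
\end{equation*}
Applying this to $g = (D\mathbb{F}_e(v) - D\mathbb{F}_e(u_0))w = 2\lambda_3(v-u_0)w$ gives
\begin{equation*}
\bigl\|\mathbb{A}(D\mathbb{F}_e(v) - D\mathbb{F}_e(u_0))w\bigr\|_l \;\le\; \|\mathbb{B}\|_2 \,\bigl\|2\lambda_3(v-u_0)w\bigr\|_2.
\end{equation*}

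The two ingredients are then pulled from results proved earlier. For the first factor, the computation in the proof of Lemma~\ref{lem:computation_norm_A}, together with the fact that by construction $B_\om = \pi^N B_\om \pi^N$, yields $\|\mathbb{B}\|_2 = \max\{1,\|B\|_2\} = \max\{1,\|\pi^N + B_\om\|_2\}$. For the second factor, the Banach-algebra type inequality \eqref{eq : banach algebra kdv} with the constant $\kappa$ defined in \eqref{eq : kappa in kdv} gives
\begin{equation*}
\|2\lambda_3(v-u_0)w\|_2 \;\le\; 2\kappa\,\|v-u_0\|_l\,\|w\|_l \;\le\; 2\kappa\, r\, \|w\|_l
\end{equation*}
for every $v \in \overline{B_r(u_0)}$. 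Combining the two bounds and taking the supremum over $w \in H^l_e$ with $\|w\|_l = 1$ yields $\|\mathbb{A}(D\mathbb{F}_e(v) - D\mathbb{F}_e(u_0))\|_l \le 2\kappa\,\max\{1,\|\pi^N+B_\om\|_2\}\,r = \mathcal{Z}_2(r)\,r$, as claimed.

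There is no real obstacle: the only non-trivial input is Lemma~\ref{Banach algebra} specialized to the quadratic nonlinearity (which provides the explicit multiplier $\kappa$ depending only on $\|1/l\|_\infty$ and $\|1/l\|_2$, both estimated in Proposition~\ref{prop: estimates_on_l}). Everything else is a direct assembly of earlier identities. The only minor care to take is to verify that the restriction to even functions does not change any constants, which is immediate since $H^l_e \subset H^l$ and all the inequalities above are inherited from the ambient space.
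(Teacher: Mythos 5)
Your proof is correct. Where the paper obtains this lemma by specializing its general bound (Lemma~\ref{lem : Z2 bound}): it invokes the mean value theorem for Banach spaces, reduces everything to $\sup_{h}\|D^2\mathbb{F}_e(h)\|_{\mathcal{B}(H^l,L^2)}$, notes that $D^2\mathbb{F}_e(h)w=2\lambda_3\mathbb{w}$ is constant in $h$, and then bounds it by $2\kappa$ via \eqref{eq : banach algebra kdv} — you instead exploit that $\mathbb{G}_e$ is quadratic, so $v\mapsto D\mathbb{F}_e(v)$ is affine and $(D\mathbb{F}_e(v)-D\mathbb{F}_e(u_0))w=2\lambda_3(v-u_0)w$ exactly, with no intermediate point needed. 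After that, both arguments coincide: the isometry $\|\mathbb{A}g\|_l=\|\mathbb{B}g\|_2$, the identity $\|\mathbb{B}\|_2=\max\{1,\|B\|_2\}=\max\{1,\|\pi^N+B_\om\|_2\}$ from Lemma~\ref{lem:computation_norm_A} together with $B_\om=\pi^N B_\om\pi^N$, and the multiplication estimate $\|2\lambda_3(v-u_0)w\|_2\le 2\kappa\|v-u_0\|_l\|w\|_l$ with $\kappa$ from \eqref{eq : kappa in kdv} and Proposition~\ref{prop: estimates_on_l}. What your route buys is a more elementary argument for this specific equation — in particular you sidestep the Banach-space mean value theorem (which in the paper's Lemma~\ref{lem : Z2 bound} is stated as an equality and is really only valid as an inequality), since the quadratic structure gives the difference of derivatives in closed form; what the paper's route buys is generality, since the second-derivative formulation applies verbatim to nonlinearities of any degree $N_{\mathbb{G}}$, whereas your identity would need to be re-expanded for higher-order terms. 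Your closing remark about the even subspace is also the right observation: all estimates are inherited from $H^l$ by restriction, so no constants change.
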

\begin{proof}
Using Lemma \ref{lem : Z2 bound}, we can choose 
\begin{align*}
    \mathcal{Z}_2(r) \geq \max\left\{1, \|B^N_e\|_2\right\}\sup_{h \in B_r(u_0)}\|D^2\mathbb{F}_e(h)\|_{\mathcal{B}(H^l,L^2)}.
\end{align*}
However, we have $D^2\mathbb{F}_e(h)w = 2\lambda_3 \mathbb{w}$ for all $w \in H^l_e$ and all $h \in B_r(u_0).$ Moreover, using \eqref{eq : banach algebra kdv}, we have
$
    \|\lambda_3 w v\|_2 \leq \kappa \|w\|_l \|v\|_l 
$
for all $v \in H^l_e$. Consequently, we get 
\[
    \sup_{h \in B_r(u_0)}\|D^2\mathbb{F}_e(h)\|_{\mathcal{B}(H^l,L^2)} \leq 2 \kappa. 
\]
\end{proof}

 \subsubsection{Computation of \boldmath$\mathcal{Z}_1$\unboldmath}

Using Lemma \ref{lem : bound Z1}, one can explicitly compute a value of $Z_1$. To obtain a value for $\mathcal{Z}_1$, it remains to compute an upper bound $\mathcal{Z}_u$. This is achieved using Theorem \ref{th : Zu computation}.
First, using the definition of $\mathbb{G}$ in the Kawahara equation, we have 
\begin{align*}
    D\mathbb{G}(u_0) = 2\lambda_3 \mathbb{u}_0 \bydef \mathbb{v}_0 \text{ and } \mathbb{V}_0 \bydef D{G}(U_0),
\end{align*}
where $V_0 = \gamma_e(v_0) \in X^l_e$. Now, Theorem \ref{th : Zu computation} requires the computations of $a>0$ and of the functions $(f_\alpha)$ defined in Lemma~\ref{lem : computation of falpha}. However, as $\mathbb{G}$ does not possess derivatives, we only need to compute 
 $f_0(x) \bydef \mathcal{F}^{-1}(\frac{1}{l(\xi)})$. Now, straightforward computations show that under \eqref{eq : condition kdv T and c}, the roots of  $l$ are given by $$\{b \pm ia, -b \pm ia\}$$ for some $a>0$ and some $b \geq 0$. Note that $a\neq 0$ since $l>0$ by assumption.
The next lemma then provides the computation of $f_0$ using some basic Fourier analysis.
\begin{lemma}\label{lem:kdv_f}
Let $f_0(x) \bydef  \mathcal{F}^{-1}(\frac{1}{l(\xi)}) (x)$ and suppose that $l$ has 4 roots $\{b \pm ia, -b \pm ia\}$ with $a>0$ and $b \geq 0$. Then we have
\begin{align*}
   &f_0(x) = \frac{e^{-a|x|}}{4\lambda_{2} ab(a^2+b^2) }(sign(x) a\sin(bx) + b \cos(bx)).
\end{align*}
In particular, 
\begin{align}
    |f_0(x)| \leq C_0e^{-a|x|}
\end{align}
for all $x \in \mathbb{R}$ where $C_0 \bydef \frac{a+b}{4|\lambda_2 |ab(a^2+b^2)}.$
\end{lemma}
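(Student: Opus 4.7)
The plan is to compute $f_0$ by residues and then derive the stated pointwise bound. Since $l$ is a degree-four polynomial with no real zeros and the roots of $l(\xi)=0$ are exactly $2\pi\xi \in \{\pm b \pm ia\}$, one can factor
\begin{equation*}
l(\xi) = \lambda_2\bigl((2\pi\xi-b)^2+a^2\bigr)\bigl((2\pi\xi+b)^2+a^2\bigr),
\end{equation*}
so that $1/l(\xi)$ decays like $|\xi|^{-4}$ and $\tfrac{1}{l}\in L^1$. Hence for each $x\in\R$, $f_0(x) = \int_\R \frac{e^{i2\pi x\xi}}{l(\xi)}\,d\xi$ is a bona fide Lebesgue integral.

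For $x>0$, I would close the contour in the upper half-plane, where $e^{i2\pi x\xi}$ decays. The only poles inside are $\xi_1=(b+ia)/(2\pi)$ and $\xi_2=(-b+ia)/(2\pi)$; each is simple, so $f_0(x)=2\pi i\bigl(\mathrm{Res}_{\xi_1}+\mathrm{Res}_{\xi_2}\bigr)\tfrac{e^{i2\pi x\xi}}{l(\xi)}$. To evaluate $l'$ at these poles, note that $\frac{dl}{d\xi}=4\pi y(2\lambda_2 y^2+\lambda_1)$ with $y=2\pi\xi$, and the Vieta relation $a^2-b^2=\lambda_1/(2\lambda_2)$ (obtained by identifying coefficients in $\lambda_2 y^4+\lambda_1 y^2+1 = \lambda_2((y-b)^2+a^2)((y+b)^2+a^2)$) gives $2\lambda_2 y^2+\lambda_1=\pm 4ab\lambda_2 i$ at $y=\pm b+ia$. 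Summing the two residues and using $\tfrac{1}{ib-a}=-\tfrac{a+ib}{a^2+b^2}$ and $\tfrac{1}{a+ib}=\tfrac{a-ib}{a^2+b^2}$ collapses the combination $-(a+ib)e^{ibx}+(a-ib)e^{-ibx}$ into $-2i(a\sin(bx)+b\cos(bx))$, yielding
\begin{equation*}
f_0(x) = \frac{e^{-ax}}{4\lambda_2 ab(a^2+b^2)}\bigl(a\sin(bx)+b\cos(bx)\bigr), \qquad x>0.
\end{equation*}

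For $x<0$, rather than repeating the contour argument in the lower half-plane, I would invoke the evenness $f_0(-x)=f_0(x)$, which follows from $l(-\xi)=l(\xi)$ via the change of variable $\xi\mapsto-\xi$ in the inverse Fourier integral. Writing $f_0(x)=f_0(|x|)$ then produces the factor $\mathrm{sign}(x)$ in front of $\sin(bx)$ (since $\sin(b|x|)=\mathrm{sign}(x)\sin(bx)$) and leaves $\cos(bx)$ unchanged, giving the stated formula for all $x\in\R$. Finally, the pointwise bound is immediate from $|\sin|,|\cos|\le 1$:
\begin{equation*}
|f_0(x)| \le \frac{e^{-a|x|}}{4\lambda_2 ab(a^2+b^2)}(a+b) = C_0\,e^{-a|x|}.
\end{equation*}
The only genuine bookkeeping step is the residue simplification; the rest is symmetry and the triangle inequality.
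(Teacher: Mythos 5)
Your proof is correct, and it reaches the formula by a route that differs in its mechanics from the paper's. The paper first splits $\frac{1}{l}$ by partial fractions into the two quadratic pieces $\frac{1}{4\pi^2\xi^2-(b\pm ia)^2}$, then quotes a transform table (Poularikas, Section 2.3.14, after a $2\pi$-rescaling identity) for the inverse Fourier transform of each piece — the case distinction $x\geq 0$ versus $x\leq 0$ is carried along through the indicator functions appearing in the table entry — and finally recombines. You instead compute $\mathcal{F}^{-1}(1/l)$ directly by the residue theorem: close the contour in the upper half-plane for $x>0$ (justified since $1/l$ decays like $|\xi|^{-4}$), evaluate $l'$ at the two simple poles using the Vieta identity $a^2-b^2=\lambda_1/(2\lambda_2)$ so that $2\lambda_2 y^2+\lambda_1=\pm 4ab\lambda_2 i$ at $y=\pm b+ia$, and then recover $x<0$ from the evenness of $l$, which cleanly produces the $\mathrm{sign}(x)$ factor via $\sin(b|x|)=\mathrm{sign}(x)\sin(bx)$. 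Your residue bookkeeping checks out (the combination $-(a+ib)e^{ibx}+(a-ib)e^{-ibx}=-2i(a\sin(bx)+b\cos(bx))$ is right and yields exactly the stated constant $\tfrac{1}{4\lambda_2 ab(a^2+b^2)}$), and the bound $|a\sin(bx)+b\cos(bx)|\le a+b$ gives $C_0$. What your approach buys is self-containedness — no reliance on an external transform table — and a symmetry argument in place of tracking indicator functions; what the paper's approach buys is avoiding contour estimates altogether. One shared caveat (not a gap specific to you): both the stated formula and both derivations implicitly need $b\neq 0$, since $b$ appears in denominators and the poles would be double for $b=0$.
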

\begin{proof}
The proof can be found in \cite{poularikas2018transforms} (Section 2.3.14) where Fourier transforms of rational functions are presented.
\end{proof}
Now that the constants $a$ and $C_0$ are given in the previous Lemma \ref{lem:kdv_f}, the bound $\mathcal{Z}_u$ can be computed using Theorem \ref{th : Zu computation}.

\begin{lemma}
Let $C_0$ and $a$ be defined in Lemma \ref{lem:kdv_f}. Moreover, let $E_{2,e} \bydef \gamma_e\left(f_{E_2}\right)$ where $f_{E_2}(x) \bydef \cha(x) \cosh(2ax)$ for all $x \in \om$. Moreover, define
\begin{align*}
    \mathcal{Z}_{u,1} \bydef   \|\mathbb{1}_{\R^m \setminus \om } ((\iL_e)^*-\Gamma^\dagger\left(L^{-*}_e\right))\mathbb{v}_0^*\|_2 ~~ \text{ and } ~~
     \mathcal{Z}_{u,2} \bydef  \|\mathbb{1}_{\om } ((\iL_e)^*-\Gamma^\dagger\left(L^{-*}_e\right))\mathbb{v}_0^*\|_2.
\end{align*}
Then $E_{2,e}$ can be computed explicitly using \eqref{eq : coefficients of E1 and E2} and
\begin{align*}
    (\mathcal{Z}_{u,1})^2 \leq |\om|\frac{C_0^2e^{-2ad}}{a}(V_0,V_0*E_{2,e})_2 ~~ \text{ and } ~~
    (\mathcal{Z}_{u,2})^2 \leq (\mathcal{Z}_{u,1})^2 +   e^{-4ad}C(d)C_0^2|\om| (V_0,V_0*E_{2,e})
\end{align*}
where 
\[C(d) \bydef 4d + \frac{4e^{-ad}}{a(1-e^{-\frac{3ad}{2}})} + \frac{2}{a(1-e^{-2ad})}.\]
If $\mathcal{Z}_u \geq \max\{1, \|B^N_e\|_2\}\left((\mathcal{Z}_{u,1})^2 + (\mathcal{Z}_{u,2})^2\right)^{\frac{1}{2}}$, then $\mathcal{Z}_u$ satisfies $\|(\mathbb{L}_e^{-1}-\Gamma^\dagger_e\left(L^{-*}_e\right))\mathbb{v}_0\mathbb{B}^*\|_2\leq\mathcal{Z}_{u}$.
\end{lemma}

\begin{proof}
First, notice that since $l$ is real valued, then $\mathbb{L}_e^* = \mathbb{L}_e$ and  $L_e^*  = L_e$. Similarly, we have $v_0 = v_0^*$ as $u_0$ is real-valued. Therefore, $\|((\mathbb{L}_e^{-1})^{*}-\Gamma^\dagger\left(L^{-*}_e\right))\mathbb{v}_0^*\|_2 = \|(\mathbb{L}_e^{-1}-\Gamma^\dagger\left(L^{-*}_e\right))\mathbb{v}_0\|_2$.
    Let $u \in L^2_e$ such that $\|u\|_2 = 1$. Then, define $v \bydef v_0 u$. By construction we have $v \in L^2_e$ and
     supp$(v) \subset \overline{\Omega_0}.$ Using Theorem \ref{th : Zu computation} and more specifically equation \eqref{eq : quantity needed for Z1}, we need to compute

\[
  \int_\om \int_\om |v(x)v(z)| \left( \sum_{n \in \mathbb{Z}, n \neq 0} I_n(x,z) \right) dx dz
   = 2\int_\om \int_\om |v(x)v(z)| \left( \sum_{n=1}^\infty I_n(x,z) \right) dx dz,
\]
where 
\[
I_n(x,z) \bydef \int_{\mathbb{R}\setminus (\om \cup (\om +2dn))} e^{-a|y-x|}e^{-a|y-2dn-z|}dy,
\]
and where we used that $v \in L^2_e$.
Notice that 
\begin{align*}
        I_n(x,z) &= \int_{\mathbb{R}} e^{-a|y-x|}e^{-a|2dn+z-y|}dy - \int_{\om \bigcup (\om+2dn) } e^{-a|y-x|}e^{-a|2dn+z-y|}dy  \\
        & = 2d(n-1)e^{-a(2dn+z-x)} + \frac{e^{-2ad(n+1)}}{2a}\left(e^{-a(x+z)}+e^{a(x+z)}\right)
\end{align*}
where the last equality follows from elementary computations. Now, notice that since $v \in L^2_e$, we have
\begin{align}\label{Z1 kdv step 1}
\nonumber
    \int_\om \int_\om |v(x) v(z)|  \left(2d(n-1)e^{-a(2dn+z-x)} + \frac{e^{-2ad(n+1)}}{2a}\left(e^{-a(x+z)}+e^{a(x+z)}\right)\right)dxdz\\
    = \left(2d(n-1)e^{-2adn}+ \frac{e^{-2ad(n+1)}}{a}\right)\left(\int_\om |v(x)|e^{ax}dx \right)^2.
\end{align}
Then, using Cauchy-Schwartz inequality, we get
\begin{align*}
    \left(\int_\om |v(x)|e^{ax}dx \right)^2  =  \left(\int_\om |v_0(x)u(x)|e^{ax}dx \right)^2  
    \leq \|u\|^2_2 \int_\om v_0(x)^2e^{2ax}dx 
    \leq  \int_\om v_0(x)^2e^{2ax}dx.
\end{align*}
 Moreover, using that $v_0 \in L^2_e$, we have
\begin{align}\label{Z1 parseval kdv}
    \int_\om v_0(x)^2e^{2ax}dx = \int_\om v_0(x)^2e^{-2ax}dx = \int_\om v_0(x)^2\cosh(2ax)dx = |\om| (V_0, V_0*E_{2,e})
\end{align}
where we used Parseval's identity. Then, notice that 
\begin{align}\label{sum of the Ik}
    \sum_{n=1}^\infty \left(2d(n-1)e^{-2adn} + \frac{e^{-2ad(n+1)}}{a}\right) \leq C(d)e^{-4ad}.
\end{align}
Combining \eqref{Z1 kdv step 1}, \eqref{Z1 parseval kdv} and \eqref{sum of the Ik}, we conclude the proof using Theorem~\ref{th : Zu computation} and Lemma~\ref{lem : bound Z1}.
\end{proof}

\subsection{Numerical results}\label{sec ; numerical results}

 Let $T =0.35$ and $c = 0.9$ (in particular $\frac{1}{3}<T<\frac{-10+\sqrt{480}}{30}$ and $c < 1- \frac{a(T)^2}{4b(T)}$) and define $w_1 \bydef \frac{-\lambda_1}{2\lambda_{2}}$ and $w_2 \bydef \frac{\sqrt{|\lambda_1^2-4\lambda_{2}|}}{4\lambda_{2}}$. Then, denote
\[
a \bydef \sqrt{\frac{\sqrt{w_1^2+4w^2_2}-w_1}{2} } ~~ \text{ and } ~~ b \bydef \frac{w_2}{a}.
\] 
It is well know that the classical KdV equation $
u - \lambda_1 u'' + \lambda_3 u^2 = 0
$
has even soliton solutions in $\text{sech}^2(x)$. Using these solutions, we can initialize Newton's method and then converge to some approximate solutions of the Kawahara equation. That enables us to build a vector of Fourier series $\tilde{U}_0$. Then, we need to project $\tilde{U}_0$ into the trace-free functions of order 4. In particular, the objects introduced in Section \ref{ssec : finite dimensional trace theorem} possess a restriction to even function. Indeed, first notice that the first and third derivatives of a periodic and even function vanish at $x=d$. Therefore, we only need to ensure that the function and its second derivative vanish at $x=d.$ Therefore, using the even symmetry, we can define a finite-dimensional trace operator $ \mathcal{T}^N_{4,e} : X^l_e \to \R^2$ as
\begin{align*}
    \mathcal{T}^N_{4,e}(U) = \begin{pmatrix}
        (\mathcal{T}_{4,e}^N(U))_{0}\\
        (\mathcal{T}_{4,e}^N(U))_{2}
    \end{pmatrix},
\end{align*}
where
\[
(\mathcal{T}_{4,e}^N(U))_{0} \bydef u_0 +  \sqrt{2} \displaystyle\sum_{p \in I^N_e}(-1)^pu_{p} ~~\text{ and }~~  (\mathcal{T}_{4,e}^N(U))_{2} \bydef \sqrt{2} \displaystyle\sum_{p \in I^N_e}(-1)^p\left(\frac{\pi p}{d}\right)^2u_{p} 
\]
for all $U \in X^l$. In particular, if $U = \pi^N_e U$ and $U \in \text{Ker}\left(
    \mathcal{T}^N_{4,e}\right)$, then $U$ has a function representation on $\om$ which is even and  in $H^4_0(\om)$. Using Theorem \ref{th:trace_theorem} and \eqref{eq : projection in X^k_0}, we  project $\tilde{U}_0$ in the kernel of $\mathcal{T}_{4,e}^N$ and denote $U_0$ the projection. In particular, using the notations of the theorem, we use $M = \mathbb{M}^*$ and $D = \pi^N_e L^{-1}_e\pi^N_e$.
The computations in \eqref{eq : projection in X^k_0} are achieved using IntervalArithmetic on Julia \cite{julia_interval} and the code is available at \cite{julia_cadiot}. In particular, $U_0$ has a representation $u_0 \bydef \gamma^\dagger_e(U_0) \in H^l_e$ such that supp$(u_0) \subset \overline{\Omega_0}.$

Using the approach of Section \ref{ssec : bounds pde case}, we were able to obtain the following bounds
\begin{align*}
    \|D\mathbb{F}_e(u_0)^{-1}\|_{2,l} &\leq 4.4\\
    \mathcal{Y}_0 &\leq 2.26 \times 10^{-14}.
\end{align*}
Moreover, using Theorem \ref{th: radii polynomial}, there exists a unique solution in some ball of $H^l_e$ around $u_0$.  The representation of $u_0$ is given in Figure~\ref{fig:solution} below.
%\begin{figure}[H]
%\centering
%\begin{minipage}[H]{0.9\linewidth}
%  \centering\epsfig{figure=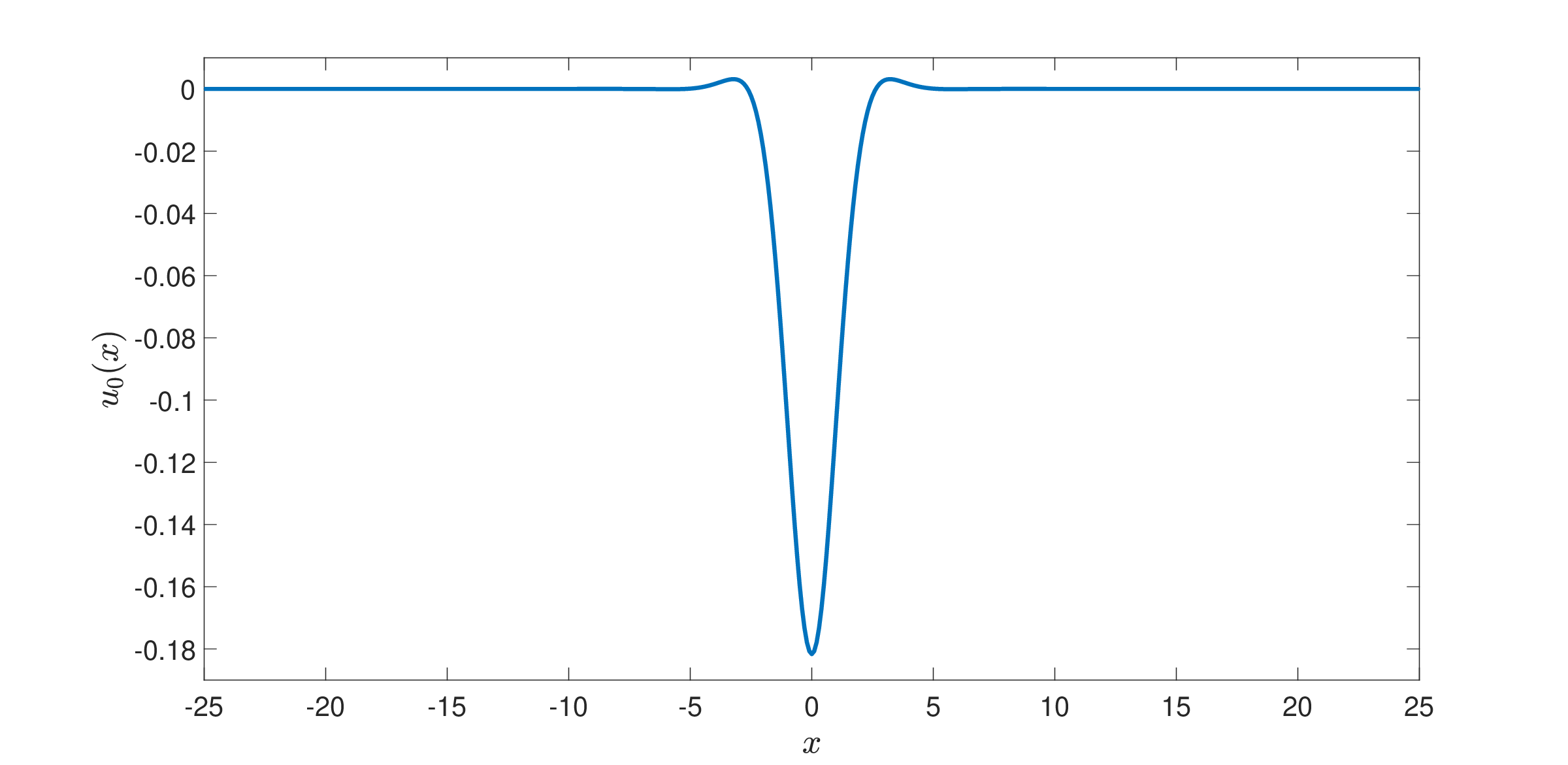,width=
%  \linewidth}
%  \vspace{-.2cm}
%  \caption{Numerical approximation $u_0$ on $\Omega_0$.}
%\end{minipage} 
% \end{figure}

\begin{figure}[h!]
\centerline{
\includegraphics[width=0.7\textwidth]{}
}
\vspace{-.4cm}
\caption{Numerical approximation $u_0$ on $\Omega_0$.}
\label{fig:solution}
\end{figure}
\begin{theorem}\label{th:proof_kdv}(Proof of a soliton in the Kawahara equation)
Let $r_0 \bydef 2.27\times 10^{-14}$, then there exists a unique solution $\tilde{u}$ to \eqref{eq:kdv_eq} in $\overline{B_{r_0}(u_0)} \subset H^l_e$ and we have that $\|\tilde{u}-u_0\|_l \leq r_0$. Moreover, the solution is unique in $\overline{B_{0.015}(u_0)}$. Finally, there exists a unique $\tilde{U} \in \overline{B_{\frac{r_0}{\sqrt{|\Omega_0|}}}(U_0)} \subset X^l_e$ such that $\tilde{U}$ is a solution to the periodic equivalent of \eqref{eq:kdv_eq} on $\Omega_0$. 
\end{theorem}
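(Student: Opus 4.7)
The plan is to apply the Newton--Kantorovich Theorem~\ref{th: radii polynomial} with $X = H^l_e$, $Y = L^2_e$, and $\mathbb{F} = \mathbb{F}_e$, where all three bounds $\mathcal{Y}_0$, $\mathcal{Z}_1$, $\mathcal{Z}_2$ are produced by the machinery of Sections~\ref{sec:bound_inverse}--\ref{sec : computer assisted analysis} specialized to the Kawahara equation in Section~\ref{sec:Kawahara_bounds}. The first step is to construct the numerical approximation: starting from a sech${}^2$ initial guess fed to Newton's method, one produces $\tilde{U}_0 \in X^l_e$ with $\tilde{U}_0 = \pi^{N_0}\tilde{U}_0$, and then applies the finite-dimensional trace projection \eqref{eq : projection in X^k_0} of Theorem~\ref{th:trace_theorem} with $k=4$, $M = (\Gamma_4)^*$, and $D = \pi^N L^{-1} \pi^N$ to obtain $U_0 \in X^{N_0,4}_0 \cap X^l_e$, whose zero extension $u_0$ lies in $H^l_e$ with $\operatorname{supp}(u_0) \subset \Omega_0$ thanks to Lemma~\ref{lem:ext_zero}. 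All of this is performed rigorously in interval arithmetic via \cite{julia_interval}.

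Next, one constructs the approximate inverse $\mathbb{A} = \mathbb{L}^{-1}(I_d + \mathbb{B}_{\Omega_0})$, where $B_\Omega$ is the finite matrix numerically approximating the inverse of $I_d + DG(U_0)L^{-1}$ restricted to modes in $I^N$, and $\mathbb{B}_{\Omega_0} = \mathcal{F}^{-1}(B_\Omega)$ via Lemma~\ref{prop : operator link L^2}. Then the three required bounds are assembled:
\begin{itemize}
\item $\mathcal{Y}_0$ via Lemma~\ref{lem : bound Y0}, reducing to a finite $\ell^2$-norm of $(\pi^N + B_\Omega) F(U_0)$ plus a finite tail coming from $(\pi^{N_0}-\pi^N) L U_0 + (\pi^{2N_0}-\pi^N) G(U_0) - \pi_N \Psi$;
\item $\mathcal{Z}_1 = Z_1 + \mathcal{Z}_u$, where $Z_1$ is the matrix-norm bound of Lemma~\ref{lem : bound Z1}, and $\mathcal{Z}_u$ is computed via Theorem~\ref{th : Zu computation} specialized with the explicit $f_0$ and constants $a, C_0, C(d)$ derived in Lemma~\ref{lem:kdv_f};
\item $\mathcal{Z}_2(r)$ via Lemma~\ref{lem : Z_2 bound Kdv}, which is independent of $r$ since $\mathbb{F}_e$ is quadratic, using the explicit constant $\kappa$ from Proposition~\ref{prop: estimates_on_l}.
\end{itemize}
Plugging these interval-valued bounds into the radii polynomial $p(r) = \tfrac{1}{2}\mathcal{Z}_2 r^2 - (1-\mathcal{Z}_1) r + \mathcal{Y}_0$, one checks on the computer that $p(r_0) < 0$ with $r_0 = 2.27\times 10^{-14}$, which by Theorem~\ref{th: radii polynomial} yields a unique zero $\tilde{u} \in \overline{B_{r_0}(u_0)} \subset H^l_e$ and $\|\tilde{u}-u_0\|_l \leq r_0$. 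As a byproduct, since \eqref{condition radii polynomial} forces $\mathcal{Z}_1 < 1$ (cf.\ Remark~\ref{rem : Z1 < 1 remark}), Theorem~\ref{th : approximation of inverse and norm} delivers the stated bound $\|D\mathbb{F}_e(u_0)^{-1}\|_{2,l} \leq \max\{1,\|B\|_2\}/(1-\mathcal{Z}_1) \leq 4.4$. The wider uniqueness radius $0.015$ is obtained by taking the larger root of the quadratic $p(r)$, which remains negative throughout $(r_0, 0.015]$.

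For the periodic statement, one invokes Theorem~\ref{th : radii periodic}: the condition \eqref{condition for Z2 periodic} is automatically verified in this case because $\mathcal{Z}_2(r)$ from Lemma~\ref{lem : Z_2 bound Kdv} is a constant independent of $r$ (the Kawahara nonlinearity is quadratic so $D^2\mathbb{F}_e$ is constant), so $\|A(DF(V)-DF(U_0))\|_l$ is controlled by the same matrix-norm estimate as the continuous case after rescaling, giving $\mathcal{Z}_2(\sqrt{|\Omega_0|}r)\sqrt{|\Omega_0|}r$. Theorem~\ref{th : radii periodic} then furnishes a unique zero $\tilde{U} \in \overline{B_{r_0/\sqrt{|\Omega_0|}}(U_0)} \subset X^l_e$ of the periodic problem. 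The main obstacle in practice is ensuring that $d$ is chosen large enough so that $\mathcal{Z}_u$ is small (since it depends on $e^{-2ad}$ and $e^{-4ad}$), while $N$ and $N_0$ are large enough so that $Z_1$ and $\mathcal{Y}_0$ are correspondingly tiny; once these are balanced, every step is a finite interval-arithmetic computation.
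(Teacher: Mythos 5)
Your construction of $u_0$, $\mathbb{A}$, and the bounds $\mathcal{Y}_0$, $\mathcal{Z}_1=Z_1+\mathcal{Z}_u$, $\mathcal{Z}_2$, as well as the application of Theorem~\ref{th: radii polynomial} and the uniqueness in the larger ball via the second root of the radii polynomial, match the paper's proof, which indeed consists of exactly this interval-arithmetic verification plus an appeal to Theorem~\ref{th : radii periodic} for the periodic statement.

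There is, however, one genuine gap: you claim that condition \eqref{condition for Z2 periodic} is ``automatically verified'' because the nonlinearity is quadratic so that $\mathcal{Z}_2$ is independent of $r$. That is not what the condition requires. Condition \eqref{condition for Z2 periodic} concerns the \emph{discrete} operator $A(DF(V)-DF(U_0))$ on $X^l_e$, and the relevant Lipschitz estimate there is the sequence-space analogue of Lemma~\ref{Banach algebra}: one gets $\|DF_e(U_0)W-DF_e(V)W\|_2 \leq 2\lambda_3 \bigl(\sum_{n\in\mathbb{Z}} 1/l(\tilde n)^2\bigr)^{1/2}\|U_0-V\|_l\|W\|_l$, whose constant is the Riemann-sum quantity $\frac{1}{\sqrt{|\Omega_0|}}\bigl(\sum_{n} 1/l(\tilde n)^2\bigr)^{1/2}$ (after the $\sqrt{|\Omega_0|}$ rescaling), \emph{not} the continuous constant $\|\tfrac{1}{l}\|_2$ that enters $\kappa$ in Lemma~\ref{lem : Z_2 bound Kdv} through Proposition~\ref{prop: estimates_on_l}. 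The Riemann sum only converges to $\|\tfrac{1}{l}\|_2$ as $d\to\infty$ (cf.\ Remark~\ref{rem : riemann sum idea}) and may exceed it for the finite $d$ used in the proof. This is precisely why the paper's proof requires $\kappa$ to be chosen so that \eqref{condition for kappa kdv periodic} holds, i.e.\ $\kappa \geq \max\bigl\{\|\tfrac{1}{l}\|_2,\ \tfrac{1}{\sqrt{|\Omega_0|}}\bigl(\sum_{n}1/l(\tilde n)^2\bigr)^{1/2}\bigr\}$, a condition that is checked numerically, after which $\mathcal{Z}_2(r)\geq 2\kappa\max\{1,\|\pi^N+B_{\Omega_0}\|_2\}$ serves simultaneously for the $\mathbb{R}$-problem and for \eqref{condition for Z2 periodic}. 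The constancy of $\mathcal{Z}_2$ in $r$ plays no role in this; without the extra verification on $\kappa$, the deduction of the periodic solution $\tilde U$ from Theorem~\ref{th : radii periodic} is unjustified.
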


\begin{proof}
The rigorous computations for the needed bounds of Theorem \ref{th: radii polynomial} are obtained via \cite{julia_cadiot}. 
In particular, the proof of $\tilde{u}$ is a direct application of the aforementioned theorem. For the proof of $\tilde{U} \in X^l_e$, we use  Theorem \ref{th : radii periodic}. Let $r>0$ and let $V \in \overline{B_r(U_0)}$, then for all $W \in X^l_e$ we have 
\begin{align*}
    \|DF_e(U_0)W - DF_e(V)W\|_2 = \|2\lambda_{3}(U_0-V)*W\|_2 \leq \max_{\xi \in \mathbb{R}}\frac{1}{l(\xi)}\|2\lambda_{3}(U_0-V)*(LW)\|_2.
\end{align*}
But similarly as the reasoning of Lemma \ref{Banach algebra}, we obtain that
\[
 \|DF_e(U_0)W - DF_e(V)W\|_2  \leq 2\lambda_3 \left( \sum_{n \in \mathbb{Z}}\frac{1}{l(\tilde{n})^2}\right)^{\frac{1}{2}}\|U_0-V\|_l\|W\|_l.
\]
Numerically we made sure to build $\kappa$  such that
\begin{equation}\label{condition for kappa kdv periodic}
    \kappa \geq \max\left\{\left\|\frac{1}{l}\right\|_2, \frac{1}{\sqrt{|\om|}} \left\|\left(\frac{1}{l(\tilde{n})}\right)_{n \in \mathbb{Z}}\right\|_2\right\} 
\end{equation}
and therefore
\begin{align*}
    \|DF_e(U_0)W - DF_e(V)W\|_2 
     \leq 2 \kappa\sqrt{|\Omega_0|}~ \|U_0-V\|_l\|W\|_l.
\end{align*}
Choosing $\mathcal{Z}_2(r) \geq 2 \kappa \max\{1,\|\pi^N + B_\om\|_2\}$, we conclude the proof combining Lemma \ref{lem : Z_2 bound Kdv} and Theorem \ref{th : radii periodic}.
\end{proof}

\begin{remark}\label{rem : riemann sum idea}
    Notice that the condition \eqref{condition for Z2 periodic} in Theorem \ref{th : radii periodic} is transformed into \eqref{condition for kappa kdv periodic} in the previous lemma. However, using a Riemann sum argument, notice that 
    \[
    \frac{1}{\sqrt{|\om|}} \left( \sum_{n \in \mathbb{Z}}\frac{1}{l(\tilde{n})^2}\right)^{\frac{1}{2}} \to \left\|\frac{1}{l}\right\|_2
    \]
    as $d \to \infty$. Consequently, as $d$ increases, choosing $\kappa$ satisfying \eqref{condition for kappa kdv periodic} is less and less constraining. 
\end{remark}

\subsection{First three eigencouples of \boldmath$\mathbb{L}+ D\mathbb{G}(\tilde{u})$\unboldmath}\label{sec : kdv eigen}

 In this section, we return to the original setting where $\mathbb{L}$ and $D\mathbb{G}(\tilde{u})$ are defined on the full $H^l$ without any restriction to even functions. Here, $\tilde{u}$ denotes the soliton obtained in Theorem~\ref{th:proof_kdv}. We first notice that $\mathbb{L} + D\mathbb{G}(\tilde{u})$ only has real eigenvalues by self-adjointness on $L^2$. Therefore we replace $\mathbb{C}$ by $\mathbb{R}$ when using the results of Sections \ref{ssec:isolated} and \ref{sec : approximate inverse constrained}.
First, we compute (see \cite{julia_cadiot}) numerical approximations of the first three  eigencouples of $L + DG(U_0)$ which we denote by $(\nu_1,U_1),(\nu_2, U_2), (\nu_3,U_3)$ with $\nu_1 \leq \nu_2 \leq \nu_3$. Then, for all $1 \leq i \leq 3$, we project $U_i$ into $X^{N_0,k}_0$ and still call $U_i$ the projection. In particular, $U_i$ has a representation $u_i \in H^l_\om$.
We want to use the set up of Section \ref{ssec:isolated} to prove that the above pairs are isolated eigencouples of the linearization. In particular, for each eigencouple we want to build an approximate inverse $\oA$ of $D\overline{\mathbb{F}}(\nu_i,u_i)$ using the construction derived in Section \ref{sec : approximate inverse constrained}.

\subsubsection{Computation of the needed bounds}

As for the proof of the soliton, we need to compute the needed constants introduced in Section \ref{sec:constraigned}. Let us fix some $k \in \{1, 2, 3\}$ and  define 
\[
\mathbb{L}_\nu \bydef \frac{1}{1-\nu_k}(\mathbb{L}-\nu I_d)
\]
for all $\nu \in \mathbb{R}$ and the norm $H^l$ is redefined as explained in Section \ref{sec:constraigned}. This definition yields
$l_{\nu_k} \bydef \frac{l-\nu_k}{1-\nu_k} \geq 1.$
In particular, we define $\lambda_{1,k} \bydef \frac{\lambda_1}{1-\nu_k}$, $\lambda_{2,k}\bydef \frac{\lambda_{2}}{1-\nu_k}$ and $\lambda_{3,k} \bydef \frac{\lambda_{3}}{1-\nu_k}$, so that
\begin{align*}
    \mathbb{L}_{\nu_k} = I_d - \lambda_{1,k} \partial^2_x + \lambda_{2,k} \partial^4_x ~~\text{ and }~~
    \frac{1}{1-\nu_k}D\mathbb{G}(u_0) = 2\lambda_{3,k}u_0
\end{align*}

Recall that $H_1 = \mathbb{R} \times H^l$ and $H_2 = \mathbb{R}\times L^2$ (cf. Section \ref{sec:constraigned}). Then let us introduce the following maps  $\overline{\mathbb{F}}: H_1 \to H_2$ and $\overline{\mathbb{F}}_0: H_1 \to H_2$ 
\begin{align*}
    \overline{\mathbb{F}}(\nu,v) \bydef  \begin{pmatrix}(u_k-v,u_k)_l\\
    \mathbb{L}_\nu v + \frac{1}{1-\nu_k}D\mathbb{G}(\tilde{u})v  \end{pmatrix} ~~ \text{ and } ~~ \overline{\mathbb{F}}_0(\nu,v) \bydef  \begin{pmatrix}(u_k-v,u_k)_l\\
    \mathbb{L}_\nu v + \frac{1}{1-\nu_k}D\mathbb{G}(u_0)v  \end{pmatrix}.
\end{align*}
We search for zeros of $\oF$ and 
we need to define $\overline{\mathbb{F}}_0$ intermediately as explained in Section \ref{ssec:isolated}. Moreover, as $u_0$ and $\tilde{u}$ are close by Theorem \ref{th:proof_kdv}, we  use Lemma \ref{lem : passage_T0_T} to link $\overline{\mathbb{F}}$ and $\overline{\mathbb{F}}_0$.
Moreover,  define the equivalent periodic operator $F : X_1  \to X_2$
\begin{align*}
    F(\nu,U) \bydef  \begin{pmatrix} |\om|(U_k-U,U_k)_l\\
    \frac{1}{1-\nu_k}(L-\nu I_d) U + \frac{1}{1-\nu_k}DG(U_0)U  \end{pmatrix}
\end{align*}
where $U_k$ is the sequence of Fourier coefficients of $\sqrt{|\Omega_0|}u_k$ and $X_1, X_2$ are defined in Section \ref{sec:constraigned}. Finally, in order to apply Theorem \ref{th: radii polynomial},  the passage between $\overline{\mathbb{F}}_0$ and $\overline{\mathbb{F}}$ has to be controlled. To do so, Theorem \ref{th:proof_kdv} provides $\|\mathbb{L}(\tilde{u}-u_0)\|_2 \leq r_0 \bydef 2.27 \times 10^{-14}$ and one can use Lemma \ref{lem : passage_T0_T}. The following Theorem provides the results of the computer-assisted proofs obtained in \cite{julia_cadiot}.
 
\begin{theorem}\label{th:eigen_kdv}
Let $(\nu_1,u_1), (\nu_2,u_2) ,(\nu_3,u_3)$ with $\nu_1 \leq \nu_2 \leq \nu_3$, then for all $1 \leq i \leq 3$, there exists $R_i$ (given in the table below) such that there exists a unique eigencouple $x_i^*$ of $\mathbb{L} + D\mathbb{G}(\tilde{u})$ such that $x_i^* \in \overline{B_{R_i}((\nu_i,u_i))}  \subset H_1 = \mathbb{R}\times H^l$. Moreover, $\nu_1, \nu_2$ and $\nu_3$ are simple eigenvalues.
\end{theorem}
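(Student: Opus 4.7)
The plan is to apply the Newton–Kantorovich Theorem~\ref{th: radii polynomial} to the augmented zero finding problem $\overline{\mathbb{F}} : H_1 \to H_2$ at each candidate $(\nu_i, u_i)$, following the constrained-PDE framework of Section~\ref{sec:constraigned} combined with the passage lemma of Section~\ref{ssec:isolated}. Fix $i \in \{1,2,3\}$. Since $\nu_i$ will be chosen in the resolvent set of $\mathbb{L}$ (this can be checked numerically from~\eqref{eq : condition kdv T and c}), the renormalized linear operator $\mathbb{L}_\nu = (1-\nu_i)^{-1}(\mathbb{L} - \nu I_d)$ satisfies Assumption~\ref{ass:A(1)} at $\nu = \nu_i$, so all the machinery of Section~\ref{sec:constraigned} applies with symbol $l_{\nu_i}$. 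The first step would be to build on the computer a finite Fourier-series matrix $\overline{B}_{\Omega_0}$ such that $\overline{B} \bydef I_d + \overline{B}_{\Omega_0}$ numerically inverts $D\overline{F}(\nu_i/\sqrt{|\Omega_0|},\,U_i)$, and then set $\overline{\mathbb{A}} \bydef \overline{\mathbb{L}}^{-1}\big(I_d + \mathcal{F}^{-1}(\overline{B}_{\Omega_0})\big)$ via Lemma~\ref{prop : operator link L^2 extra equation}.

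Next I would compute the three Newton–Kantorovich bounds for the auxiliary map $\overline{\mathbb{F}}_0$, which replaces $D\mathbb{G}(\tilde{u})$ by $D\mathbb{G}(u_0)$ and therefore admits purely spectral computations since $\operatorname{supp}(u_0) \subset \Omega_0$. The bound $\mathcal{Y}_0$ for $\overline{\mathbb{F}}_0$ comes from Lemma~\ref{bound Y0 constrained case}; the bound $\mathcal{Z}_1 = Z_1 + \mathcal{Z}_u$ comes from Lemma~\ref{lem : approximation of inverse and norm constrained PDE}, where $\mathcal{Z}_u$ is derived with $\mathbb{K}_0 = \mathcal{F}^{-1}(L^{-1})$ exactly as in Section~\ref{sec:Kawahara_bounds} (but now with the symbol $l_{\nu_i}$, the exponential rate $a_i$, and the constant $C_{0,i}$ produced by Lemma~\ref{lem:kdv_f}); and $\mathcal{Z}_2$ is obtained by the mean-value-theorem argument of Lemma~\ref{lem : Z2 bound}. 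Since the second Fréchet derivative of $\overline{\mathbb{F}}_0$ in $(\nu,v)$ is just the constant multiplication by $2\lambda_{3,i}$ (plus a cross-term from differentiating $\mathbb{L}_\nu v$ in $\nu$, which is bounded by a trivial Cauchy–Schwarz), the analog of Lemma~\ref{lem : Z_2 bound Kdv} gives $\mathcal{Z}_2(r) \leq 2\kappa_i \max\{1,\|\pi^N + \overline{B}_{\Omega_0}\|_2\}$, with $\kappa_i$ produced by Proposition~\ref{prop: estimates_on_l} applied to $l_{\nu_i}$.

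The third step is to transfer these bounds from $\overline{\mathbb{F}}_0$ to $\overline{\mathbb{F}}$ using Lemma~\ref{lem : passage_T0_T}. The bound $\mathcal{Y}_0$ receives an additional contribution $\|\overline{\mathbb{A}}\|_{H_2,H_1}\cdot 2\kappa_i r_0 \|u_i\|_l$ and the bound $\mathcal{Z}_1$ an additional contribution $\|\overline{\mathbb{A}}\|_{H_2,H_1}\cdot 2\kappa_i r_0$, where $r_0 = 2.27\times 10^{-14}$ from Theorem~\ref{th:proof_kdv} and the factor $\|D\mathbb{G}(\tilde u) - D\mathbb{G}(u_0)\|_{l,2} \le 2\kappa_i\|\tilde u - u_0\|_l$ comes from~\eqref{eq : banach algebra kdv}. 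Both corrections are tiny. Plugging everything into~\eqref{condition radii polynomial} yields a range of admissible radii, and one picks $R_i$ inside it; Theorem~\ref{th: radii polynomial} then delivers a unique $x_i^* = (\nu_i^*, v_i^*) \in \overline{B_{R_i}(\nu_i, u_i)} \subset H_1$ solving $\overline{\mathbb{F}}(x_i^*) = 0$, i.e.\ an eigencouple of $\mathbb{L} + D\mathbb{G}(\tilde u)$.

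For simplicity of the eigenvalues, I would invoke that whenever~\eqref{condition radii polynomial} holds, Remark~\ref{rem : Z1 < 1 remark} together with Lemma~\ref{lem : approximation of inverse and norm constrained PDE} forces $D\overline{\mathbb{F}}(x_i^*) : H_1 \to H_2$ to have a bounded inverse. For a self-adjoint operator on $L^2$ with normalization $\phi_0 = \mathbb{L} u_i$, invertibility of this augmented Jacobian is equivalent, by the standard bordered-matrix criterion, to $\nu_i^*$ being a geometrically (and hence algebraically) simple eigenvalue. Distinctness of $\nu_1^*, \nu_2^*, \nu_3^*$ would then be verified a posteriori by checking that the enclosures $[\nu_i - R_i, \nu_i + R_i]$ are pairwise disjoint. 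The main obstacle I anticipate is sharpness of $\mathcal{Z}_u$: the constant $C_{0,i}$ of Lemma~\ref{lem:kdv_f} and the decay rate $a_i$ both degrade as $\nu_i$ approaches the essential spectrum $[1,+\infty)$ of $\mathbb{L}$, so the half-width $d$ must be chosen large enough that $e^{-2a_i d}$ compensates; ensuring this in conjunction with a small enough $Z_1$ (which requires enough Fourier modes $N$) is the quantitative balancing act on which the whole proof hinges.
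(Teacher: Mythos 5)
Your proposal follows essentially the same route as the paper: the paper likewise sets up the augmented eigenvalue map $\overline{\mathbb{F}}$ together with the intermediate map $\overline{\mathbb{F}}_0$ built from $u_0$, constructs the approximate inverse via Lemma~\ref{lem : approximation of inverse and norm constrained PDE} with the rescaled symbol $l_{\nu_k}$, transfers the bounds through Lemma~\ref{lem : passage_T0_T} using $r_0$ from Theorem~\ref{th:proof_kdv}, and concludes with Theorem~\ref{th: radii polynomial}, the quantitative verification being delegated to the rigorous computations in \cite{julia_cadiot}, with simplicity following from the uniqueness/invertibility given by the contraction, as you argue. One minor slip: for the eigenvalue problem $D^2\overline{\mathbb{F}}_0$ contains no $2\lambda_{3,i}$ multiplication term (the nonlinearity $D\mathbb{G}$ is frozen at $u_0$, resp.\ $\tilde{u}$), so $\mathcal{Z}_2$ comes solely from the $\nu$--$v$ cross term arising from $\mathbb{L}_\nu v$, which you in any case note is trivially bounded.
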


\begin{table}[h!]
\centering
\begin{tabular}{|c|c|c|}
 \hline
 Eigenvalue& Value& Radius of the ball of contraction ($R_i$)\\
 \hline
 \hline
 $\nu_1$  & -1.145218189524111  & $2.8\times 10^{-11}$\\
 $\nu_2$&   0  & $5.7\times 10^{-11}$  \\
  $\nu_3$ & 0.8408132697971273 & $4.4\times 10^{-10}$\\
 \hline
\end{tabular}
 \caption{First three eigenvalues of $\mathbb{L} + D\mathbb{G}(\tilde{u})$.}\label{table : eigenvalues}
\end{table}

\begin{remark}
It is easy to check that $(\tilde{u})'$ is in the kernel of $\mathbb{L} + D\mathbb{G}(\tilde{u})$. Indeed, one has that
\begin{align*}
    \frac{d}{dx}(\mathbb{L}\tilde{u} + \lambda_{3} (\tilde{u})^2) = \mathbb{L}(\tilde{u})' + 2\lambda_{3} \tilde{u} (\tilde{u})' = (\mathbb{L} + D\mathbb{G}(\tilde{u})) (\tilde{u})' =0
\end{align*}
as $\mathbb{L}\tilde{u} + \lambda_{3} (\tilde{u})^2 =0$. The previous theorem proves that the eigenvalue $0$ is simple, which is not a trivial task in general. Moreover, the fact that $\nu_1, \nu_2$ and $\nu_3$ are simple is a direct conclusion of the contraction mapping theorem.
\end{remark}

The following result helps in proving that $\nu_1, \nu_2 ,\nu_3$ are precisely the first  eigenvalues of $\mathbb{L} + D\mathbb{G}(\tilde{u})$.

\begin{lemma}\label{kdv_neumann}
Let $\nu^* < 1$ and suppose that $\|(\mathbb{L} -\nu^*I_d + D\mathbb{G}(\tilde{u}))^{-1}\|_{2} \leq \mathcal{C}$. Then $\mathbb{L} - \nu I_d + D\mathbb{G}(\tilde{u})$  is invertible for all $\nu \in (\nu^*-\frac{1}{\mathcal{C}},\nu^*+\frac{1}{\mathcal{C}}) \bigcap(-\infty, 1)$ and \begin{align*}
    \|(\mathbb{L} -\nu I_d + D\mathbb{G}(\tilde{u}))^{-1}\|_{2} \leq \frac{\mathcal{C}}{1-\mathcal{C}|\nu -\nu^*|}.
\end{align*}
\end{lemma}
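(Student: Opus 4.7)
The statement is a standard resolvent identity / Neumann series perturbation argument, and there is nothing particularly deep going on; the only care required is to keep the source and target spaces straight, since $T \bydef \mathbb{L} - \nu^* I_d + D\mathbb{G}(\tilde{u})$ maps $H^l \to L^2$ while its inverse maps $L^2 \to H^l$, and the identity $I_d$ appearing in $T + (\nu^* - \nu) I_d$ has to be read as the continuous embedding $H^l \hookrightarrow L^2$.

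The plan is to write the algebraic identity
\[
\mathbb{L} - \nu I_d + D\mathbb{G}(\tilde{u}) \;=\; T + (\nu^* - \nu) I_d \;=\; T\bigl( I_d + (\nu^* - \nu)\, T^{-1} \bigr),
\]
where on the right-hand side $T^{-1}: L^2 \to H^l$ is composed with the embedding $H^l \hookrightarrow L^2$ so that $(\nu^* - \nu)T^{-1}$ acts as a bounded operator on $H^l$. The assumption $\|T^{-1}\|_l \le \mathcal{C}$ then guarantees $\|(\nu^* - \nu) T^{-1}\|_l \le \mathcal{C}|\nu - \nu^*|$, which is strictly less than $1$ precisely when $\nu \in B_{1/\mathcal{C}}(\nu^*)$.

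Under that condition, a Neumann series on $H^l$ produces a bounded inverse of $I_d + (\nu^* - \nu) T^{-1}$ with norm bounded by $\frac{1}{1 - \mathcal{C}|\nu - \nu^*|}$. Composing, we obtain
\[
\bigl(\mathbb{L} - \nu I_d + D\mathbb{G}(\tilde{u})\bigr)^{-1} \;=\; \bigl(I_d + (\nu^* - \nu) T^{-1}\bigr)^{-1}\, T^{-1} \;:\; L^2 \to H^l,
\]
and the submultiplicativity of the operator norm together with $\|T^{-1}\|_l \le \mathcal{C}$ immediately gives the advertised bound $\mathcal{C}/(1 - \mathcal{C}|\nu - \nu^*|)$.

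There is no serious obstacle here; the only thing worth double-checking is that the symbol $\|\cdot\|_l$ in the statement is being used to mean the norm that makes sense for operators of the form $T^{-1}$ with $T : H^l \to L^2$ (namely, composing with the embedding $H^l \hookrightarrow L^2$ so as to view things as acting on $H^l$). Once that convention is fixed, the Neumann series argument is entirely routine and the bound follows in a couple of lines.
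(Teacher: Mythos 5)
Your argument is correct and is exactly the paper's approach: the paper's proof consists of the single line ``direct application of Neumann series,'' and your write-up simply spells out that factorization $T+(\nu^*-\nu)I_d = T\bigl(I_d+(\nu^*-\nu)T^{-1}\bigr)$ and the resulting geometric-series bound. Your remark about interpreting $\|\cdot\|_l$ for the map $T^{-1}:L^2\to H^l$ (composing with the embedding $H^l\hookrightarrow L^2$, which here has norm at most $1$ since $l\geq 1$) matches the notational convention the paper itself uses, so there is no gap.
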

\begin{proof}
The proof is a direct application of Neumann series combined with the fact that $\mathbb{L} -\nu I_d + D\mathbb{G}(\tilde{u})$ is a Fredholm operator for all $\nu < 1$ as $l \geq 1$ ($\lambda_1, \lambda_2 >0$). 
\end{proof}

We numerically use Lemma \ref{kdv_neumann} to prove Theorem \ref{th:no_eig_outside} below. Indeed, using IntervalArithmetics.jl  \cite{julia_interval} and the construction of Theorem \ref{th : approximation of inverse and norm}, we ensure that $\mathbb{L} -\nu I_d + D\mathbb{G}(\tilde{u})$ is non invertible only at isolated values that we found in the previous Table \ref{table : eigenvalues}. The algorithmic details are exposed in \cite{julia_cadiot}.
\begin{theorem}\label{th:no_eig_outside}
The operator $\mathbb{L} + D\mathbb{G}(\tilde{u})$ does not possess eigenvalues smaller than $\nu_1$ nor in $(\nu_i,\nu_{i+1})$ for all $i \in \{1, 2\}$. In particular, all the other eigenvalues are strictly bigger than $\nu_3$.
\end{theorem}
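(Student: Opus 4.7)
The plan is to combine Lemma~\ref{kdv_neumann} with repeated applications of Theorem~\ref{th : approximation of inverse and norm} (adapted via Section~\ref{ssec:isolated} to the operator $\mathbb{L}-\nu I_d + D\mathbb{G}(\tilde{u})$) at a finite collection of sample points, in order to cover each interval of interest by explicit balls of guaranteed invertibility. For any fixed $\nu^\star \in \mathbb{R}$ with $\nu^\star \notin \{\nu_1,\nu_2,\nu_3\}$, we view $\mathbb{L}_{\nu^\star} \bydef \mathbb{L} - \nu^\star I_d$ as the new linear part (its symbol $l(\xi)-\nu^\star$ remains bounded away from $0$ on $\mathbb{R}$ as long as $\nu^\star < 1 = \min l$, which holds for all points of interest), and build an approximate inverse $\mathbb{A}(\nu^\star)$ of $\mathbb{L}_{\nu^\star} + D\mathbb{G}(\tilde u)$ by exactly the construction of Section~\ref{sec:bound_inverse}, using the soliton $\tilde u$ from Theorem~\ref{th:proof_kdv} (together with the passage from $u_0$ to $\tilde u$ via Lemma~\ref{lem : passage_T0_T}). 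A computer-assisted evaluation of the bound $\mathcal{Z}_1(\nu^\star) < 1$ then produces, via Theorem~\ref{th : approximation of inverse and norm}, an explicit constant $\mathcal{C}(\nu^\star)$ with $\|(\mathbb{L}-\nu^\star I_d + D\mathbb{G}(\tilde u))^{-1}\|_{2,l} \leq \mathcal{C}(\nu^\star)$.

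Once such a $\mathcal{C}(\nu^\star)$ is available, Lemma~\ref{kdv_neumann} yields a ball of radius $1/\mathcal{C}(\nu^\star)$ around $\nu^\star$ on which $\mathbb{L}-\nu I_d + D\mathbb{G}(\tilde u)$ remains invertible, hence contains no eigenvalue. The first main step is then to cover the two bounded open intervals $(\nu_1,\nu_2)$ and $(\nu_2,\nu_3)$ by finitely many such balls. Concretely, starting from $\nu_1 + \varepsilon_1$ (with $\varepsilon_1$ slightly larger than $R_1$ so that we stay outside the ball of existence of $\nu_1$ given by Theorem~\ref{th:eigen_kdv}) we iterate: at each step evaluate $\mathcal{C}(\nu^\star)$, move to $\nu^\star + 1/\mathcal{C}(\nu^\star)$, until the next sample point lands within $R_2$ of $\nu_2$; then continue analogously from $\nu_2 + \varepsilon_2$ up to $\nu_3 - R_3$. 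Together with the three balls of Theorem~\ref{th:eigen_kdv}, this produces a rigorous covering of $[\nu_1 - R_1,\nu_3 + R_3]$ on which eigenvalues can only live at $\nu_1, \nu_2, \nu_3$.

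The remaining and main obstacle is the unbounded interval $(-\infty,\nu_1)$. A purely computer-assisted covering would require infinitely many balls, so one must first reduce to a compact range by an a priori argument. The natural route is to exploit the coercivity of $\mathbb{L}-\nu I_d$ for very negative $\nu$: since $l(\xi)-\nu \geq l(\xi) + |\nu| \geq 1+|\nu|$ for $\nu<0$, one has $\|(\mathbb{L}-\nu I_d)^{-1}\|_{2,2} \leq (1+|\nu|)^{-1}$, while by Lemma~\ref{Banach algebra} (applied to the derivative $D\mathbb{G}(\tilde u) = 2\lambda_3 \tilde{\mathbb{u}}$) the operator $D\mathbb{G}(\tilde u)\mathbb{L}^{-1}:L^2\to L^2$ is bounded, say by $M = 2\kappa\|\tilde u\|_l$. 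A Neumann series argument then gives invertibility of $\mathbb{L}-\nu I_d + D\mathbb{G}(\tilde u)$ as soon as $|\nu|$ exceeds an explicit threshold $\nu_\star$ depending only on $M$ and on $\|l/(l-\nu)\|_\infty$. Choosing $\nu_\star$ rigorously and combining with the finite covering of $[\nu_\star,\nu_1-R_1]$ by the same iterative process as above, one obtains no eigenvalue in $(-\infty,\nu_1)$.

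The hard part is the book-keeping of the covering: one needs the successive radii $1/\mathcal{C}(\nu^\star)$ to be bounded below along the iteration so that the procedure terminates in finitely many steps, which amounts to a uniform bound on $\mathcal{C}$ on each compact interval. This follows from continuity of $\nu \mapsto \mathcal{C}(\nu)$ together with the fact that on each of $(\nu_i,\nu_{i+1})$ the inverse is uniformly bounded (no eigenvalues accumulate at an interior point since the essential spectrum of $\mathbb{L}+D\mathbb{G}(\tilde u)$ equals that of $\mathbb{L}$, cf.\ Remark~\ref{rem : essential spectrum of L}, which lies in $[1,\infty)$). All remaining computations are performed with interval arithmetic as in \cite{julia_cadiot}.
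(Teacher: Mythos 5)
Your overall strategy is the same as the paper's: cover $(\nu_1,\nu_2)$, $(\nu_2,\nu_3)$ and a left neighbourhood of $\nu_1$ by finitely many invertibility balls obtained from Lemma~\ref{kdv_neumann} together with rigorous bounds on $\|(\mathbb{L}-\nu^\star I_d+D\mathbb{G}(\tilde u))^{-1}\|_{2,l}$ computed via the construction of Theorem~\ref{th : approximation of inverse and norm} (passing from $u_0$ to $\tilde u$ through Lemma~\ref{lem : passage_T0_T}), handle very negative $\nu$ by an a priori perturbation argument, and use that the essential spectrum of $\mathbb{L}+D\mathbb{G}(\tilde u)$ coincides with that of $\mathbb{L}$, which lies in $[1,\infty)$, so that only finitely many eigenvalues sit below $\nu_3$. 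This is precisely the paper's algorithmic proof.

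There is, however, a genuine gap in your reduction of the unbounded interval $(-\infty,\nu_1)$ to a compact one. You bound $\|D\mathbb{G}(\tilde u)(\mathbb{L}-\nu I_d)^{-1}\|_2$ by $M\,\|l/(l-\nu)\|_\infty$ with $M=\|D\mathbb{G}(\tilde u)\mathbb{L}^{-1}\|_2\le 2\kappa\|\tilde u\|_l$, and claim a threshold $\nu_\star$ depending only on $M$ and $\|l/(l-\nu)\|_\infty$. But $l(\xi)\to\infty$ as $|\xi|\to\infty$, so $\|l/(l-\nu)\|_\infty=\sup_{\xi}l(\xi)/(l(\xi)+|\nu|)=1$ for every $\nu\le 0$; the product $M\|l/(l-\nu)\|_\infty=M$ does not decrease as $\nu\to-\infty$, and unless it happens that $M<1$ (which your argument does not establish) no such threshold exists from these two quantities -- the decay factor $(1+|\nu|)^{-1}$ that you correctly record never actually enters your estimate. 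The step is easily repaired: $D\mathbb{G}(\tilde u)$ is multiplication by $2\lambda_3\tilde u$, a bounded function with $\|\tilde u\|_\infty\le\|\widehat{\tilde u}\|_1\le\|\tfrac1l\|_2\|\tilde u\|_l$ (as in Proposition~\ref{prop : L1 property of J_G} and \eqref{eq : cauchy S lem banach}), hence $\|D\mathbb{G}(\tilde u)(\mathbb{L}-\nu I_d)^{-1}\|_2\le 2\lambda_3\|\tilde u\|_\infty/(1+|\nu|)<1$ for $|\nu|$ large, and then $\mathbb{L}-\nu I_d+D\mathbb{G}(\tilde u)=\bigl(I_d+D\mathbb{G}(\tilde u)(\mathbb{L}-\nu I_d)^{-1}\bigr)(\mathbb{L}-\nu I_d)$ is invertible from $H^l$ to $L^2$. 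This plays the role of the paper's stopping criterion, which compares a coercivity-type bound for $\mathbb{L}-\nu^* I_d$ with $\|D\mathbb{G}(\tilde u)\|_{l,2}$ and invokes monotonicity in $\nu<0$. A minor further remark: termination of the ball covering cannot be deduced from ``continuity of $\nu\mapsto\mathcal{C}(\nu)$'' (the computed bounds are not continuous functions of $\nu$); as in the paper, one simply verifies a posteriori that the finitely many computed radii cover the required compact intervals.
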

\begin{proof}
Using the previous Lemma \ref{kdv_neumann} we ensure, using interval arithmetics in Julia (cf. \cite{julia_interval}) and the construction of Theorem \ref{th : approximation of inverse and norm}, that there is no eigenvalue in $(\nu_i,\nu_{i+1})$ for all $i \in \{1, 2 \}$. Moreover, as $D\mathbb{G}(\tilde{u})$ is relatively compact with $\mathbb{L}$ and as $\mathbb{L}$ is positive, we know that $\mathbb{L} + D\mathbb{G}(\tilde{u})$ only has finitely many negative eigenvalues. Therefore, if we verify that no eigenvalues are smaller than $\nu_1$, then all the other eigenvalues will  be strictly larger than $\nu_3$. We give a description on the algorithmic proof that provides that $\nu_1$ is the smallest eigenvalue. 

First, we prove that $\nu_1$ is an isolated eigenvalue using a contraction argument. Therefore, we obtain that there exists $r>0$ such that $\nu_1$ is the only eigenvalue in $(-r+\nu_1,\nu_1+r)$. Then we consider $\nu^* = \nu_1-r $, and using the construction of Theorem \ref{th : approximation of inverse and norm} (we build an operator $\mathbb{A}$ approximating the inverse of $\mathbb{L} - \nu^* I_d + D\mathbb{G}(\tilde{u})$), we numerically prove that $\mathbb{L} - \nu^* I_d +D\mathbb{G}(\tilde{u})$ is invertible and that the norm of its inverse is bounded by $\mathcal{C}$. Finally Lemma \ref{kdv_neumann} gives that $\mathbb{L} - \nu I_d + D\mathbb{G}(\tilde{u})$ is invertible for $\nu \in (\nu^*-\frac{1}{\mathcal{C}}, \nu^*+\frac{1}{\mathcal{C}})$ (as $\nu_3 <1$). 

Now we can repeat the same reasoning by choosing $\tilde{\nu} = \nu^*-\frac{1}{\mathcal{C}}$. We iterate the previous steps until we obtain $\|\mathbb{L} - \nu^* I_d\|_{l,2} > \|D\mathbb{G}(\tilde{u})\|_{l,2}$ for some $\nu^*<0$. Then, using that $\|\mathbb{L} - \nu I_d\|_{l,2} \geq \|\mathbb{L} - \nu^* I_d\|_{l,2}$ for all $\nu < \nu^*$ (by positivity of $\mathbb{L}$), it yields that $\mathbb{L} - \nu I_d + D\mathbb{G}(\tilde{u})$ is invertible for all $\nu \leq \nu^*$. In the end it allows to prove that there is no eigenvalue smaller that $\nu_1$.\\
Proving that there is no eigenvalue in $(\nu_i,\nu_{i+1})$ ($i \in \{1,2\}$) works out identically.
\end{proof}

\subsection{Proof of stability}\label{sec : stability kawahara}

Determining the stability of traveling waves is in general a non-trivial problem. Indeed, in some cases, a spectral theory is not enough to provide a conclusion. The particular case of the Kawahara equation falls into the class of model equations for one-dimensional long nonlinear waves (cf. \cite{stability_Albert}).   Albert in \cite{stability_Albert} studied the stability of solitary solutions to this specific kind of equations. Other works, such as \cite{Weinstein_existence_dynamic_solitary} and \cite{LEVANDOSKY_stability_analysis}, also derived some criteria for obtaining the stability of the soliton.
In this section we focus on the results of Albert \cite{stability_Albert}.
The author derived a theorem (see Theorem 3.1 in \cite{stability_Albert}) that gives necessary conditions to obtain the (orbital) stability of the soliton. Our goal is to prove that these conditions are satisfied in the case of the soliton proved in Theorem \ref{th:proof_kdv}. We first derive a necessary intermediate result.

\begin{prop}\label{prop : norm inverse kdv}
Let $\mathcal{C} >0$ be such that $\|D\mathbb{F}_e(u_0)^{-1}\|_{2,l} \leq \mathcal{C}$ and let $r_0 >0$ and $\tilde{u} \in H^l$ defined in Theorem $\ref{th:proof_kdv}$. If \[
2\kappa\mathcal{C}r_0 <1,
\]
where $\kappa$ satisfies \eqref{eq : kappa in kdv}, then $D\mathbb{F}_e(\tilde{u})$ is invertible and 
\[
\|D\mathbb{F}_e(\tilde{u})^{-1}\|_{2,l} \leq \mathcal{C}_1, \text{ where } ~\mathcal{C}_1 \bydef \frac{\mathcal{C}}{1 - 2\kappa\mathcal{C}r_0}.
\]
\end{prop}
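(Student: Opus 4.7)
The plan is to use a standard Neumann series perturbation argument, viewing $D\mathbb{F}_e(\tilde{u})$ as a small perturbation of $D\mathbb{F}_e(u_0)$. Writing
\[
D\mathbb{F}_e(\tilde{u}) = D\mathbb{F}_e(u_0)\Big(I_d + D\mathbb{F}_e(u_0)^{-1}\big(D\mathbb{F}_e(\tilde{u}) - D\mathbb{F}_e(u_0)\big)\Big),
\]
the invertibility of $D\mathbb{F}_e(\tilde{u}) : H^l_e \to L^2_e$ will follow from showing that the operator norm on $H^l_e$ of the correction term is strictly less than $1$, and the quantitative bound $\mathcal{C}_1$ will come directly from the Neumann series estimate.

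First I would compute the difference $D\mathbb{F}_e(\tilde{u}) - D\mathbb{F}_e(u_0)$ explicitly. Since $\mathbb{F}_e(u) = \mathbb{L}u + \lambda_3 u^2$, the linear part cancels out, leaving the multiplication operator $2\lambda_3(\tilde{u}-u_0)$. I would then apply the multiplicative estimate \eqref{eq : banach algebra kdv} together with the bound $\|\tilde{u}-u_0\|_l \leq r_0$ from Theorem~\ref{th:proof_kdv} to obtain
\[
\|\big(D\mathbb{F}_e(\tilde{u}) - D\mathbb{F}_e(u_0)\big)v\|_2 = \|2\lambda_3(\tilde{u}-u_0)v\|_2 \leq 2\kappa\|\tilde{u}-u_0\|_l\|v\|_l \leq 2\kappa r_0 \|v\|_l,
\]
so that $\|D\mathbb{F}_e(\tilde{u}) - D\mathbb{F}_e(u_0)\|_{l,2} \leq 2\kappa r_0$.

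Combining this with the hypothesis $\|D\mathbb{F}_e(u_0)^{-1}\|_{2,l} \leq \mathcal{C}$ yields
\[
\|D\mathbb{F}_e(u_0)^{-1}\big(D\mathbb{F}_e(\tilde{u}) - D\mathbb{F}_e(u_0)\big)\|_l \leq 2\kappa \mathcal{C} r_0 < 1,
\]
where strict inequality is the standing assumption of the proposition. A Neumann series argument then guarantees that $I_d + D\mathbb{F}_e(u_0)^{-1}(D\mathbb{F}_e(\tilde{u}) - D\mathbb{F}_e(u_0))$ is invertible as a bounded operator on $H^l_e$, with inverse norm bounded by $(1-2\kappa\mathcal{C} r_0)^{-1}$. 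Consequently $D\mathbb{F}_e(\tilde{u})$ is invertible, and taking operator norms in the factorization above gives
\[
\|D\mathbb{F}_e(\tilde{u})^{-1}\|_{2,l} \leq \frac{\|D\mathbb{F}_e(u_0)^{-1}\|_{2,l}}{1-2\kappa\mathcal{C} r_0} \leq \frac{\mathcal{C}}{1-2\kappa\mathcal{C} r_0} = \mathcal{C}_1.
\]

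There is no serious obstacle here: the argument is essentially a one-line Neumann series perturbation. The only mildly technical ingredient is the multiplicative bound $\|\lambda_3 wv\|_2 \leq \kappa \|w\|_l \|v\|_l$, but this has already been established in \eqref{eq : banach algebra kdv} with an explicit value of $\kappa$ satisfying \eqref{eq : kappa in kdv}, so the estimate transfers directly.
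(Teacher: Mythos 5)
Your proof is correct and follows essentially the same route as the paper: a Neumann series perturbation argument around $D\mathbb{F}_e(u_0)$, with the difference $D\mathbb{F}_e(\tilde{u}) - D\mathbb{F}_e(u_0) = 2\lambda_3(\tilde{u}-u_0)$ bounded in $\|\cdot\|_{l,2}$ by $2\kappa r_0$ via \eqref{eq : banach algebra kdv}. Your write-up is in fact more detailed than the paper's one-line proof, but the content is identical.
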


\begin{proof}
The proof is a simple application of a Neumann series argument combined with the fact that $\|D\mathbb{F}_e(\tilde{u})^{-1}\|_{2,l} \leq \mathcal{C}$ and $\|D\mathbb{F}_e(\tilde{u}) - D\mathbb{F}_e(u_0)\|_{l,2} = \|2\lambda_3 \mathbb{u}_0\|_{l,2} \leq 2 \kappa r_0$ using \eqref{eq : banach algebra kdv}.
\end{proof}

Proposition \ref{prop : norm inverse kdv} ensuring the invertibility of $D\mathbb{F}_e(\tilde{u})$, we can state Theorem 3.1 from \cite{stability_Albert}.

\begin{lemma}{(Theorem 3.1 \cite{stability_Albert})}
Let $\tilde{u} \in H^l$ be a solution to \eqref{eq:kdv_eq} and assume that 

{\em (P1)} $\mathbb{L} + D\mathbb{G}(\tilde{u})$ has a simple negative eigenvalue $\nu_1$\\
{\em (P2)} $\mathbb{L} + D\mathbb{G}(\tilde{u})$ has no negative eigenvalue other than $\nu_1$\\
{\em (P3)} 0 is a simple eigenvalue of $\mathbb{L} + D\mathbb{G}(\tilde{u})$.

Now let $\psi \bydef D\mathbb{F}_e(\tilde{u})^{-1}\tilde{u}$. If 
 \[
\int_{\mathbb{R}} \tilde{u} \psi < 0,
\]
then $\tilde{u}$ is stable.
\end{lemma}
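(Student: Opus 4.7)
The statement is presented as Theorem 3.1 of \cite{stability_Albert}, so the plan is to invoke that result directly rather than reprove it; a short sketch of the underlying argument is included only to highlight which ingredients of the present paper actually feed into its hypotheses. The Kawahara equation \eqref{eq : time dependent kdv} admits two conserved quantities, an $L^2$ mass $M(u)=\tfrac12\int_{\mathbb{R}} u^2$ and a Hamiltonian $E(u)$, and $\tilde u$ is a constrained critical point of $E$ with Lagrange multiplier $c$. A direct computation shows that its second variation at $\tilde u$ is, up to a positive multiplicative scalar, the self-adjoint operator $\mathbb{L}+D\mathbb{G}(\tilde u)$ appearing in the statement.

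The strategy of Albert (essentially a version of the Grillakis--Shatah--Strauss framework) is to use $E+cM$ as a Lyapunov functional in a neighbourhood of the orbit $\{\tilde u(\cdot-x_0):x_0\in\mathbb{R}\}$. Under (P1)--(P3), the only obstructions to coercivity of the second variation are the single negative direction associated to $\nu_1$ and the one-dimensional kernel $\mathbb{R}\tilde u'$ coming from translation invariance. A standard Weinstein--type Lagrange-multiplier computation then shows that the restriction of $\mathbb{L}+D\mathbb{G}(\tilde u)$ to the codimension-one subspace $\{v:(v,\tilde u)_2=0\}$ is positive semi-definite with kernel $\mathbb{R}\tilde u'$ if and only if $(\tilde u,\psi)_2<0$, where $\psi=(\mathbb{L}+D\mathbb{G}(\tilde u))^{-1}\tilde u$ is well defined by Proposition \ref{prop : norm inverse kdv}. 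Orbital stability modulo translations then follows from the classical convexity/continuity argument applied to $E+cM$.

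The substantive work, in the context of this paper, is therefore not in reproving the lemma but in verifying its hypotheses for the soliton produced by Theorem \ref{th:proof_kdv}. Conditions (P1)--(P3) are already furnished by Theorems \ref{th:eigen_kdv} and \ref{th:no_eig_outside}, so the remaining task, and in fact the main practical obstacle, is a rigorous verification of the sign of $(\tilde u,\psi)_2$. The plan for this is to first build an approximate solution $\psi_0\in H^l_e$ of $D\mathbb{F}_e(u_0)\psi=u_0$ on the Fourier side, then use $\|D\mathbb{F}_e(\tilde u)^{-1}\|_{2,l}\leq \mathcal C_1$ from Proposition \ref{prop : norm inverse kdv} together with the a priori enclosure $\|\tilde u-u_0\|_l\leq r_0$ from Theorem \ref{th:proof_kdv} to obtain an interval enclosure of $\psi$ in $H^l$, and finally to evaluate $(\tilde u,\psi)_2$ from its Fourier series representation on $\Omega_0$ using interval arithmetic. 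Once the numerical sign of the inner product is rigorously established, Albert's lemma is applied verbatim and yields the stability of $\tilde u$.
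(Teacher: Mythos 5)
Your overall route is the same as the paper's: the lemma is proved by citing Albert's Theorem 3.1 rather than reproving the Grillakis--Shatah--Strauss--type argument, and your sketch of that machinery is fine as background. However, there is a concrete gap in the citation step itself. Albert's Theorem 3.1 is stated for a specific class of long-wave model equations with fixed sign conventions and, in addition to the spectral hypotheses (P1)--(P3) and the sign condition on $\int_{\mathbb{R}}\tilde u\,\psi$, it carries an extra structural hypothesis (condition (i) in \cite{stability_Albert}, a property of the symbol of the dispersion operator) that is \emph{not} among the assumptions listed in the lemma. Applying the theorem ``verbatim'' therefore requires two verifications that your proposal never makes: first, that the Kawahara equation \eqref{eq : time dependent kdv} can be brought into Albert's normal form --- in the paper this is done by the change of variables $u\to-u$, $y\to-y$ --- and second, that condition (i) holds for the resulting symbol, which the paper obtains from Theorem 2.1(b) of \cite{stability_Albert}. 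Without these checks the invocation of Albert's result is not justified, and this bridging work is in fact the entire content of the paper's proof of the lemma.

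A secondary point: most of your ``plan'' (verifying (P1)--(P3) via Theorems \ref{th:eigen_kdv} and \ref{th:no_eig_outside}, and rigorously enclosing the sign of $(\tilde u,\psi)_2$ via an approximate solve of $D\mathbb{F}_e(u_0)\psi= u_0$ and Proposition \ref{prop : norm inverse kdv}) concerns the \emph{application} of the lemma to the proven soliton, i.e.\ the content of Proposition \ref{prop:stable} and the final stability theorem, not the proof of the lemma itself. That material is consistent with what the paper does later, but it does not substitute for the missing verification that the equation lies in Albert's framework.
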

\begin{proof}
First, notice that after changing $u \to -u$ and $y \to -y$ in \eqref{eq : time dependent kdv}, we fall into the required set-up to use the results from \cite{stability_Albert}.
Then, in order to apply Theorem 3.1 in \cite{stability_Albert}, one needs to satisfy the condition (i) stated in the aforementioned article. However, for the Kawahara equation, one can easily prove that this condition is satisfied using Theorem 2.1 (b) in \cite{stability_Albert}.  
\end{proof}

We proved conditions $(P1), (P2), (P3)$ in Theorem \ref{th:eigen_kdv}. The next proposition gives a numerical method to prove rigorously that $\displaystyle\int_{\mathbb{R}} \tilde{u} \psi < 0$ is satisfied.

\begin{prop}\label{prop:stable}
Let $\tilde{u}$ be the solution obtained in Theorem \ref{th:proof_kdv}. Then, let $V_0 \bydef AU_0,$ where $A$ is defined in Section \ref{ssec : bounds pde case}, and let $V = (V_n)_n \in X^4_0$ be the projection of $V_0$ in $X^4_0$ (using the construction of Theorem \ref{th:trace_theorem}). Finally, we introduce $\epsilon >0$ where
\[
\epsilon \bydef  \mathcal{C}_1|\Omega_0|^{\frac{1}{2}}\| U_0 - (L_e + DG_e(U_0))V\|_2 + 2|\Omega_0|^{\frac{1}{2}}\kappa\mathcal{C}_1 r_0 \|V\|_l.
\]
If there exists $\tau<0$ such that
\[
    |\Omega_0| \sum_{n \in \mathbb{Z}}(U_0)_nV_n + \epsilon |\Omega_0|^{\frac{1}{2}}\|U_0\|_2 + 2\mathcal{C}_1(|\Omega_0|^{\frac{1}{2}}\|U_0\|_2 + r_0)r_0 \leq \tau
\]
then
\[
    \int_{\mathbb{R}} \tilde{u} D\mathbb{F}_e(\tilde{u})^{-1}\tilde{u} < \tau.
\]
\end{prop}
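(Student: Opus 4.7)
The plan is to reduce the integral $\int_{\mathbb{R}} \tilde u\,\psi$ (with $\psi \bydef D\mathbb{F}_e(\tilde u)^{-1}\tilde u$) to the computable quantity $|\Omega_0|\sum_n(U_0)_nV_n$ plus explicitly controllable error terms, and then invoke Albert's criterion via Theorem \ref{th:eigen_kdv}. The function $v$ associated to $V$ plays the role of a rigorous $H^l$--approximation of $\psi$: because $V = P^N_4(AU_0)\in X^{N,4}_0\cap X^l_e$, the construction of Section \ref{ssec : construction of u0 in Hlom} together with Lemma \ref{lem:ext_zero} guarantees that the zero-extension $v$ lies in $H^l_e$ with $\mathrm{supp}(v)\subset\Omega_0$ and $\|v\|_l=|\Omega_0|^{1/2}\|V\|_l$ (applying Parseval to $\mathbb{L}v$, which inherits the support of $v$).

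First, I would compute $\int_{\mathbb{R}} u_0 v$ via Parseval: since $u_0,v$ have support in $\Omega_0$ and are real-valued and even, $\int_{\mathbb{R}} u_0 v = |\Omega_0|\sum_{n\in\mathbb{Z}}(U_0)_nV_n$. Next, I would decompose
\[
\int_{\mathbb{R}}\tilde u\,\psi = \int_{\mathbb{R}} u_0 v + \int_{\mathbb{R}} u_0(\psi - v) + \int_{\mathbb{R}}(\tilde u - u_0)\psi
\]
and estimate the two error integrals by Cauchy--Schwartz, using $\|\cdot\|_2\le\|\cdot\|_l$ (which holds since $l\ge 1$ on $\mathbb{R}$ under \eqref{eq : condition kdv T and c}). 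For the third integral, the bound $\|\tilde u-u_0\|_l\le r_0$ from Theorem \ref{th:proof_kdv} together with $\|\psi\|_l\le\mathcal{C}_1\|\tilde u\|_2\le \mathcal{C}_1(|\Omega_0|^{1/2}\|U_0\|_2+r_0)$ from Proposition \ref{prop : norm inverse kdv} yields $|\int_{\mathbb{R}}(\tilde u-u_0)\psi|\le \mathcal{C}_1 r_0(|\Omega_0|^{1/2}\|U_0\|_2+r_0)$.

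The second integral is the core step. Because $D\mathbb{F}_e(\tilde u)\psi = \tilde u$, I can write
\[
\psi-v = D\mathbb{F}_e(\tilde u)^{-1}\bigl(\tilde u - D\mathbb{F}_e(\tilde u)v\bigr),
\]
and the residual splits as
\[
\tilde u - D\mathbb{F}_e(\tilde u)v \;=\; (\tilde u - u_0)\; +\; \bigl(u_0 - D\mathbb{F}_e(u_0)v\bigr)\; +\; 2\lambda_3(u_0-\tilde u)v.
\]
The middle term has support in $\Omega_0$, so Parseval gives $\|u_0 - D\mathbb{F}_e(u_0)v\|_2 = |\Omega_0|^{1/2}\|U_0-(L_e+DG_e(U_0))V\|_2$; the last term is controlled by \eqref{eq : banach algebra kdv} and the norm equality $\|v\|_l=|\Omega_0|^{1/2}\|V\|_l$ to give $\|2\lambda_3(u_0-\tilde u)v\|_2 \le 2\kappa r_0 |\Omega_0|^{1/2}\|V\|_l$. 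Combining everything with Proposition \ref{prop : norm inverse kdv} produces $\|\psi-v\|_l\le \mathcal{C}_1 r_0 + \epsilon$, and Cauchy--Schwartz against $u_0$ gives $|\int u_0(\psi-v)|\le|\Omega_0|^{1/2}\|U_0\|_2(\mathcal{C}_1 r_0+\epsilon)$.

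Adding the two error bounds to $\int_{\mathbb{R}} u_0 v$ delivers exactly the upper bound asserted in the statement, so the hypothesis on $\tau$ forces $\int_{\mathbb{R}}\tilde u\,\psi<\tau<0$; combined with the spectral properties (P1)--(P3) proved in Theorems \ref{th:eigen_kdv} and \ref{th:no_eig_outside}, Albert's stability criterion then yields orbital stability of $\tilde u$. The main subtlety in the argument is the bookkeeping of supports and norms: $v$ must genuinely belong to $H^l_\Omega_0$ (not merely to $H^l$) for the Parseval identities linking the periodic coefficients $U_0,V$ to the $L^2(\mathbb{R})$ inner products to be valid, which is precisely why $V_0=AU_0$ is projected onto the finite-dimensional trace-free subspace $X^{N,4}_0$ via Theorem \ref{th:trace_theorem} before being converted back to a function.
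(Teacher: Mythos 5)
Your proposal is correct and takes essentially the same route as the paper: reduce $\int_{\mathbb{R}}\tilde u\,D\mathbb{F}_e(\tilde u)^{-1}\tilde u$ to the periodic quantity $|\Omega_0|\sum_n (U_0)_n V_n$ via the trace-free function $v$, control the residual with Parseval, the bound $\mathcal{C}_1$ from Proposition \ref{prop : norm inverse kdv} and the estimate \eqref{eq : banach algebra kdv}, and conclude with Albert's criterion and Theorems \ref{th:eigen_kdv} and \ref{th:no_eig_outside}. The only difference is bookkeeping: you approximate $\psi=D\mathbb{F}_e(\tilde u)^{-1}\tilde u$ directly by $v$ (one cross term, with the discrepancy $D\mathbb{F}_e(\tilde u)^{-1}(\tilde u-u_0)$ absorbed into $\|\psi-v\|_l\le \mathcal{C}_1 r_0+\epsilon$), whereas the paper approximates $D\mathbb{F}_e(\tilde u)^{-1}u_0$ by $v$ and carries two cross terms; your splitting yields the final error $\mathcal{C}_1 r_0^2$ in place of $2\mathcal{C}_1 r_0^2$, so the stated bound follows a fortiori.
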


\begin{proof}
We first notice that,
\begin{align*}
    \int_{\mathbb{R}} \tilde{u} D\mathbb{F}_e(\tilde{u})^{-1}\tilde{u} 
    = \int_{\mathbb{R}} u_0 D\mathbb{F}_e(\tilde{u})^{-1}u_0  + \int_{\mathbb{R}} \tilde{u} D\mathbb{F}_e(\tilde{u})^{-1}(\tilde{u}-u_0)  + \int_{\mathbb{R}}( \tilde{u}-u_0) D\mathbb{F}_e(\tilde{u})^{-1}u_0.
\end{align*}
 By construction, we have that $V \in X^4_0$ and the function representation $\tilde{v} \in H^l_0(\Omega_0)$ of $V$ can be extended to a function $v \in H^l(\mathbb{R}).$ Now, notice that since $l(\xi) \geq 1$ for all $\xi \in \mathbb{R}$, we have
 \[\|D\mathbb{F}_e(\tilde{u})^{-1}\|_{2} \leq \|D\mathbb{F}_e(\tilde{u})^{-1}\|_{2,l}.\]
Therefore,
\begin{align*}
    \|D\mathbb{F}_e(\tilde{u})^{-1}u_0 - v\|_2 = &\|D\mathbb{F}_e(\tilde{u})^{-1} \bigg( u_0 - (\mathbb{L}_e + D\mathbb{G}_e(\tilde{u}))v\bigg)\|_2\\
    &\leq \|D\mathbb{F}_e(\tilde{u})^{-1}\|_{2}\| u_0 - (\mathbb{L}_e + D\mathbb{G}_e(\tilde{u}))v\|_2\\
    &\leq  \mathcal{C}_1 |\Omega_0|^{\frac{1}{2}}\| U_0 - (L_e + DG_e(U_0))V\|_2 + \mathcal{C}_1 \|D\mathbb{G}_e(\tilde{u})v - D\mathbb{G}_e(u_0)v\|_2\\
    &\leq  \mathcal{C}_1|\Omega_0|^{\frac{1}{2}}\| U_0 - (L_e + DG_e(U_0))V\|_2 + 2|\Omega_0|^{\frac{1}{2}} \kappa\mathcal{C}_1 r_0 \|V\|_l
    =  \epsilon,
\end{align*}
where $\mathcal{C}_1$ is defined in Proposition \ref{prop : norm inverse kdv}. 
Then we obtain that
\begin{align*}
    \int_{\mathbb{R}} \tilde{u}(x) \bigg(\mathbb{L}_e + D\mathbb{G}_e(\tilde{u})\bigg)^{-1}\tilde{u}(x) dx
    &\le \int_{\Omega_0} u_0 v + \epsilon \|u_0\|_2 + 2\mathcal{C}_1(\|\tilde{u}\|_2 + r_0)r_0 dx \\
    & = |\Omega_0| \sum_{n \in \mathbb{Z}}(U_0)_nV_n + \epsilon \|u_0\|_2 + 2\mathcal{C}_1(\|u_0\|_2 + r_0)r_0 
\end{align*}
where $\|\tilde{u}-u_0\|_2 \leq \|\tilde{u}-u_0\|_l \leq r_0$ (as $l \geq 1$) and $r_0$ is defined in Theorem \ref{th:proof_kdv}.
\end{proof}

\begin{theorem}
The solution $\tilde{u} \in H^l_e$ obtained in Theorem \ref{th:proof_kdv} is (orbitally) stable.
\end{theorem}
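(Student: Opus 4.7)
The plan is to combine the spectral information already established in Theorem \ref{th:eigen_kdv} and Theorem \ref{th:no_eig_outside} with the numerical criterion from Proposition \ref{prop:stable}, so that Albert's stability theorem (Theorem 3.1 in \cite{stability_Albert}) applies.

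First I would verify the three spectral hypotheses (P1), (P2), (P3). From Theorem \ref{th:eigen_kdv} we obtain that $\nu_1 \approx -1.145$ is a simple eigenvalue of $\mathbb{L} + D\mathbb{G}(\tilde{u})$ enclosed in a small interval strictly below zero, which gives (P1). Again by Theorem \ref{th:eigen_kdv}, the eigenvalue $\nu_2 = 0$ is simple, giving (P3). Finally, Theorem \ref{th:no_eig_outside} ensures that there are no eigenvalues below $\nu_1$ and none strictly between $\nu_1$ and $\nu_2 = 0$, so $\nu_1$ is the only negative eigenvalue of $\mathbb{L} + D\mathbb{G}(\tilde{u})$, which is precisely (P2).

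Next, I would invoke Proposition \ref{prop:stable} to control the sign of $\int_{\mathbb{R}} \tilde{u}\, D\mathbb{F}_e(\tilde{u})^{-1}\tilde{u}$. This requires three computer-assisted verifications: (i) using Theorem \ref{th:proof_kdv} and the bound $\|D\mathbb{F}_e(u_0)^{-1}\|_{2,l} \leq 4.4$ obtained in Section \ref{sec ; numerical results}, check $2\kappa \mathcal{C} r_0 < 1$ so that Proposition \ref{prop : norm inverse kdv} produces the constant $\mathcal{C}_1$; (ii) construct the vector $V \in X^4_0$ as the projection of $V_0 = AU_0$ and rigorously evaluate $\epsilon$ using interval arithmetic via \cite{julia_interval}; (iii) rigorously compute the finite sum $|\Omega_0|\sum_n (U_0)_n V_n$ (which is a quadratic form on finitely supported Fourier coefficients by construction) and verify that
\begin{equation*}
    |\Omega_0|\sum_{n\in\mathbb{Z}}(U_0)_n V_n + \epsilon |\Omega_0|^{\frac{1}{2}}\|U_0\|_2 + 2\mathcal{C}_1\bigl(|\Omega_0|^{\frac{1}{2}}\|U_0\|_2 + r_0\bigr)r_0 \leq \tau
\end{equation*}
for some explicit $\tau < 0$. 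Combining Proposition \ref{prop:stable} with the verification of (P1)--(P3) then yields the orbital stability of $\tilde{u}$ via Albert's theorem.

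The main obstacle is not analytical but computational: the term $|\Omega_0|\sum_n (U_0)_n V_n$ must come out sufficiently negative to dominate the positive error contributions $\epsilon|\Omega_0|^{1/2}\|U_0\|_2$ and $2\mathcal{C}_1(|\Omega_0|^{1/2}\|U_0\|_2 + r_0)r_0$. Since $r_0 \approx 2.27\times 10^{-14}$ and $\epsilon$ is controlled by how well $V$ solves the linear system $(L_e + DG_e(U_0))V = U_0$ (which one can make very small by choosing $V$ via an accurate Newton-step applied to $A U_0$ followed by projection onto $X^4_0$), both error terms can be made negligible. One therefore expects the computer-assisted check to succeed with comfortable margin, and the only genuinely delicate part is ensuring the trace projection used to define $V$ does not degrade the approximation of $D\mathbb{F}_e(u_0)^{-1} U_0$ to the point where $\epsilon$ ceases to be small compared to $|\sum_n (U_0)_n V_n|$.
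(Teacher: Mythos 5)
Your proposal is correct and follows essentially the same route as the paper: the paper's proof consists precisely of verifying the hypotheses of Proposition \ref{prop:stable} by rigorous numerics (with $\tau = -0.048$), the spectral conditions (P1)--(P3) having already been established in Theorems \ref{th:eigen_kdv} and \ref{th:no_eig_outside}, so that Albert's criterion yields orbital stability. Your additional remarks on checking $2\kappa\mathcal{C}r_0<1$ and on the quality of the projected vector $V$ are exactly the computational points the paper delegates to \cite{julia_cadiot}.
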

\begin{proof}
The proof is accomplished using rigorous numerics in \cite{julia_cadiot} and using Proposition \ref{prop:stable}. In particular, we can choose $\tau \bydef -0.048$.
\end{proof}

\section{Conclusion and Future directions} \label{sec:conclusion}

  We presented a new approach to prove strong solutions to semi-linear PDEs in $H^l \subset H^s(\mathbb{R}^m)$ via computer-assisted proofs. More specifically, our main result enables a construction of a rigorous inverse of the linearization operator using Fourier series analysis.

Despite Assumptions~\ref{ass:A(1)} and \ref{ass : LinvG in L1}, the setting is still general enough to be applied to a large class of equations (e.g. KdV equation, Kawahara equation,  Swift-Hohenberg equation, etc). However, the relaxation of these assumptions is a future direction of interest and is currently under investigation. We present some ideas to generalize the current method.

First, note that Assumption \ref{ass : LinvG in L1} is a sufficient condition for $\mathbb{G} : H^l \to L^2$ to be well-defined but it is not necessary. Indeed,  
in the cases for which $\mathbb{G} : H^l \to L^2$ is not well-defined,  one can apply $(I-\Delta)^{\frac{s}{2}}$ to \eqref{eq : f(u)=0 on H^l} (if $\psi$ has enough regularity) for some $s \in \mathbb{N}$ big enough and define $\tilde{l}(\xi) \bydef l(\xi)(1+|\xi|^2)^{\frac{s}{2}}$. If $s$ is sufficiently big, then one can use the usual Banach algebra property for Sobolev spaces (see \cite{adams2003sobolev}) to obtain the well-definedness of $\mathbb{G} : H^{\tilde{l}} \to L^2.$ However, in the current set-up, Assumption \ref{ass : LinvG in L1} is needed for the analysis of Section \ref{sec:bound_inverse} (cf. Lemma \ref{lem : computation of falpha}). Therefore, one could study how the analysis of Section \ref{sec:bound_inverse} could be generalized under the assumption that $\G : H^l \to L^2$ is well-defined and smooth, without necessarily imposing Assumption \ref{ass : LinvG in L1}.

Notice that the method exposed in this paper can be generalized to a larger class of unbounded domains, including half-spaces and unbounded stripes. Indeed, under suitable boundary conditions, the computer-assisted approach and its associated Fourier analysis can be conducted. For instance, on an half-space, one could impose Neumann boundary conditions and a cosine series in the ``halfed" direction. On unbounded stripes, one could impose a periodic boundary condition in the bounded direction. Then, the ``fully" unbounded directions can be treated similarly as the case $\R^m$.

 Another future direction of interest would be the investigation of Assumption~\ref{ass:A(1)} to determine if a weaker condition could be used instead. Indeed, as discussed previously in Remark \ref{rem : essential spectrum of L}, the invertibility of $\mathbb{L}$ is needed in the current set-up in order to ensure invertibility of $D\mathbb{F}(u_0)$.
Moreover, we saw in Section \ref{sec:bound_inverse} that the inverse of $\mathbb{L}$ is  used to build a compact operator $ D\mathbb{G}(u_0)\mathbb{L}^{-1}$. Furthermore, it allows to generate an exponential decay that allows us to build an approximate inverse $\mathbb{A}$ (cf. Theorem \ref{th : approximation of inverse and norm}). Therefore, Assumption $\ref{ass:A(1)}$ is central in the current analysis to build a computer-assisted approach. It would be interesting to see if similar features could be obtained without the invertibility of $\mathbb{L}$ on the whole $H^l$. One way would be to impose restrictions on $H^l$ to obtain a subspace on which $\mathbb{L}$ would be invertible.

Moreover, in the case of  Kawahara equation, it seems that soliton solutions in $H^4(\mathbb{R})$ only exist if $\mathbb{L}$ is invertible (see \cite{shock_wave_hoefer} for instance). This might only be a specific feature of the Kawahara equation but it raises the following question :  in the case of an homogeneous PDE, is there any correlation  between the existence of solutions in $H^s(\mathbb{R}^m)$ and the invertibility of $\mathbb{L}$ ? In particular, it would be of interest to determine if there is a class of equations for which the invertibility of $\mathbb{L}$ is necessary.

% \section{Appendix}

% \begin{lemma}\label{lem : exponential_computations}
% Let $a, d >0$, then let $n_1 \in \mathbb{Z}$ and $x_1,z_1 \in (-d,d)$. We have
% \begin{align*}
% \nonumber
%     &\hspace{-1cm}  \int_{-d}^d e^{-a|y_1-x_1|}e^{-a|y_1-z_1-2dn_1|} dy_1\\
%     = &\begin{cases}
%  (d-sign(n_1)x_1 + \frac{1}{2a})e^{-a|2dn_1 + z_1 -x_1|} - \frac{e^{-a|2d(n_1+1) + z_1 + x_1|}}{2a}  &\text{ if } n_1 \neq 0\\
%   ( |z_1-x_1| + \frac{1}{a})e^{-a|z_1-x_1|} - \frac{e^{-2ad}}{2a}(e^{-a(x_1+z_1)} + e^{a(x_1+z_1)}) ~ &\text{ if } n_1 =0.
% \end{cases}
% \end{align*}

% Similarly,
% \begin{align*}
% \nonumber
%     & \hspace{-1cm} \int_{2dn_1-d}^{2dn_1+d} e^{-a|y_1-x_1|}e^{-a|y_1-z_1-2dn_1|} dy_1\\
%     = &\begin{cases}
%  (d+sign(n_1)z_1 + \frac{1}{2a})e^{-a|2dn_1 + z_1 -x_1|} - \frac{e^{-a|2d(n_1+1)-z_1-x_1)|}}{2a} &\text{ if } n_1 \neq 0\\
%   ( |z_1-x_1| + \frac{1}{a})e^{-a|z_1-x_1|} - \frac{e^{-2ad}}{2a}(e^{-a(x_1+z_1)} + e^{a(x_1+z_1)}) ~ &\text{ if } n_1 =0.
% \end{cases}
% \end{align*}

% Finally, for all $\alpha, \beta \in \mathbb{R}$, we have that
% \begin{align*}
%     \int_{\mathbb{R}} e^{-a|y_1-\alpha|}e^{-a|y_1-\beta|} dy_1 = 
%   ( |\alpha - \beta| + \frac{1}{a})e^{-a|\alpha-\beta|}.
% \end{align*}
% \end{lemma}

%\bibliographystyle{siamplain}
\bibliographystyle{siam}
\bibliography{biblio}

\end{document}